\newtheorem{theorem}{Theorem}[section]
\newtheorem*{thma}{Theorem (Maillard \cite{maillard2013number})}
\newtheorem*{thmb}{Theorem (Harris et al. \cite{harris2006further})}
\newtheorem*{thmc}{Theorem (Flajolet, Corollary VI.1 of \cite{flajolet2009analytic})}
 \newtheorem{lemma}[theorem]{Lemma}
  \newtheorem{proposition}[theorem]{Proposition}
  \newtheorem{corollary}[theorem]{Corollary}
  \title{Number of particles absorbed in a BBM on the extinction event}
   \author{ Pierre-Antoine Corre  \footnote{%
Laboratoire de Probabilités et Modèles Aléatoires, CNRS UMR 7599, Université Pierre et Marie Curie, Paris 6, Case courrier 188, 4 place Jussieu 75252 Paris Cedex 05, email: \texttt{pierre-antoine.corre@upmc.fr}}}
\begin{document}
\definecolor{ffqqqq}{rgb}{1,0,0}
\definecolor{qqqqff}{rgb}{0,0,1}
\definecolor{xdxdff}{rgb}{0.49,0.49,1}
\maketitle
\begin{abstract}
 We consider a branching Brownian motion which starts from $0$ with drift $\mu \in \mathbb{R}$ and we focus on the number $Z_x$ of particles killed at $-x$, where $x>0$. Let us call $\mu_0$ the critical drift such that there is a positive probability of survival if and only if $\mu>-\mu_0$. Maillard \cite{maillard2013number} and Berestycki et al. \cite{berestycki2015branching} have study $Z_x$ in the case $\mu \leq -\mu_0$ and $\mu\geq \mu_0$ respectively. We complete the picture by considering the case where $\mu>-\mu_0$ on the extinction event.  More precisely we study the asymptotic of $q_i(x):=\mathbb{P}\left(Z_x=i,\zeta_x<\infty\right)$. We show that the radius of convergence $R(\mu)$ of the corresponding power series increases as $\mu$ increases, up until $\mu=\mu_c\in [-\mu_0,+\infty]$ after which it is constant. We also give a necessary and sufficient condition for $\mu_c<+\infty$. In addition, finer asymptotics are also obtained, which highlight three different regimes depending on $\mu<\mu_c$, $\mu=\mu_c$ or $\mu>\mu_c$.
\end{abstract}
\section{Introduction and main results}
We consider a branching Brownian motion which starts from $0$ with drift $\mu \in \mathbb{R}$, branching rate $\beta>0$, and reproduction law $L$. Let us recall the definition of such a process: a particle starts from $0$, lives during an exponential $\beta$ random time and moves as a Brownian motion with drift $\mu$. $\mathbb{N}$ denotes as usual the set $\lbrace0,1,2,\cdot\cdot\cdot\rbrace$. When a particle dies, it gives birth to a random number $L\in \mathbb{N}\setminus \lbrace 1 \rbrace$ of independent branching Brownian motions started at the position where it dies.  We denote by $G$ the generating function of $L$, that is 
\begin{equation}\label{defG}
G(s)=\mathbb{E}\left(s^L\right)=\sum_{i=0}^{\infty} p_i s^i,
\end{equation} 
where $p_i=\mathbb{P}\left(L=i\right)$ and we call $R_G$ the radius of convergence of $G$. In this article, we will always assume that:
\begin{equation}
m=\mathbb{E}(L)\in (1,+\infty)\mbox{ and }\mu >-\mu_0\mbox{ or }m=+\infty.
\end{equation}
where $\mu_0=\sqrt{2 \beta (m-1)}$, which corresponds, when $m<\infty$, to the speed of the maximum $M_t$ of a branching Brownian motion without drift in the sense that $ M_t/t \displaystyle \underset{ +\infty}{\rightarrow} \mu_0$ almost surely on the survival event. 

In our model, we kill the particles when they first hit the position $-x,$ $x>0$. We call $\zeta_x$ the extinction time of the process, that is the first time when all particles have been killed. Let us define the extinction probability  
\begin{equation}
Q(x,\mu):=\mathbb{P}\left(\zeta_x < + \infty \right)
\end{equation} 
(or often simply $Q(x)$ when no confusion can arise). We will, throughout this paper, use the classical notation $\left(\mathcal{T},\mathcal{F},(\mathcal{F}_t),\mathbb{P}\right)$ to denote the filtered probability space on which the Branching Brownian motion evolves, see for instance $\cite{hardy2009spine}$ for more details. 

 Our purpose is to study the number $Z_x$ of particles killed at $-x$ for a branching Brownian motion on the extinction event.  We can distinguish 3 different cases according to the drift value when $1<m<+\infty$.
\begin{enumerate} 
\item If $\mu \leq -\mu_0$ there is extinction almost-surely and $Z_x<\infty$ a.s.
\item If $|\mu| < \mu_0$ then the survival probability is non-zero. The number of particles is almost-surely finite if extinction occurs and is almost-surely infinite otherwise.
\item If $\mu \geq \mu_0$  then the survival probability is non-zero. The number of absorbed particles is almost-surely finite, whether extinction occurs or not.
\end{enumerate} 
When $m=+\infty$, we can consider that we are in the second case. The first case has been studied by Maillard \cite{maillard2013number} and the third case by Berestycki et al. \cite{berestycki2015branching}. 
Here, we consider both case 2 and 3 (that is $\mu>-\mu_0$), on the extinction event. We can point out that since $Z_x=\infty$ a.s. on the survival event in case 2, the restriction to the extinction event in this provides a whole description of $Z_x$. 

Initially, in the context of branching random walk with absorption on a barrier, the issue of the total number of particles $Y$ that have lived before extinction on a barrier had been studied by Aldous \cite{Aldous}. He conjectured that there exist $K,b>1$ such that in the critical case (which is the analogue of $\mu=-\mu_0$ for the branching Brownian motion) we have that $\mathbb{P}(Y>n) \sim_{n \rightarrow + \infty} K/n^{b}$ and that in the sub-critical case (which is the analogue of $\mu<-\mu_0$) we have that $\mathbb{E}(Y)<+\infty$ and $\mathbb{E}(Y \log Y)=+\infty$. This problem has been solved by Addario-Berry et al. in \cite{addario2011total} and Aïdekon et al. in \cite{aidekon2013precise}  refined their results. Maillard has given a very precise description of the number $Z_x$ of particles which are killed on the barrier $-x$, $x>0$ when $\mu\leq -\mu_0$ for the branching Brownian motion. More precisely, he showed the following result. Fix $\delta$ the span of $L$, that is the greatest positive integer such that the support of $L-1$ is on $\delta \mathbb{Z}$. Define $\lambda_1:=-\mu+\sqrt{\mu^2-2\beta}$, $\lambda_2:=-\mu-\sqrt{\mu^2-2\beta}$, and $d:=\lambda_1/\lambda_2$. 
\begin{thma}
Assume that $\mathbb{E}\left[L\log^2 L\right]<+\infty$. If $\mu=-\mu_0$, then 
\begin{equation}
\mathbb{P}\left(Z_x>n\right)\underset{n\rightarrow+\infty}{\sim} \frac{\mu_0xe^{\mu_0 x}}{n(\log n)^2}.
\end{equation}
Assume now that $R_G$, the radius of convergence of the generating function of $L$, is greater than $1$. We then have that: 
\begin{itemize}
\item If $\mu=-\mu_0$:
\begin{equation}
\mathbb{P}\left(Z_x=\delta n+1\right)\underset{n\rightarrow+\infty}{\sim} \frac{\mu_0xe^{\mu_0 x}}{\delta n^2(\log n)^2}.
\end{equation}
\item If $\mu<-\mu_0$, there exists $K>0$ such that:
\begin{equation}
\mathbb{P}\left(Z_x=\delta n+1\right)\underset{n\rightarrow+\infty}{\sim} K\frac{e^{\lambda_1 x}-e^{\lambda_2 x}}{n^{d+1}}.
\end{equation}
\end{itemize}
\end{thma}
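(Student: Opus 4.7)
The plan is to study the probability generating function $F_x(s) := \mathbb{E}\bigl[s^{Z_x}\bigr]$ and apply singularity analysis (in the spirit of the Flajolet transfer theorem cited in the excerpt) to translate the singular behavior of $F_x(s)$ near its dominant singularity at $s=1$ into coefficient asymptotics. The starting point is the semilinear ODE that $F_x$ satisfies in $x$. By the Feynman--Kac formula for branching Brownian motion with drift and absorption,
\begin{equation}
\tfrac12\,\partial_x^2 F_x(s) + \mu\,\partial_x F_x(s) + \beta\bigl(G(F_x(s)) - F_x(s)\bigr) = 0,\qquad F_0(s)=s,
\end{equation}
with $F_x(s)\to 0$ as $x\to\infty$ for $s\in[0,1)$ (since $Z_x\to\infty$ a.s.\ when $\mu\le-\mu_0$). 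The branching property additionally yields the semigroup identity $F_{x+y}=F_x\circ F_y$; differentiating in $y$ at $y=0$ and combining with the second-order ODE produces a first-order autonomous ODE for $\phi(s):=\partial_x F_x(s)\vert_{x=0^+}$, namely
\begin{equation}
\tfrac12\phi(s)\phi'(s)+\mu\phi(s)+\beta\bigl(G(s)-s\bigr)=0,
\end{equation}
together with the characteristic representation $x=\int_s^{F_x(s)}dt/\phi(t)$, which reconstructs $F_x$ explicitly from $\phi$.

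The core step is to analyse the local structure of $\phi$, and hence of $F_x$, near $s=1$. Since $\phi(1)=0$ and $G(s)-s\sim(m-1)(s-1)$, the ansatz $\phi(s)=c(1-s)+\cdots$ substituted into the ODE forces $c^2+2\mu c+\mu_0^2=0$, so $c\in\{-\mu\pm\sqrt{\mu^2-\mu_0^2}\}$. In the subcritical regime $\mu<-\mu_0$ the two roots $c_-<c_+$ are distinct and a Briot--Bouquet type analysis yields an expansion
\begin{equation}
\phi(s)=c_-(1-s)+K_0(1-s)^{d}+o\bigl((1-s)^d\bigr),\qquad d=c_+/c_-=\lambda_1/\lambda_2,
\end{equation}
with $K_0$ fixed by the boundary condition at infinity. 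Plugging this into $x=\int_s^{F_x(s)}dt/\phi(t)$ one extracts $F_x(s)=1-e^{-c_- x}(1-s)-C\bigl(e^{\lambda_1 x}-e^{\lambda_2 x}\bigr)(1-s)^d+o\bigl((1-s)^d\bigr)$ for some explicit constant $C$. In the critical regime $\mu=-\mu_0$ the two roots merge, which produces a logarithmic degeneracy and, after a second-order matched-asymptotic expansion of the ODE, a corrected form $\phi(s)\sim\mu_0(1-s)+\kappa(1-s)/\log\bigl(1/(1-s)\bigr)$; integrating along the characteristics then gives $1-F_x(s)\sim\mu_0 x\,e^{\mu_0 x}\,(1-s)/\log^2\bigl(1/(1-s)\bigr)$.

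Finally one invokes the transfer theorem (Flajolet, Corollary VI.1) to pass from these singular expansions to the claimed coefficient asymptotics. Under $R_G>1$, standard complex-analytic bounds on the ODE give analytic continuation of $F_x$ to a $\Delta$-domain around $s=1$, which is precisely the hypothesis of the transfer theorem; extracting the coefficient of $s^{\delta n+1}$ then yields $K\bigl(e^{\lambda_1 x}-e^{\lambda_2 x}\bigr)/n^{d+1}$ in the subcritical case and $\mu_0 x\,e^{\mu_0 x}/\bigl(\delta n^2\log^2 n\bigr)$ in the critical case. To obtain the tail $\mathbb{P}(Z_x>n)\sim\mu_0 x\,e^{\mu_0 x}/(n\log^2 n)$ under only $\mathbb{E}[L\log^2 L]<\infty$ (which does not force analyticity of $G$ past $s=1$), I would instead apply a Karamata/de Haan Tauberian theorem to $H(s):=(1-F_x(s))/(1-s)=\sum_n\mathbb{P}(Z_x>n)\,s^n$: its real-axis asymptotic $H(s)\sim\mu_0 x\,e^{\mu_0 x}/\log^2\bigl(1/(1-s)\bigr)$ is slowly varying, and combined with the monotonicity of $n\mapsto\mathbb{P}(Z_x>n)$ delivers the tail asymptotic.

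The main obstacle is the rigorous identification of the singular expansion of $\phi$, especially in the critical case where the Briot--Bouquet analysis degenerates and one must track the subleading log-coefficient by matched asymptotics; a related difficulty is showing that the free constant $K_0$ from the local expansion is precisely the one selected by the global boundary condition $F_x(s)\to 0$, which requires a stability/uniqueness argument along the trajectory of the ODE. Identifying the constants through the inversion formula so that they match $\mu_0 x\,e^{\mu_0 x}$ and $e^{\lambda_1 x}-e^{\lambda_2 x}$ then becomes a controlled computation of the $x$-dependence, but only after the singular structure of $\phi$ has been fully pinned down.
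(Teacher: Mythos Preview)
This theorem is not proved in the present paper: it is stated in the introduction as a result of Maillard \cite{maillard2013number} and serves only as background for the regime $\mu\le-\mu_0$, which the paper explicitly does \emph{not} treat. There is therefore no ``paper's own proof'' to compare against. The paper's original contributions concern the complementary regime $\mu>-\mu_0$ on the extinction event (Theorems~\ref{thcaractRwrtmu}--\ref{asymptotic}), where the dominant singularity of $f_x$ moves from $s=1$ to $s=R(\mu)>1$.

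That said, your sketch is an accurate outline of Maillard's method, and it is also the template the present paper adapts in Section~4: introduce the infinitesimal generator $a(s)=\partial_x F_x(s)|_{x=0}$ (your $\phi$), use the first-order ODE $a'a=-2\mu a-2\beta(G(s)-s)$ to extract the local singular expansion, and then transfer via Flajolet's Corollary~VI.1 after establishing a $\Delta$-domain. One quibble: your linearisation gives $c^2+2\mu c+\mu_0^2=0$, but substituting $\phi(s)=c(1-s)$ into $\tfrac12\phi\phi'+\mu\phi+\beta(G(s)-s)=0$ with $G(s)-s\sim(m-1)(s-1)$ actually yields $c^2-2\mu c+\mu_0^2=0$, so $c=\mu\pm\sqrt{\mu^2-\mu_0^2}$; be careful with the sign when matching constants to $\lambda_1,\lambda_2$. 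The genuinely hard part you correctly flag---the degenerate critical case $\mu=-\mu_0$ with its logarithmic correction, and the selection of the subleading constant by the global boundary condition---is exactly where Maillard's paper does the real work, and your proposal is a plan rather than a proof at that point.
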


To prove this theorem Maillard introduces the generating function of $Z_x$ defined for $s \in \mathbb{R}^+$ by: 
\begin{equation}
F_x(s)=\mathbb{E}\left(s^{Z_x}\right).\label{defbigF}
\end{equation} 
Since we want to work on the extinction event we will rather work with:
\begin{equation}
f_x(s):=\mathbb{E}\left(s^{Z_x}\mathbf{1}_{\lbrace \zeta_x<\infty \rbrace}\right)=\sum_{i=0}^{\infty} q_i(x)s^i, \quad s\in \mathbb{R}^+,
\end{equation}
where 
\begin{equation} q_i(x)=\mathbb{P}\left(Z_x=i,\zeta_x<\infty\right),
\end{equation}
to prove an analogous theorem. Note that $F_x(s)$ and  $f_x(s)$ coincide in two cases. The first one, dealt with by Maillard, happens when $m<\infty$ and $\mu \leq -\mu_0$ because the process becomes extinct almost surely. The second case happens for $s\in[0,1)$ when $m=+\infty$ or when $m<\infty$ and $|\mu| < \mu_0$, since for this range of $\mu$ the event $\lbrace\zeta_x=\infty\rbrace$ is almost surely equal to $\lbrace Z_x=\infty\rbrace$. The reason for which we chose to consider $f_x$ instead $F_x$ is that, for $s>1$, and $|\mu| < \mu_0$, $F_x(s)$ is infinite (because $\lbrace Z_x=\infty\rbrace$ happens with non-zero probability). Even in the case $\mu \geq \mu_0$, our situation is clearly different from that in \cite{berestycki2015branching}, since we restrict to extinction and only the binary branching mechanism is considered in \cite{berestycki2015branching}. 

As a first step we will focus on the radius of convergence of $f_x$ denoted by $R(\mu)$ and we will show that it depends on $\mu$ but not on $x$, which justifies the notation $R(\mu)$. The quantity $R(\mu)$ gives us a first information on $Z_x$, in particular via the Cauchy-Hadamard Theorem (see for instance \cite{lang2013complex}) which tells us that:
\begin{equation}\label{HCth}
\limsup_{n \rightarrow + \infty} \left| q_n(x)\right|^{\frac{1}{n}}=\frac{1}{R(\mu)}.
\end{equation}
A key tool in the present work is $Q$. It satisfies the KPP travelling wave equation and is its unique solutions under some boundary conditions. This result, which is stated in $\cite{harris2006further}$ in the binary case ($L \equiv 2$), is given in the following theorem. Let $q$ be the probability of extinction without killing on the barrier or equivalently the smallest non-negative fixed point of $G$ (defined in \eqref{defG}).
\begin{thmb}\label{thKPP}

$Q$ is the unique solution in $\mathcal{C}(\mathbb{R}^+,[0,1])$ of the equation:
\begin{equation}\label{KKPx}
\frac{1}{2}y^{\prime \prime}(x)+\mu y^{\prime}(x)+\beta\left(G(y(x))-y(x)\right)=0,
\end{equation}
with boundary conditions:
\begin{equation}\label{KKPxinitial}
y(0)=1, y(\infty)=q,
\end{equation}
when $m=+\infty$ or $\mu>-\mu_0$. There is no such solutions when $m<+\infty$ and $\mu\leq-\mu_0$.
\end{thmb}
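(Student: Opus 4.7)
The plan is to prove both parts in one go by exhibiting a multiplicative martingale that pins any admissible solution down to $Q$ itself.

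First I would verify that $Q$ satisfies \eqref{KKPx}--\eqref{KKPxinitial}. Conditioning on whether the first event is a branching or an absorption at $-x$ yields, via the density of the first exit time of Brownian motion with drift from a half-line, an integral version of the KPP equation for $Q$. Elliptic regularity (the right-hand side $\beta(G(y)-y)$ is analytic in $y$ on $[0,1]$) then upgrades this to a classical $C^2$ solution. The boundary conditions are immediate: $Q(0)=1$ since the ancestor starts exactly on the barrier, and $Q(x)\to q$ as $x\to\infty$ by monotone coupling with the unabsorbed BBM, the event $\{\zeta_x<\infty\}$ decreasing to the extinction event of the unabsorbed process.

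For uniqueness, let $y$ be any classical solution taking values in $[0,1]$ and define
\[
W_t:=\prod_{u\in N_t} y\bigl(x+X_u(t)\bigr),
\]
where $N_t$ is the set of particles alive at time $t$ that have not yet been absorbed. Between jumps, Itô's formula applied along each spine produces the drift $\tfrac12 y''+\mu y'$; at a branching event at position $z$ the factor $y(z)$ is replaced by $y(z)^L$, contributing in expectation the additional term $\beta(G(y)-y)$ per particle; at each absorption time a factor equal to $y(0)=1$ leaves the product, so no jump occurs there. Summing, the infinitesimal drift of $W_t$ is a weighted sum over alive particles of $\tfrac12 y''+\mu y'+\beta(G(y)-y)$, which vanishes because $y$ solves \eqref{KKPx}. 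Thus $W_t$ is a bounded $[0,1]$-valued martingale, convergent almost surely and in $L^1$ to some $W_\infty$ with $\mathbb{E}[W_\infty]=y(x)$.

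The main obstacle is the pathwise identification of $W_\infty$. On $\{\zeta_x<\infty\}$ one has $N_t=\emptyset$ eventually, so $W_\infty=1$ trivially. On $\{\zeta_x=\infty\}$, under the hypothesis $\mu>-\mu_0$ or $m=+\infty$, one must show that $W_\infty=0$ almost surely. The heuristic is that the barrier becomes irrelevant at large times, the non-absorbed population grows to infinity and its particles drift away to $+\infty$, where $y$ is close to $q<1$; an infinite product of factors bounded away from $1$ vanishes. Making this rigorous is the delicate step: one can use a spinal decomposition together with a many-to-one estimate to produce, on survival, enough non-absorbed particles whose positions exceed any fixed level $A$ (so that $y\le q+\varepsilon$ there), combined with the fact that every extra factor strictly below $1$ forces $W_\infty=0$. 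Granted this, $\mathbb{E}[W_\infty]=\mathbb{P}(\zeta_x<\infty)=Q(x)$, giving $y\equiv Q$. For the non-existence statement, the same martingale argument in the regime $m<+\infty$ and $\mu\le-\mu_0$ yields $W_\infty=1$ almost surely since extinction is certain, hence $y(x)=1$ for all $x\ge 0$, which is incompatible with $y(\infty)=q<1$.
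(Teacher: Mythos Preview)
Your product-martingale route is exactly the method of Harris et al.\ \cite{harris2006further}, and the present paper does not give its own proof of this statement but simply cites that reference, so at the level of strategy you are aligned with what the paper invokes.

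The one place your sketch is incomplete is precisely the point the paper isolates. Both your existence step (the claim that $\{\zeta_x<\infty\}$ decreases to the extinction event of the unabsorbed process, giving $Q(\infty)=q$) and your uniqueness step (the identification $W_\infty=0$ a.s.\ on survival) rest on the assertion that, on the survival event, the non-absorbed population escapes to $+\infty$. In \cite{harris2006further} this is obtained from the non-degeneracy of the additive martingale limit, which requires $\mathbb{E}[L\log L]<\infty$. Your proposed remedy via spinal decomposition and many-to-one is the same machinery and inherits the same moment hypothesis. The paper explicitly remarks that Maillard furnishes an alternative argument for the non-triviality of $Q$ in the supercritical case that dispenses with any moment condition on $L$, and it is this input that is needed to cover the full generality of the statement (in particular the case $m=+\infty$, where $L\log L$ certainly fails). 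As written, your proposal does not close this gap.
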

The arguments presented in $\cite{harris2006further}$ work without modification in the general case except one. Indeed, the non-triviality of $Q$ is proved when $\mu > -\mu_0$ by using the convergence of the additive martingale to a non-trivial limit (see for instance \cite{hardy2009spine}). But this convergence requires the condition $\mathbb{E}\left(L\log L\right)<\infty$. Maillard gives a proof of the non-triviality of $Q$ in the supercritical case, without assumptions on $L$, which proves that this theorem is always true. Note finally that $x \mapsto F_x(s)$ and $x \mapsto f_x(s)$ also satisfy $\eqref{KKPx}$, but only $x \mapsto Q(x)=f_{x}(1)$ satisfies the boundary conditions \eqref{KKPxinitial}.

As a solution of  \eqref{KKPx}, we can extend $Q$ to an open interval containing $\mathbb{R}_+^*$ but also to a complex domain. The following result is a reformulation in our setting of two classical theorems (Theorem 3.1 of Chapter II of \cite{coddington1955theory} and Section 12.1 of \cite{ince1927ordinary}) applied to $\eqref{deff}$, which gives such extensions.  We define a neighbourhood of a point by a simply connected open which contains this point. 
\begin{proposition}\label{extension}
There exists a maximal open interval $I$ such that we can extend $Q$ on $I$ as a solution of \eqref{KKPx} and such that  $Q(x) \in (-R_G,R_G), \forall x \in I $. This extension is unique. Let us define $x_l =\inf I$, we further have that if $x_l>-\infty$, then:
\begin{equation} \label{boundxl}
\lim_{x \rightarrow x_l^+} \mid Q(x) \mid=R_G \mbox{ or } \limsup_{x \rightarrow x_l^+} \mid Q^{\prime}(x) \mid=+\infty.
\end{equation}
Moreover for each $x \in I$, $Q$ admit an analytic continuation on a neighbourhood of $x$ (in the complex sense). 
\end{proposition}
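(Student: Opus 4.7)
The plan is to reduce \eqref{KKPx} to a first-order system and apply the classical existence/uniqueness/continuation theory for ODEs with analytic coefficients. Setting $v=y'$, equation \eqref{KKPx} becomes $Y'=F(Y)$ for $Y=(y,v)$ with
\[
F(y,v)=\bigl(v,\,-2\mu v-2\beta(G(y)-y)\bigr).
\]
Since $G$ is a power series, it is real-analytic on $(-R_G,R_G)$ and, more importantly, holomorphic on the disc $\{y\in\mathbb{C}:|y|<R_G\}$; hence $F$ is holomorphic on the open set $\Omega:=\{|y|<R_G\}\times\mathbb{C}\subset\mathbb{C}^2$. On the real axis, $(Q(x),Q'(x))$ is already a solution on $\mathbb{R}_+^*$ taking values in $\Omega\cap\mathbb{R}^2$.

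First I would invoke the Cauchy--Lipschitz continuation theorem (Theorem~3.1 of Chapter~II of \cite{coddington1955theory}): because $F$ is locally Lipschitz on the open set $(-R_G,R_G)\times\mathbb{R}$, the solution $Q$ admits a unique maximal extension to an open interval $I\supset\mathbb{R}_+^*$ along which the trajectory stays inside this set, i.e.\ $Q(I)\subset(-R_G,R_G)$. Uniqueness of the extension follows from the local uniqueness statement of the same theorem applied along $I$.

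Next, to establish the boundary dichotomy \eqref{boundxl} at $x_l=\inf I>-\infty$, I would argue by contradiction. Assume both $\limsup_{x\to x_l^+}|Q(x)|<R_G$ and $\limsup_{x\to x_l^+}|Q'(x)|<+\infty$. Then there exist $\rho<R_G$ and $M<\infty$ such that the trajectory $(Q(x),Q'(x))$ lies in the compact set $K:=[-\rho,\rho]\times[-M,M]$ for $x$ in a right-neighbourhood of $x_l$. Since $F$ is continuous on $K$, the bound on $(Q,Q')$ yields a bound on $Q''$, so $Q$ and $Q'$ are uniformly continuous on $(x_l,x_l+\varepsilon)$ and the limits $Q(x_l^+),Q'(x_l^+)$ exist and lie in the open set on which $F$ is Lipschitz. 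Local existence at this limit point provides a solution on an open interval straddling $x_l$, contradicting the maximality of $I$.

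Finally, the complex-analytic extension at each $x\in I$ is a direct application of the holomorphic ODE theorem (Section~12.1 of \cite{ince1927ordinary}): because $F$ is holomorphic on $\Omega$, the real solution $Y=(Q,Q')$ extends uniquely to a holomorphic solution of $Y'=F(Y)$ on some simply connected complex neighbourhood of $x$, which by the first coordinate gives the analytic continuation of $Q$. I do not expect a serious obstacle, as the proposition only repackages these classical theorems; the one point requiring care is the boundary argument, where I must ensure that a simultaneous bound $|Q|\le\rho<R_G$ and $|Q'|\le M$ forces the trajectory to stay in a compact subset of the domain of $F$ so that the solution can be continued.
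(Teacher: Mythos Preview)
Your proposal is correct and is precisely the approach the paper takes: the paper does not give an independent proof of this proposition but simply states that it is a reformulation of Theorem~3.1, Chapter~II of \cite{coddington1955theory} and Section~12.1 of \cite{ince1927ordinary} applied to the first-order system \eqref{deff}, which is exactly the system $Y'=F(Y)$ you write down. Your unpacking of the continuation/escape-from-compacts argument and of the holomorphic ODE theorem is the standard derivation from these two references.
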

Since the extension described in Proposition \ref{extension} is unique, we will make a slight abuse of notation and write $Q$ to denote this extension. If $x_l>-\infty$ either $Q$ cannot be extended analytically left of $x_l$ or such an extension would exit $(-R_G,R_G)$. \\

Finally, we give the connection between $f_x$ and $Q$. The branching property yields: 
\begin{equation}\label{branchprop}
f_{x+y}(s)=f_y(f_x(s)), \forall (x,y,s) \in {\left(\mathbb{R}^+\right)}^2 \times \mathbb{R}^+,
\end{equation}
where the two sides can possibly be equal to $+\infty$, see Maillard \cite{maillard2013number} for the analogous property for $F$. Now consider $J$ the maximal open interval included in $I$ which contains $(0,+\infty)$ such that $Q$ is decreasing on $J$. The function $Q$ is thus invertible on $J$.  Fix $x_0(\mu):=\inf J$. By another slight abuse of notation, we define $Q(x_0(\mu))$ as the right-limit of $Q$ when $x$ goes to $x_0$. Note that this limit exists because $Q$ is decreasing and bounded on $J$. For $x\in\mathbb{R}^+$, since $f_x(1)=Q(x)$, we can derive from \eqref{branchprop} that:
\begin{equation}\label{linkomegQ}
f_x(s)=Q\left(Q^{-1}(s)+x\right), \forall q < s\leq R(\mu)\wedge Q(x_0(\mu)).
\end{equation}
We choose in the previous equation $q < s\leq R(\mu)\wedge Q(x_0(\mu))$ to ensure that the two terms of the quality are well-defined. Actually, the following description of $R(\mu)$ shows us that we can chose $s\in (q , R(\mu)]$.
\begin{theorem} \label{thcaractRwrtmu} 
Let $\mu\in\mathbb{R}$, 
\begin{equation}
R(\mu)=Q(x_0(\mu),\mu).
\end{equation}
\end{theorem}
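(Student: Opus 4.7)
The plan is to combine the functional identity \eqref{linkomegQ}, $f_x(s) = Q(Q^{-1}(s) + x)$, valid for $s \in (q, R(\mu) \wedge Q(x_0(\mu)))$, with Pringsheim's theorem: since $f_x = \sum_i q_i(x) s^i$ has non-negative coefficients, the point $s = R(\mu)$ is necessarily a singular point of $f_x$, and $R(\mu)$ coincides with the distance from $0$ to the nearest singularity of $f_x$ on the positive real axis. The task is then to identify this nearest singularity with $Q(x_0(\mu))$.

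A key preliminary step is to verify that $Q'$ does not vanish on $J^\circ$. If $Q'(y) = 0$ at some $y \in J^\circ$, then \eqref{KKPx} gives $Q''(y) = -2\beta(G(Q(y)) - Q(y))$. Convexity of $G$ together with $G(q) = q$ and $G(1) = 1$ forces $G(s) \neq s$ on $(q, 1) \cup (1, R_G)$, so $Q''(y) \neq 0$ unless $Q(y) = 1$; in that exceptional case $y = 0$, and iterative differentiation of \eqref{KKPx}, combined with analyticity of $Q$ on $I$, yields $Q \equiv 1$, a contradiction with $Q(\infty) = q < 1$. Hence $Q$ is locally biholomorphic at each $y \in J^\circ$, so $Q^{-1}$ extends analytically to a full complex neighbourhood of each $Q(y)$, $y \in J^\circ$.

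For the lower bound $R(\mu) \geq Q(x_0)$, I argue by contradiction. Supposing $R(\mu) < Q(x_0)$, set $y^* := Q^{-1}(R(\mu)) \in J^\circ$. The preliminary step guarantees that $Q^{-1}$ is analytic at $R(\mu)$ and $Q$ is analytic at $y^* + x \in J^\circ$ (by Proposition \ref{extension}), so the composition $Q(Q^{-1}(\cdot) + x)$ provides an analytic continuation of $f_x$ past $R(\mu)$, contradicting Pringsheim's theorem.

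For the upper bound $R(\mu) \leq Q(x_0)$, I show that $f_x$ is singular at $s = Q(x_0)$, splitting on Proposition \ref{extension}. If $x_0 > x_l$, then $x_0 \in I^\circ$ and the maximality of $J$ forces $Q'(x_0) = 0$; the KPP equation then gives $Q''(x_0) \neq 0$, so $Q^{-1}$ develops a square-root branch point at $Q(x_0)$, and post-composition with $Q(\cdot + x)$ (whose derivative at $x_0 + x \in J^\circ$ is non-zero by the preliminary step) transmits this singularity to $f_x$. If $x_0 = x_l$, then \eqref{boundxl} prevents $Q$ from admitting any analytic extension to a complex neighbourhood of $x_0$; were $f_x$ analytic at $Q(x_0)$, one could locally invert the biholomorphism $Q$ near $x_0 + x$ to recover an analytic extension of $Q^{-1}$, hence of $Q = (Q^{-1})^{-1}$, at $x_0$---a contradiction. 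The main obstacle lies in this last subcase, where the exact nature of the singularity of $Q$ at $x_l$ (branch point, or arising from $G$ ceasing to be analytic at $|y| = R_G$) is a priori unspecified; the local-inversion argument handles it uniformly, relying only on non-extendability of $Q$ past $x_l$ and the non-vanishing of $Q'$ at the shifted point $x_0 + x$.
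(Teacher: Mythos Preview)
Your lower bound $R(\mu)\ge Q(x_0)$ and your treatment of the case $x_0>x_l$ (equivalently $Q'(x_0)=0$, $Q(x_0)<R_G$) are essentially the paper's arguments: Pringsheim plus the blow-up of $f_x'(s)=Q'(Q^{-1}(s)+x)/Q'(Q^{-1}(s))$ as $s\uparrow Q(x_0)$. Your preliminary step is Proposition~\ref{propqprimeinfstrict0}.

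The gap is in the case $x_0=x_l$ (equivalently $Q(x_0)=R_G$). Two points are unjustified. First, \eqref{boundxl} does \emph{not} say that $Q$ admits no analytic continuation to a complex neighbourhood of $x_l$; it says $Q$ cannot be continued \emph{as a solution of \eqref{KKPx} with values in $(-R_G,R_G)$}. If the alternative $|Q(x)|\to R_G$ holds while $Q'$ stays bounded (and \eqref{Qprimeequ} together with $Q'<0$ on $J$ actually forces $Q'(x_0^+)$ to be finite), nothing in \eqref{boundxl} alone prevents an analytic extension of $Q$ through $x_0$. Second, your step ``hence of $Q=(Q^{-1})^{-1}$ at $x_0$'' requires $(Q^{-1})'(R_G)\neq0$, i.e.\ $Q'(x_0^+)\neq\pm\infty$, which you do not check. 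Your argument can be repaired: once you know $Q'(x_0^+)\in(-\infty,0)$ you can invert, obtain an analytic extension $\tilde Q$ of $Q$ past $x_0$, and then the KPP relation $G(\tilde Q)=\tilde Q-\tfrac{1}{2\beta}\tilde Q''-\tfrac{\mu}{\beta}\tilde Q'$ together with local invertibility of $\tilde Q$ forces $G$ itself to extend analytically past $R_G$, contradicting Pringsheim applied to $G$. But this chain of reasoning is absent from your sketch.

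The paper sidesteps all of this: it first proves $R(\mu)\le R_G$ independently (Proposition~\ref{firstboundradius}) by the probabilistic lower bound $q_k(x)\ge\mathbb{P}(D_x=1,\,Z_x=k,\,\zeta_x<\infty)$, computed explicitly to be of order $p_k/k^2$. Then, in the case $Q(x_0)=R_G$, the already-established inequality $Q(x_0)\le R(\mu)$ combined with $R(\mu)\le R_G$ gives equality immediately. This is shorter and avoids the delicate analytic-continuation issues at $x_l$.
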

We will write $x_0$ rather $x_0(\mu)$ when no confusions can arise. With the help of Theorem \ref{thcaractRwrtmu}, we can state the behaviour of $R(\mu)$ with respect to $\mu$. 
\begin{theorem}\label{thbehaviourRmu}
The radius of convergence $R(\mu)$ is a non-decreasing continuous function of $\mu$ such that:
\begin{equation}
\lim_{\mu \rightarrow -\mu_0}  R(\mu)=1
\end{equation}
and
\begin{equation}
\lim_{\mu \rightarrow +\infty}  R(\mu)=R_G.
\end{equation}
\end{theorem}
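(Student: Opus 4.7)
The plan is to use Theorem \ref{thcaractRwrtmu}, which identifies $R(\mu)$ with $Q(x_0(\mu),\mu)$, and to exploit the ODE \eqref{KKPx} together with the extension provided by Proposition \ref{extension}.

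For continuity, I would rewrite \eqref{KKPx} as the first-order system $y'=v$, $v'=-2\mu v-2\beta(G(y)-y)$. Standard continuous dependence on parameters yields joint continuity of $(x,\mu)\mapsto (Q(x,\mu),Q'(x,\mu))$ on the interior of the domain of definition, provided the Cauchy data $(1,Q'(0,\mu))$ varies continuously with $\mu$. The map $\mu\mapsto Q'(0,\mu)$ is continuous by a shooting argument combined with the uniqueness statement in Theorem \ref{thKPP}: the initial slope is the unique value for which the solution converges to $q$ at $+\infty$. Since $Q(x_0)>1$ and $G(u)>u$ for $u>1$ by convexity of $G$, one has $Q''(x_0)=-2\beta(G(Q(x_0))-Q(x_0))<0$, so the implicit function theorem applied to the equation $Q'(x_0(\mu),\mu)=0$ gives continuity of $x_0$, hence of $R$.

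For monotonicity, I would begin with a coupling (or Girsanov) argument showing that $\mu\mapsto Q(x,\mu)$ is non-increasing at fixed $x\ge 0$: a larger rightward drift makes the barrier at $-x$ harder to reach, lowering $\mathbb P(\zeta_x<\infty)$. To transfer this into monotonicity of the maximum value $R(\mu)=Q(x_0(\mu),\mu)$ on the extended domain, one must also track how $x_0(\mu)$ moves. A useful tool is the energy identity obtained by multiplying \eqref{KKPx} by $Q'$ and integrating from $x_0(\mu)$ to $+\infty$:
\begin{equation}
\Phi\bigl(R(\mu)\bigr) \;=\; \mu\int_{x_0(\mu)}^{+\infty} Q'(x,\mu)^2\,dx, \qquad \Phi(y):=\int_q^y \beta\bigl(G(u)-u\bigr)\,du,
\end{equation}
where $\Phi$ is strictly increasing on $[1,R_G)$ by convexity. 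The key step is to show that the right-hand side is non-decreasing in $\mu$; I would attempt this by a direct phase-plane comparison of trajectories for $\mu_1<\mu_2$, using that enlarging $\mu$ shifts the vector field in the half-plane $\{v<0\}$ so that the stable-manifold trajectory crosses $\{v=0\}$ at a larger $y$-coordinate. This is, I expect, the main technical obstacle.

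Finally, for the two limits: as $\mu\downarrow-\mu_0$ the survival probability vanishes, so $Q(\cdot,\mu)\to 1$ locally uniformly on $[0,+\infty)$. By joint continuity the extended solution converges to the constant one $y\equiv 1$ on any compact of its domain, forcing $x_0(\mu)\to 0^-$ and $R(\mu)\to 1$. For $\mu\to+\infty$, Proposition \ref{extension} gives $R(\mu)\le R_G$ for free. For the matching lower bound I would rescale \eqref{KKPx} by $u=\mu x$, obtaining $Q''(u)+2Q'(u)+(2\beta/\mu^2)(G(Q)-Q)=0$; to leading order the backward extension of $Q$ grows like $q+(1-q)e^{-2u}$, and this approximation can only break down when $Q$ approaches $R_G$, since only there does the nonlinear term $G(Q)-Q$ compensate the $1/\mu^2$. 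The delicate point is to make this singular-perturbation reasoning rigorous and uniform in $\mu$; the energy identity together with the joint continuity should exclude that $R(\mu)$ stabilises strictly below $R_G$.
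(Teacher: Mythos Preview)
Your outline has the right architecture (phase-plane comparison, continuous dependence, treating the two limits separately), but several steps are only sketched and at least two contain genuine gaps.

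\textbf{Monotonicity.} Your energy identity $\Phi(R(\mu))=\mu\int_{x_0}^{\infty}(Q')^2$ is correct (when $Q'(x_0)=0$), but it does not bypass the comparison you call ``the main technical obstacle'': to show the right-hand side is non-decreasing in $\mu$ you still need to compare the two trajectories, and at that point the identity adds nothing. The paper carries out the comparison directly, but in a reparametrisation that makes it transparent: it sets $a(s,\mu):=Q'(Q^{-1}(s))$, which satisfies the autonomous first-order equation $a'a=-2\mu a-2\beta(G(s)-s)$ on $(q,R(\mu))$. For $\mu_1<\mu_2$ one shows $a(\cdot,\mu_1)>a(\cdot,\mu_2)$ on $(q,r)$, $r=R(\mu_1)\wedge R(\mu_2)$, by checking the asymptotics at $q$ and then ruling out a first crossing; since $a(R(\mu),\mu)=0$ whenever $R(\mu)<R_G$, this immediately gives $R(\mu_1)\le R(\mu_2)$, with strict inequality when $R(\mu_1)\in(1,R_G)$. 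This is essentially the phase-plane argument you allude to, but the $a$-variable makes it a one-line ODE comparison rather than a geometric picture.

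\textbf{Continuity.} Your implicit-function step assumes $R(\mu)$ is always characterised by $Q'(x_0(\mu),\mu)=0$. This fails once $R(\mu)=R_G$: for $\mu>\mu_c$ one has $Q'(x_0)<0$ and $Q(x_0)=R_G$, so the equation you want to apply the IFT to is simply not satisfied. The paper handles the two regimes separately (flow continuity on $R^{-1}([1,R_G))$, and a direct estimate at $\mu_c$). Also, your ``shooting argument plus uniqueness'' for the continuity of $\mu\mapsto Q'(0,\mu)$ is plausible but not obvious; the paper instead proves continuity of $(Q,Q')$ in $\mu$ for $x\ge 0$ by a probabilistic coupling.

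\textbf{The two limits.} Here your arguments are genuinely incomplete. For $\mu\downarrow-\mu_0$, local uniform convergence of $Q(\cdot,\mu)$ to the constant $1$ on compacts does not by itself force $R(\mu)\to 1$: you would first need $x_0(\mu)$ to stay bounded, which is exactly the point at issue. The paper avoids this by a direct bound in the $a$-variable: from $a'a=-2\mu a-2\beta(G(s)-s)$ and $a\le 0$, $G(s)\ge s$ for $s\ge 1$, one gets $a'(s)\ge -2\mu$, hence $a(s)\ge -2\mu(s-1)+Q'(0,\mu)$, so the first zero of $a$ (namely $R(\mu)$) is at most $1+Q'(0,\mu)/(2\mu)\to 1$. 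For $\mu\to+\infty$, your singular-perturbation heuristics are not a proof. The paper again uses the $a$-equation: integrating $a'a=-2\mu a-2\beta(G-\mathrm{id})$ on $(1,s)$ and using $a\le 0$, $\mu\ge 0$ gives
\[
a^2(s,\mu)\ \ge\ (Q'(0,\mu))^2\ -\ 4\beta\!\int_1^{r}(G(u)-u)\,\mathrm{d}u,
\]
and since $Q'(0,\mu)^2\to\infty$ this forces $a$ to stay bounded away from $0$ up to any $r<R_G$, whence $R(\mu)\ge r$.

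In short, the missing idea is the reparametrisation $a(s,\mu)=Q'(Q^{-1}(s))$; once you work with $a$, monotonicity and both limits become short calculus arguments, while your energy identity and singular-perturbation route leave the hard steps open.
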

In particular, if $R_G=1$ then $R(\mu)=1, \; \forall \mu \in \mathbb{R}$. \\
We want now to have a more accurate result than \eqref{HCth} concerning the asymptotic behaviour of $q_n(x)$ when $n$ tends to infinity. For this, we will first find a good domain (we will say what good means later) on which $f_x$ is analytical. We will next study the behaviour of $f_x$  near $R(\mu)$ (in the real or complex sense, as appropriate). More precisely, the goal is to obtain a classical function equivalent of $f_x$ in a  neighbourhood of $R(\mu)$. If these two conditions are satisfied, we can give an exact equivalent of $q_n(x)$ when $n$ tends to infinity thanks to analytical methods. Fortunately, this is the case when $R(\mu)<R_G$. A natural question is then to know whether $R(\mu)$ reaches $R_G$ for a finite $\mu$. Let us define $\mu_c$ as:
\begin{equation}\label{defmuc}
 \mu_c=\inf \lbrace \mu \in \mathbb{R}, R(\mu)=R_G \rbrace.
 \end{equation}
 In particular, we have $R(\mu)<R_G, \forall \mu \in \mathbb{R}$,  if and only if $\mu_c=+\infty$. The following figure shows what happens when $\mu<\mu_c$ or when $\mu>\mu_c$.\\

\begin{figure}[!h]
\centering
\includegraphics{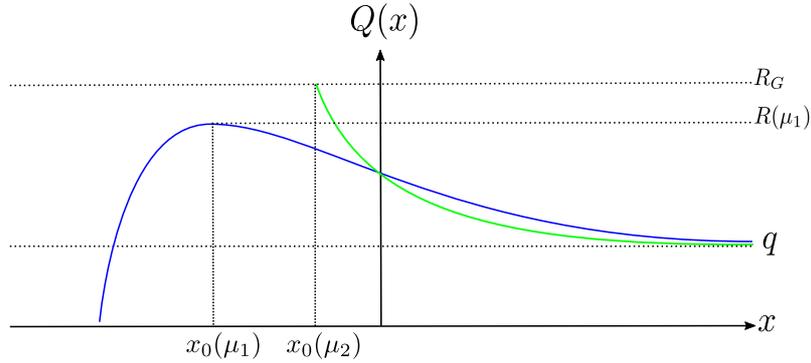}

 \caption{ Let $(\mu_1,\mu_2)\in \mathbb{R}^2$ such that $-\mu_0<\mu_1<\mu_c<\mu_2$. We represent in blue $Q(\cdot,\mu_1)$ and in green $Q(\cdot,\mu_2)$.}
\end{figure}

  The next theorem gives us a criterion to know whether $R_G$ is reached or not for a finite $\mu$.
\begin{theorem}\label{threachornot}
\begin{equation}
\mu_c<\infty \Leftrightarrow \int_{0}^{R_G} G(s) \mbox{d}s<+\infty.
\end{equation}
\end{theorem}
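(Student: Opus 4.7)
The plan is to exploit the energy integral of the KPP equation \eqref{KKPx}. Multiplying \eqref{KKPx} by $Q'$ and integrating from $x$ to $+\infty$ (using $Q(\infty)=q$ and $Q'(\infty)=0$) gives
\begin{equation*}
\frac{(Q'(x))^2}{4} + \beta\bigl(H(Q(x)) - H(q)\bigr) = \mu \int_x^{+\infty} (Q'(t))^2\,dt,
\end{equation*}
with $H(y) := \int_0^y(G(u)-u)\,du$. Setting $y = Q(x)$ and $v(y) := -Q'(Q^{-1}(y))$ for $y \in (q, R(\mu))$, this rewrites in the phase plane as
\begin{equation*}
v(Y)^2 = 4\mu \int_q^Y v(y)\,dy - 4\beta \int_q^Y (G(y) - y)\,dy, \qquad Y \in (q, R(\mu)),
\end{equation*}
which is the key identity driving both directions.

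For $(\Leftarrow)$, assume $\int_0^{R_G} G < +\infty$ and set $C_0 := \int_1^{R_G}(G(y) - y)\,dy < +\infty$. Since $G$ is convex with $G(q)=q$ and $G(1)=1$, one has $G(y) < y$ on $(q,1)$; combined with the phase-plane ODE $vv' = 2\mu v - 2\beta(G-y)$ this yields $v'(y) \ge 2\mu$ where $v>0$, hence $v(y) \ge 2\mu(y-q)$ on $(q,1]$ for $\mu > 0$. Plugging the resulting bound $\int_q^1 v \ge \mu(1-q)^2$ into the energy identity, for every $Y \in [1, R(\mu))$ I obtain
\begin{equation*}
v(Y)^2 \ge 4\mu^2(1-q)^2 - 4\beta C_0,
\end{equation*}
which is strictly positive for $\mu > \sqrt{\beta C_0}/(1-q)$. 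For such $\mu$, $v$ cannot vanish in $[1, R_G)$ and must reach $R_G$, so $R(\mu) = R_G$, giving $\mu_c < +\infty$.

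For $(\Rightarrow)$, I prove the contrapositive. If $R_G = +\infty$, the super-linear growth of $G$ makes $\int_q^Y(G - y)\,dy$ eventually dominate $\int_q^Y v\,dy$ in the energy identity, forcing $v$ to vanish at a finite point; hence $R(\mu) < R_G$. If $R_G < +\infty$ and $\int_0^{R_G} G = +\infty$, monotonicity of $G$ (non-negative coefficients) forces $G(y) \to +\infty$ as $y \to R_G^-$. Assume for contradiction that $R(\mu) = R_G$ for some finite $\mu$: the ODE $v' = 2\mu - 2\beta(G-y)/v$ precludes $v$ tending to a positive limit at $R_G$ (which would force $v' \to -\infty$), so $v(R_G^-) = 0$ and $v$ is bounded on $[q, R_G]$. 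Letting $Y \to R_G^-$ in the energy identity then gives $\mu \int_q^{R_G} v = \beta \int_q^{R_G}(G - y)\,dy = +\infty$, contradicting finiteness of the left-hand side. The main delicate step will be to justify these boundary limits at $R_G$ rigorously, especially in the degenerate case $x_0(\mu) = -\infty$ where the extended solution lives on the whole real line.
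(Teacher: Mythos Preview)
Your approach is essentially the paper's: both directions rest on integrating the phase-plane ODE $vv'=2\mu v-2\beta(G-y)$ (the paper writes it for $a=-v$, equation~\eqref{eqa}). For $(\Leftarrow)$ the paper integrates from $1$ and invokes $(Q'(0,\mu))^2\to\infty$ as $\mu\to\infty$, whereas your bound $v(y)\ge 2\mu(y-q)$ on $(q,1]$ is a clean self-contained substitute that avoids that auxiliary computation. For $(\Rightarrow)$ the paper works in the $x$-variable via the identity $Q'(x)=\bigl(Q'(0)-2\beta\int_0^x e^{2\mu y}(G(Q(y))-Q(y))\,dy\bigr)e^{-2\mu x}$, but this is the same calculation as yours after the change of variable $y=Q(x)$.

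Two points in your sketch need tightening. The claim that ``$v'\to-\infty$ precludes $v\to L>0$'' is not correct as stated; what actually rules it out is that $v'\sim -2\beta G(y)/L$ is \emph{non-integrable} near $R_G$ (precisely because $\int^{R_G}G=+\infty$), so $\int v'=-\infty$ and $v$ cannot stay positive. Before asserting $v(R_G^-)=0$ you should also check $v$ has a limit: since $v'\le 2\mu$ on $(1,R_G)$ the function $v$ is bounded, and then $v'<0$ for $y$ near $R_G$ (as $G(y)\to\infty$) gives eventual monotonicity. The case $R_G=+\infty$ likewise needs the linear bound $v(Y)\le v(1)+2\mu_+(Y-1)$ together with $G(y)\ge p_k y^k$ for some $k\ge2$ to make ``domination'' quantitative. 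Your worry about $x_0(\mu)=-\infty$ is moot: Proposition~\ref{premdist} already excludes it, and your phase-plane argument does not use $x_0$ anyway.
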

Note that the case $R_G=+\infty$ is included in the case $\int_{0}^{R_G} G(s) \mbox{d}s=+\infty$ of Theorem \ref{threachornot}. We can now give an asymptotic equivalent to $q_n(x)$ when $n$ tends to infinity for $\mu<\mu_c$. We recall that $\delta$ is the span of $G$ and $x_0$ is defined above as the position of the local maximum of $Q$ the nearest to 0.
\begin{theorem}\label{asymptotic}
When $\mu<\mu_c$, for $x>0$ we have:
\begin{equation}q_{\delta i +1}(x) \underset{i\rightarrow+\infty}{\sim}  \frac{-Q^{\prime}(x_0(\mu)+x)}{2{R(\mu)}^{\delta i+\frac{1}{2}}\sqrt{\delta\beta(G(R(\mu))-R(\mu))i^3\pi}}.\label{asymptoticcontenu}
\end{equation}
\end{theorem}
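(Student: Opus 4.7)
The plan is to apply Flajolet's singularity analysis (Corollary VI.1 of \cite{flajolet2009analytic}) to $f_x$, using the representation \eqref{linkomegQ}, namely $f_x(s)=Q(Q^{-1}(s)+x)$, to transfer information from $Q$ near $x_0(\mu)$ to $f_x$ near $R(\mu)$. First I would derive the local singular shape of $f_x$. Because $\mu<\mu_c$ we have $R(\mu)<R_G$, so Proposition \ref{extension} guarantees that $x_0$ lies strictly inside $I$ and $Q$ is smooth in a neighbourhood of $x_0$. The definition $x_0=\inf J$ forces $Q'(x_0)=0$, so the KPP equation \eqref{KKPx} gives
\[
Q''(x_0)=-2\beta\bigl(G(R(\mu))-R(\mu)\bigr),
\]
which is strictly negative because $R(\mu)>1$ (a consequence of $\mu>-\mu_0$ and Theorem \ref{thbehaviourRmu}) sits above the fixed point $1$ of $G$, where $G(s)>s$. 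Inverting the local expansion $R(\mu)-Q(x_0+u)=\beta(G(R(\mu))-R(\mu))u^2+O(u^3)$ and substituting into $f_x(s)=Q(Q^{-1}(s)+x)$, then Taylor expanding $Q$ around $x_0+x$ (where $Q'(x_0+x)<0$ because $x_0+x\in J$), would yield the singular expansion
\[
f_x(s)=Q(x_0+x)+\frac{Q'(x_0+x)}{\sqrt{\beta(G(R(\mu))-R(\mu))}}\sqrt{R(\mu)-s}+O(R(\mu)-s).
\]

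Next I would promote this real-asymptotic expansion to one valid on a $\Delta$-domain at $R(\mu)$ and invoke the transfer theorem. Since $Q''(x_0)\neq 0$, the complex-analytic continuation from Proposition \ref{extension} has a quadratic shape at $x_0$, so the relevant branch of $Q^{-1}$ extends to a slit complex neighbourhood of $R(\mu)$, and composition with $Q(\,\cdot\,+x)$ transfers this analyticity to $f_x$. In the aperiodic case $\delta=1$, $R(\mu)$ is the only singularity on the circle of convergence (by Pringsheim), and Corollary VI.1 applied with exponent $1/2$, using $\Gamma(-1/2)=-2\sqrt{\pi}$, produces \eqref{asymptoticcontenu} directly. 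When $\delta>1$ I would write $f_x(s)=s\,h(s^{\delta})$ with $h(t)=\sum_{i\ge 0}q_{\delta i+1}(x)\,t^i$; the substitution $t=s^{\delta}$ turns $\sqrt{R(\mu)-s}$ into $\sqrt{R(\mu)/\delta}\,(1-t/R(\mu)^{\delta})^{1/2}$ to leading order, and combined with the analytic prefactor $1/t^{1/\delta}$ near $t=R(\mu)^{\delta}$, a further application of the transfer theorem recovers the factors $\sqrt{\delta}$ and $R(\mu)^{\delta i+1/2}$ appearing in \eqref{asymptoticcontenu}.

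The main obstacle is the $\Delta$-domain step. Proposition \ref{extension} delivers analyticity on a neighbourhood of every interior point of $I$, but at the boundary point $x_0$ of $J$ one must show that the branch of $Q^{-1}$ exhibits \emph{precisely} a square-root singularity and that $f_x$ (respectively $h$, after deperiodization) admits no other singularities on its circle of convergence. Establishing these two facts — verifying the domain geometry required by Corollary VI.1 and ruling out unwanted boundary singularities — is the technical heart of the argument; once they are in place, the coefficient extraction is routine.
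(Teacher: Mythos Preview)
Your approach is essentially the one the paper takes: exploit \eqref{linkomegQ}, use $Q'(x_0)=0$ and $Q''(x_0)=-2\beta(G(R(\mu))-R(\mu))<0$ to extract a square-root singularity for $f_x$ at $s_0=R(\mu)$, deperiodize via $f_x(s)=s\,h_x(s^\delta)$ when $\delta\ge 2$, and finish with Corollary~VI.1 of \cite{flajolet2009analytic}. Two small remarks. First, the paper applies the transfer theorem to $f_x'$ (resp.\ $h_x'$), reading off $(i+1)q_{i+1}(x)$ and then dividing, rather than to $f_x$ directly; this is purely cosmetic. Second, and more important: your sentence ``$R(\mu)$ is the only singularity on the circle of convergence (by Pringsheim)'' is not right --- Pringsheim only guarantees that $R(\mu)$ \emph{is} a singularity, not that it is the \emph{only} one. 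You yourself flag exactly this point as the main obstacle in your final paragraph, so the proposal is internally aware of the gap; the paper fills it by invoking (an adaptation of) Maillard's Lemma~6.2 \cite{maillard2013number}, recorded here as Lemma~\ref{lem_h}, which shows that $f_x$ (resp.\ $h_x$) is analytic at every point of $\partial D(0,s_0)$ (resp.\ $\partial D(0,s_0^\delta)$) other than $s_0$ (resp.\ $s_0^\delta$). Once that lemma is in hand, a compactness argument on the arc $\{s_0e^{i\varphi}:\varphi\in[\varphi_0,2\pi-\varphi_0]\}$ produces the required $\Delta$-domain, and your computation goes through.
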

When $\mu\geq\mu_c$ we cannot use the same techniques. We will explain why in the last section, but roughly speaking, the reason is that in the general case when $\mu\geq\mu_c$, we cannot extend $f_x$ on a complex domain big enough to apply Flajolet singularity analysis \cite{flajolet2009analytic}, which is the key to Theorem \ref{asymptotic}. Nevertheless, we can obtain some results for no too restrictive hypothesis by applying a classical Tauberian theorem. We present some particular examples in the last section and highlight a change of regime when $\mu=\mu_c$ and when $\mu>\mu_c$.\\

The paper is organised as follows. Section 2 concerns the extinction probability. Some important results are recalled. In Section 3, we will give the main properties of $R$ and prove Theorems $\ref{thbehaviourRmu}$ and $\ref{threachornot}$. Section 4 is devoted to the proof with analytical methods of Theorem \ref{asymptotic}. Finally, in the last section, we will consider the case $\mu \geq \mu_c$.

\section{First results on the extinction probability}
In this section, we give the main properties on $Q$ which will allow us to determine the radius of convergence of $f_x$ in the next section. Since we want to use phase portraits techniques, we consider:
\begin{equation}\label{defX}
X(x,\mu):=\left( Q(x,\mu),Q^{\prime}(x,\mu)\right)\in \mathbb{R}^2.
\end{equation}
 Once again, we will often write $X(x)$ instead of $X(x,\mu)$. We can rewrite the KPP travelling wave equation satisfied by $Q$ as:
\begin{equation}\label{deff}
X^{\prime}=\Gamma(X,\mu),\mbox{ where } \Gamma((x,y),\mu)=(y,-2\mu y-2\beta(G(x)-x)).
\end{equation}

We will need the precise behaviour of $Q$ and $Q^{\prime}$ in the neighbourhood of $+\infty$. In \cite{harris2006further}, Harris et al. give the asymptotic equivalent of $Q$ in the binary case, we will state here a more precise version of this result in the general case. \\

\begin{theorem}\label{thequiv}

If $m=+\infty$ or $m<+\infty$ and $\mu>-\mu_0$, there exists $k>0$ such that:
\begin{equation}\label{equivalent}
Q(x)=q+ke^{-\tilde{\lambda} x} + \underset{x\rightarrow+\infty}{o} (e^{-\tilde{\lambda} x})
\end{equation}
and
\begin{equation}\label{equivalent2}
Q^{\prime}(x) \underset{x\rightarrow+\infty}{\sim}-k\tilde{\lambda} e^{-\tilde{\lambda} x},
\end{equation}
where $\tilde{\lambda}=\sqrt{2\beta(1-G^{\prime}(q))+\mu^2}+\mu$. 
\end{theorem}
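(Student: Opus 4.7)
The strategy is to view equation \eqref{KKPx} as an autonomous planar system (as in \eqref{deff}) with equilibrium $(q,0)$, and to extract the claimed exponential asymptotic from the linearisation about this equilibrium. Set $y=Q-q$; using $G(q)=q$ and writing $G(q+y)-q-y=(G'(q)-1)\,y+h(y)$ with $h(y)=O(y^2)$, equation \eqref{KKPx} becomes
\begin{equation*}
y''+2\mu y'-2\beta(1-G'(q))\,y=-2\beta\,h(y).
\end{equation*}
The characteristic polynomial of the linear part is $\lambda^2+2\mu\lambda-2\beta(1-G'(q))$, with roots $\lambda_\pm=-\mu\pm\sqrt{\mu^2+2\beta(1-G'(q))}$. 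Since $G$ is convex on $[q,1]$ with $G(q)=q$ and $G(1)=1$, the mean value theorem gives $G'(q)<1$ whenever $q<1$, so $\lambda_-<0<\lambda_+$. Thus $(q,0)$ is a hyperbolic saddle whose stable eigenvalue is exactly $\lambda_-=-\tilde{\lambda}$, with eigenvector $(1,-\tilde{\lambda})$.

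Next, I would identify the trajectory $x\mapsto(Q(x),Q'(x))$ as an orbit on the one-dimensional stable manifold $W^s$ of $(q,0)$. Since $Q(x)\to q$ by the boundary condition $Q(\infty)=q$ and $Q'(x)\to 0$ (a short argument using that $Q$ is ultimately monotone and bounded together with \eqref{KKPx}), the trajectory enters every neighbourhood of $(q,0)$, and must therefore lie on $W^s$ for $x$ sufficiently large. By the $C^1$ stable manifold theorem applied to \eqref{deff}, $W^s$ is tangent to $(1,-\tilde{\lambda})$ at $(q,0)$ and can be rectified to reduce the dynamics on it to a scalar equation of the form
\begin{equation*}
\tilde{y}'=-\tilde{\lambda}\,\tilde{y}\,\bigl(1+O(\tilde{y})\bigr),
\end{equation*}
whose nontrivial solutions satisfy $\tilde{y}(x)=Ce^{-\tilde{\lambda}x}(1+o(1))$ for some $C\neq 0$. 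Translating back to the original coordinates yields $Q(x)-q=k\,e^{-\tilde{\lambda}x}+o(e^{-\tilde{\lambda}x})$ and, from the tangency condition, $Q'(x)\sim-k\,\tilde{\lambda}\,e^{-\tilde{\lambda}x}$.

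To see that $k>0$, note first that $Q(x)\ge q$ for every $x\ge 0$ because extinction of the barrier-free BBM is contained in the event $\{\zeta_x<\infty\}$; hence $y\ge 0$ and $k\ge 0$. The value $k=0$ would force $C=0$, i.e.\ the trajectory to coincide with the equilibrium, contradicting $Q(0)=1>q$ and the uniqueness of solutions to \eqref{deff}. Hence $k>0$, completing the proposed argument.

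The main difficulty in this plan is to produce the precise leading constant rather than a two-sided $O(e^{-\tilde{\lambda}x})$ bound: this really requires the stable manifold reduction and is not accessible by soft sub/supersolution arguments alone. A further subtlety, possibly deserving separate treatment, is the case $m=+\infty$, in which $q=1$ and $G'(q)$ is infinite so the linearisation above breaks down and $\tilde{\lambda}$ must be reinterpreted; I would handle it either by truncating $L$ to a sequence of finite-mean approximants and passing to the limit, or by a direct asymptotic analysis of $G(Q)-Q$ as $Q\uparrow 1$ inserted into an integrated form of \eqref{KKPx}.
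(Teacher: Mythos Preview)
Your approach via the stable manifold theorem is correct and is a genuinely different route from the paper's. The paper first treats the case $q=0$ (Lemma~2.4) by combining an a priori exponential upper bound on $Q$ (Lemma~2.3) with the explicit integral representation \eqref{KKPtempsarretx} obtained by stopping at the first branching time; it rewrites \eqref{KKPtempsarretx} as an expression for $Q(x)e^{\lambda x}$, shows each integral term has a finite limit, and then differentiates to get \eqref{equivalent2}. The general $q$ is then reduced to $q=0$ by passing to the conditioned ``prolific'' tree via $\tilde Q=(Q-q)/(1-q)$ and $\tilde G(s)=(G((1-q)s+q)-q)/(1-q)$. Your dynamical-systems argument bypasses both the integral formula and the reduction trick: once you linearise at $(q,0)$ and observe the saddle structure, the stable manifold theorem delivers the asymptotic directly for arbitrary $q$. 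The paper's method, in exchange, is more explicit (it yields $k$ as a concrete integral, see \eqref{temp1}) and requires no black-box theorem.

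Two small points. First, the step ``$Q'(x)\to 0$'' that you flag does need an argument; the energy identity $\tfrac{d}{dx}\bigl[\tfrac12(Q')^2+2\beta\Phi(Q)\bigr]=-2\mu(Q')^2$, with $\Phi$ a primitive of $G-\mathrm{id}$, handles $\mu\ge 0$ immediately, and for $\mu<0$ one can instead use the a priori bound of Lemma~2.3 (or argue via the $\omega$-limit set). Second, your worry about $m=+\infty$ is unfounded: $m>1$ (finite or not) makes the underlying Galton--Watson process supercritical, hence $q<1$, and since $G$ is analytic on $[0,1)$ one has $G'(q)<\infty$; your linearisation applies verbatim. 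The case $m=+\infty$ therefore needs no separate treatment.
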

If $q=0$, $G^{\prime}(q)=p_1=0$ and thus $\lambda:=\sqrt{2\beta+\mu^2}+\mu=\tilde{\lambda}$, which is exactly the result in \cite{harris2006further}.
Note that this result could be refined by showing that $Q$ is a Dirichlet series as it is done for another travelling-wave in \cite{berestycki2015branching}.\\

 The following lemma reformulates the KPP equation in two ways. The first one is obtained by stopping the process at the first branching time. The second one is obtained by multiplying all terms of the KPP travelling-wave equation by $e^{2\mu x}$, and by integrating this equation from $0$ to $x$.
\begin{lemma} \label{thKPPtempsarretx}
Let $\alpha=\sqrt{2\beta+\mu^2}$ and $\lambda=\alpha+\mu$. For $x\geq 0$, we have:
\begin{equation}\label{KKPtempsarretx}
Q(x) = e^{-\lambda x}+\frac{\beta}{\alpha} \int_0^{+\infty}e^{-\mu x}e^{\mu y}G(Q(y))\left[e^{-\alpha |y-x|}-e^{-\alpha (y+x)}\right] \mathrm{d}y.
\end{equation}
Moreover, for $x>x_l$, we have:
\begin{equation}\label{Qprimeequ}
Q^{\prime}(x)=\left(Q^{\prime}(0)-2\beta\int_0^x e^{2 \mu y }\left(G(Q(y))-Q(y)\right)\mathrm{d}y \right)e^{-2 \mu x}.
\end{equation}
\end{lemma}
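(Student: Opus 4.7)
The author has sketched both strategies in the statement of the lemma, and each one reduces to a direct computation.

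For identity \eqref{Qprimeequ}, the plan is to observe that multiplying the KPP equation \eqref{KKPx} by $2e^{2\mu x}$ turns the left-hand side into an exact derivative:
\[
\bigl(e^{2\mu x}Q'(x)\bigr)' = e^{2\mu x}Q''(x) + 2\mu e^{2\mu x}Q'(x) = -2\beta e^{2\mu x}\bigl(G(Q(x)) - Q(x)\bigr).
\]
Integrating from $0$ to $x$ and solving for $Q'(x)$ yields the stated formula. The identity extends from $\mathbb{R}^+$ to $(x_l,+\infty)$ because $Q$ satisfies the KPP equation smoothly on all of $I$ by Proposition \ref{extension}.

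For identity \eqref{KKPtempsarretx}, the plan is to condition on the first branching time $\tau$, which is exponential with parameter $\beta$ and independent of the Brownian motion $(B_t)$ describing the trajectory of the initial particle. Denote by $T_{-x}$ the first hitting time of $-x$ by $B$ (with drift $\mu$). On the event $\{T_{-x} < \tau\}$ the initial particle is absorbed before producing any children, so extinction is certain; this contributes $\mathbb{P}(T_{-x} < \tau) = \mathbb{E}[e^{-\beta T_{-x}}] = e^{-\lambda x}$, by the classical Laplace-transform formula for the hitting time of $-x$ by drifted Brownian motion. On the complementary event $\{\tau < T_{-x}\}$, conditionally on $B_\tau$, the particle branches into $L$ independent BBM's, each of which, being started at height $B_\tau$ above the barrier, becomes extinct with probability $Q(B_\tau + x)$; this contributes $G(Q(B_\tau+x))$ in expectation.

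It then remains to evaluate
\[
\int_0^\infty \beta e^{-\beta t}\,\mathbb{E}\bigl[G(Q(B_t+x))\,\mathbf{1}_{T_{-x}>t}\bigr]\,dt.
\]
Applying a Girsanov transform to remove the drift $\mu$ produces the density $e^{\mu B_t - \mu^2 t/2}$ and turns the time-integral into the potential kernel of driftless Brownian motion on $(-x,+\infty)$, absorbed at $-x$, with uniform killing rate $\tfrac{\alpha^2}{2}=\beta+\tfrac{\mu^2}{2}$. By the standard reflection computation, this kernel at starting point $0$ is
\[
y \longmapsto \frac{1}{\alpha}\bigl(e^{-\alpha|y|}-e^{-\alpha(y+2x)}\bigr),\qquad y>-x,
\]
and the change of variables $u=y+x$ rearranges the expression into the right-hand side of \eqref{KKPtempsarretx}. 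The main technical delicacy is tracking the Girsanov factor and identifying the image term $e^{-\alpha(y+2x)}$ that encodes the absorbing barrier at $-x$; once this Green's function is in place, the proof reduces to a routine calculation.
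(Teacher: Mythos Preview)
Your proof is correct and follows essentially the same approach as the paper: both decompose $Q(x)$ according to whether the first branching time precedes or follows absorption, identify the first contribution as $e^{-\lambda x}$ via the Laplace transform of the hitting time, and obtain \eqref{Qprimeequ} by the integrating-factor trick on the KPP equation. The only cosmetic difference is that the paper reads off the density $\mathbb{P}(K^x_{T_1}\in\mathrm{d}y)$ directly from Borodin--Salminen, whereas you derive the same Green kernel via Girsanov and the image method; the two computations are equivalent.
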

\begin{proof}
We prove \eqref{KKPtempsarretx} only. Let $x \geq 0$. We decompose the event $\lbrace \zeta_x<+\infty \rbrace$ on two sub-events:
\begin{align}
Q(x)&= \mathbb{E}\left(\mathbf{1}_{\lbrace \zeta_x<+\infty\rbrace}\mathbf{1}_{\lbrace T_1\geq\zeta_ x\rbrace} \right)+ \mathbb{E}\left( \mathbf{1}_{\lbrace \zeta_x<+\infty\rbrace}\mathbf{1}_{\lbrace T_1<\zeta_x \rbrace} \right) \nonumber \\
&= \mathbb{P}\left(T_1\geq\zeta_ x\right)+ \mathbb{E}\left( \mathbf{1}_{\lbrace \zeta_x<+\infty\rbrace}\mathbf{1}_{\lbrace T_1<\zeta_x \rbrace} \right), \label{distfirstimeQ}
\end{align}
where $T_1$ is the time of first split. The first term in the right-hand side of $\eqref{distfirstimeQ}$ is the probability that a Brownian motion with drift $\mu$ starting from $x$ reaches $0$ before a exponential time with parameter $\beta$. By formula 1.1.2 p.250 of \cite{borodin2002handbook}, we thus have:
\begin{equation}\label{divapresreach}
\mathbb{P}\left(T_1\geq\zeta_ x\right)=e^{-\lambda x}.
\end{equation}  
We now look the second term. It is the probability that a Brownian motion with drift $\mu$ splits before reaching $0$ and that each process starting from its children becomes extinct. Consider $(K^x_s)$ a Brownian motion with drift $\mu$ starting from $x$ and killed at $0$. Using the Markov property and the independence between the Brownian motions, the first split time and the number of children, we have that:
\begin{align}
\mathbb{E}\left( \mathbf{1}_{\lbrace \zeta_x<+\infty\rbrace}\mathbf{1}_{\lbrace T_1<\zeta_x \rbrace}\right) &=\mathbb{E}\left( \mathbf{1}_{\lbrace K^x_{T_1}>0\rbrace}Q(K^x_{T_1})^L\right) \nonumber \\
&=\mathbb{E}\left( \mathbf{1}_{\lbrace K^x_{T_1}>0\rbrace}G\left(Q\left(K^x_{T_1}\right)\right)\right) \nonumber \\
&= \int_0^{+\infty}G(Q(y))\mathbb{P}\left(K^x_{T_1}\in \mathrm{d}y\right). \label{DKX1}
\end{align}
We can derive from 1.0.5 and 1.1.6 p.250-251 of \cite{borodin2002handbook} that: 
\begin{equation}
\mathbb{P}\left(K^x_{T_1}\in \mathrm{d}y\right)=\frac{\beta}{\alpha} e^{-\mu x}e^{\mu y}\left[e^{-\alpha |y-x|}-e^{-\alpha (y+x)}\right] \mathrm{d}y\label{dKX}
\end{equation}
and thus by plugging \eqref{dKX} into \eqref{DKX1} we get the desired result.
\end{proof}
We will see in Proposition \eqref{premdist} that $x_0:=\inf J$ is finite. Therefore, Equation \eqref{Qprimeequ} provides the existence in $\mathbb{R}\cup \lbrace -\infty \rbrace$ of the right-limit $Q^{\prime}$ as $x$ tends to $x_0$. Actually, this limit is in $\mathbb{R}$ (see the remark just after Lemma \ref{continuitymu0}). As above for $Q$, we denote by $Q^{\prime}(x_0)$ this limit, when it is finite. \\

 To prove Theorem \ref{thequiv}, we now give a bound of $Q$ in the following lemma. Although stated for the binary case in  Lemma 15 of \cite{harris2006further}, the result holds more generally when we just suppose $G(0)=0$ (which is equivalent to $p_0=0$ or $q=0$). 
\begin{lemma}\label{majQinf}
If $G(0)=0$, $\mu > -\mu_0$ then for all $0<y<x$:
\begin{equation}
Q(x) \leq (Q(y) e^{\rho y})e^{-\rho x},
\end{equation}
where $\rho=\sqrt{\mu^2+2\beta(1-Q(y))}+\mu$.
\end{lemma}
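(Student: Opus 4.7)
The idea is to build an explicit exponential supersolution of \eqref{KKPx} on $[y,\infty)$ and conclude by a maximum-principle comparison. I would set $W(x):=Q(y)\,e^{-\rho(x-y)}$ for $x\ge y$; since $G(0)=0$ forces $q=0$, both $W$ and $Q$ share the same boundary values on $[y,\infty)$, namely $W(y)=Q(y)$ and $W(\infty)=0=Q(\infty)$. A direct computation using $W'=-\rho W$, $W''=\rho^2 W$, together with the defining relation $(\rho-\mu)^2=\mu^2+2\beta(1-Q(y))$, gives
\[
\tfrac{1}{2}W''+\mu W' \;=\; \tfrac{1}{2}\bigl((\rho-\mu)^2-\mu^2\bigr)\,W \;=\; \beta\bigl(1-Q(y)\bigr)\,W,
\]
so that the KPP operator $\mathcal{L}[u]:=\tfrac{1}{2}u''+\mu u'+\beta(G(u)-u)$ evaluated at $W$ simplifies to $\mathcal{L}[W] = \beta\bigl(G(W)-Q(y)\,W\bigr)$.

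\textbf{Supersolution and comparison.} To confirm that $W$ is a supersolution ($\mathcal{L}[W]\le 0$), I would verify the pointwise inequality $G(W(x))\le Q(y)\,W(x)$ on $[y,\infty)$. The hypothesis $G(0)=0$ combined with convexity of $G$ forces the map $s\mapsto G(s)/s$ to be non-decreasing on $(0,1]$, which via the chord inequality yields $G(W)/W\le G(Q(y))/Q(y)$ since $W(x)\in[0,Q(y)]$ on this range. Once $\mathcal{L}[W]\le 0\equiv\mathcal{L}[Q]$ is in hand, I would set $\varphi:=Q-W$, observe $\varphi(y)=\varphi(+\infty)=0$ (using again that $q=0$ when $G(0)=0$), and argue by contradiction: if $\varphi$ had a strictly positive interior maximum at some $x^\ast\in(y,\infty)$, then $\varphi'(x^\ast)=0$ and $\varphi''(x^\ast)\le 0$; subtracting $\mathcal{L}[W](x^\ast)\le 0$ from $\mathcal{L}[Q](x^\ast)=0$ would then force $\beta\bigl(G(Q(x^\ast))-G(W(x^\ast))\bigr)\le 0$, contradicting $Q(x^\ast)>W(x^\ast)$ and the strict monotonicity of $G$ on $(0,1]$. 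Hence $Q\le W$ on $[y,\infty)$, which is exactly the claimed bound $Q(x)\le Q(y)\,e^{\rho y}\,e^{-\rho x}$.

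\textbf{Main obstacle.} The delicate step is the pointwise supersolution inequality $G(W)\le Q(y)\,W$. In the binary setting $G(s)=s^2$ of Lemma~15 in Harris et al.\ \cite{harris2006further} this reduces to the trivial $W\le Q(y)$; for a general convex $G$ with $G(0)=0$ the chord inequality only brings matters down to $G(Q(y))\le Q(y)^2$, and it is this final reduction — extracting a quadratic upper bound on $G(Q(y))$ from the convexity of $G$ alone — that represents the heart of the extension from the Harris binary argument to the general branching law.
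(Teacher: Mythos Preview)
Your ``main obstacle'' is not an obstacle at all: by the standing assumption $L\in\mathbb{N}\setminus\{1\}$ one has $p_1=0$, and together with $G(0)=0$ (i.e.\ $p_0=0$) this gives $G(s)=\sum_{i\ge 2}p_is^i\le s^2$ for every $s\in[0,1]$. Hence $G(W(x))\le W(x)^2\le Q(y)W(x)$ and $\mathcal{L}[W]\le 0$ follows immediately; there is nothing to extract from convexity. The paper itself uses exactly this bound a few lines later in the proof of Lemma~\ref{lemmaequiv}.

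The genuine gap is in your maximum-principle step. Subtracting $\mathcal{L}[W](x^\ast)\le 0$ from $\mathcal{L}[Q](x^\ast)=0$ gives, at a positive interior maximum of $\varphi=Q-W$,
\[
\tfrac12\varphi''(x^\ast)+\beta\bigl(G(Q(x^\ast))-G(W(x^\ast))\bigr)-\beta\varphi(x^\ast)\ \ge\ 0,
\]
so that $\beta\bigl(G(Q(x^\ast))-G(W(x^\ast))\bigr)\ge \beta\varphi(x^\ast)-\tfrac12\varphi''(x^\ast)>0$. This is the \emph{opposite} sign to what you claim, and is perfectly compatible with $Q(x^\ast)>W(x^\ast)$; no contradiction arises. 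The nonlinear KPP operator does not enjoy a comparison principle here because the zeroth-order term $u\mapsto \beta(G(u)-u)$ is not monotone on $(0,1)$ (its derivative $G'(u)-1$ changes sign).

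The repair is to linearise before comparing. From $G(Q(x))/Q(x)\le Q(x)\le Q(y)$ for $x\ge y$ (again $G(s)\le s^2$) one gets
\[
\tfrac12 Q''+\mu Q'-\beta\bigl(1-Q(y)\bigr)Q\ \ge\ 0\quad\text{on }[y,\infty),
\]
while $W$ satisfies $\tfrac12 W''+\mu W'-\beta(1-Q(y))W=0$ by construction. Now $\varphi=Q-W$ obeys a \emph{linear} differential inequality with strictly negative zeroth-order coefficient $-\beta(1-Q(y))$, and the elementary maximum principle on $[y,\infty)$ (with $\varphi(y)=\varphi(\infty)=0$) does force $\varphi\le 0$. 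This is the PDE route; the paper instead follows Harris et~al.\ and uses the martingale
\[
M_t=Q(Y_{t\wedge\tau_x})\exp\!\left(\beta\!\int_0^{t\wedge\tau_x}\!\Bigl(\tfrac{G(Q(Y_s))}{Q(Y_s)}-1\Bigr)\mathrm{d}s\right)
\]
together with the same pointwise bound $G(Q)/Q\le Q(y)$ inside the exponential, which is a different (probabilistic) packaging of the same inequality.
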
 
In particular, this lemma tells us that for any $\epsilon>0$ there exist $x_1,k>0$ such that for any $x>x_1$:
\begin{equation}
Q(x)\leq ke^{-(\lambda-\epsilon)x},
\end{equation}
where $\lambda=\sqrt{\mu^2+2\beta}+\mu.$ The proof of Lemma \ref{majQinf} is identical  to that of  \cite{harris2006further} except that we are not in the binary case. Therefore, if  $Y_t$ is a Brownian motion with drift $\mu$ starting from $0$ and $\tau_z:=\inf \lbrace t : Y_t=-z \rbrace$, the process $(M_t)$ defined by:
$$M_t:=Q(Y_{t \wedge \tau_x})\exp\left(\beta \int_0^{\tau_x} \left(\frac{G\left(Q\left(Y_s\right)\right)}{Q(Y_s)}-1\right)\mbox{d}s\right)$$
replaces the process $(M_t)$ in \cite{harris2006further}. The rest of the proof of Theorem \ref{thequiv} is from now on different from \cite{harris2006further}. We begin by proving the case where $q=0$.
\begin{lemma}
\label{lemmaequiv}
Suppose $q=0$ and that $m=\infty$ or $\mu>-\mu_0$. Then there exists $C>0$ such that:
\begin{equation}\label{equivalent}
Q(x)=Ce^{-\lambda x}+\underset{x\rightarrow+\infty}{o}(e^{-\lambda x})
\end{equation}
and
\begin{equation}\label{equivalent2}
Q^{\prime}(x) \underset{x\rightarrow+\infty}{\sim} -C\lambda e^{-\lambda x},
\end{equation}
where $\lambda=\mu+\sqrt{\mu^2+2 \beta}>0$. 
\end{lemma}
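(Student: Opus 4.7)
The plan is to start from the integral identity \eqref{KKPtempsarretx}, multiply both sides by $e^{\lambda x}$, and show that the resulting expression converges to a finite, positive limit $C$. To exploit the fact that $q=0$, I note that $p_0=0$ and, since the reproduction law satisfies $L\in\mathbb{N}\setminus\{1\}$, also $p_1=0$. Consequently $G(s)=\sum_{i\ge 2}p_i s^i\le s^2$ for $s\in[0,1]$. Combining this with the a priori bound from Lemma \ref{majQinf}, namely $Q(y)\le K e^{-(\lambda-\epsilon)y}$ for arbitrarily small $\epsilon>0$ and $y$ large, yields the crucial decay estimate $G(Q(y))\le K^2 e^{-2(\lambda-\epsilon)y}$.

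Next, in \eqref{KKPtempsarretx} I split the integral at $y=x$. For $y\le x$ the bracket is $e^{-\alpha x}(e^{\alpha y}-e^{-\alpha y})$, and for $y\ge x$ it is $e^{-\alpha y}(e^{\alpha x}-e^{-\alpha x})$. Using $\lambda=\mu+\alpha$ and setting $\lambda'=\alpha-\mu>0$ (so $\lambda\lambda'=2\beta$), multiplying by $e^{\lambda x}$ gives
\[
e^{\lambda x}Q(x)=1+\frac{\beta}{\alpha}\left[\int_0^x e^{\lambda y}G(Q(y))\,dy-\int_0^x e^{-\lambda' y}G(Q(y))\,dy+(e^{2\alpha x}-1)\int_x^{\infty}e^{-\lambda' y}G(Q(y))\,dy\right].
\]
With $G(Q(y))\le K^2 e^{-2(\lambda-\epsilon)y}$ and $\epsilon<\lambda/2$, each of the first two integrals converges absolutely as $x\to\infty$, and in the third term the tail integral is $O(e^{-(\lambda'+2\lambda-2\epsilon)x})$, so after multiplication by $e^{2\alpha x}=e^{(\lambda+\lambda')x}$ it becomes $O(e^{-(\lambda-2\epsilon)x})\to 0$. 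Hence $e^{\lambda x}Q(x)$ converges to some constant $C\in\mathbb{R}$. Positivity of $C$ is then immediate from \eqref{divapresreach}: $Q(x)\ge\mathbb{P}(T_1\ge\zeta_x)=e^{-\lambda x}$, so $e^{\lambda x}Q(x)\ge 1$ and $C\ge 1$.

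For the derivative, I would differentiate the identity above (the boundary terms from the split telescope perfectly because $2\alpha-\lambda'=\lambda$), obtaining the clean relation
\[
Q'(x)+\lambda Q(x)=2\beta\, e^{\lambda' x}\int_x^{\infty}e^{-\lambda' y}G(Q(y))\,dy.
\]
The same tail bound used above shows the right-hand side is $O(e^{-(\lambda-2\epsilon)x})=o(e^{-\lambda x})$. Combined with $Q(x)=Ce^{-\lambda x}+o(e^{-\lambda x})$ from the previous step, this yields $Q'(x)\sim -C\lambda e^{-\lambda x}$. The delicate step is the third one in the splitting: the factor $e^{2\alpha x}$ is a priori dangerous, and only the quadratic vanishing $G(s)=O(s^2)$ at zero—which requires both $p_0=0$ and $p_1=0$—makes the remainder decay fast enough to conclude.
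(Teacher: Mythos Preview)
Your proof is correct and follows essentially the same route as the paper: multiply \eqref{KKPtempsarretx} by $e^{\lambda x}$, split the integral at $y=x$, use $G(s)\le s^2$ together with Lemma~\ref{majQinf} to control the three pieces, then differentiate and observe that the boundary terms cancel to get the derivative estimate. Your explicit argument for $C\ge 1$ via $Q(x)\ge e^{-\lambda x}$ is a nice touch that the paper leaves implicit in its formula~\eqref{temp1}.
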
 
\begin{proof}
We have supposed that $p_0=p_1=0$, which implies that $G(s)\leq s^2, \forall s \in [0,1]$. Hence, for $0<\epsilon<\frac{\lambda}{2}$ and $y>0$, we have:
\begin{align}
 e^{\lambda y} G(Q(y)) &\leq e^{\lambda y} Q^2(y) \nonumber \\
 &\leq C_1e^{\lambda y} e^{-2(\lambda-\epsilon)y} \nonumber\\
 &\leq C_1 e^{-(\lambda-2\epsilon)y},\label{majelambQ}
\end{align}
where $C_1>0$. The second inequality in \eqref{majelambQ} is a consequence of Lemma \ref{majQinf}. Let $x\geq0$, we rewrite \eqref{KKPtempsarretx}:
\begin{align}
Q(x)e^{\lambda x}=1+\frac{\beta}{\alpha}&\left[\quad \; \; \int_0^x e^{\lambda y} G(Q(y))\mathrm{d}y \right. \label{Qint1}\\
&\quad +e^{2\alpha x}\int_x^{+\infty} e^{-(\alpha-\mu )y} G(Q(y))\mathrm{d}y \label{Qint2} \\
&\quad -\left.\int_0^{+\infty} e^{-(\alpha-\mu ) y} G(Q(y))\mathrm{d}y \qquad \;\right]. \label{Qint3}
\end{align}
The inequality $\eqref{majelambQ}$ implies that the integral terms in $\eqref{Qint2}$ and in $\eqref{Qint3}$ are well defined and that the integral term in $\eqref{Qint1}$ is convergent in $+\infty$. In the same way, the term in \eqref{Qint2} converges to $0$ when $x$ goes to infinity, and the term in \eqref{Qint3} does not depend on $x$. Hence, we have that:
\begin{equation}
\lim_{x \rightarrow + \infty} Q(x) e^{\lambda x}=1+\frac{\beta}{\alpha}\int_0^{+\infty}\left[e^{\lambda y}-e^{(\alpha-\mu)y}\right]G(Q(y))\mathrm{d}y,\label{temp1}
\end{equation}
which is $\eqref{equivalent}$. Now we will establish $\eqref{equivalent2}$ by differentiating $x \mapsto e^{\lambda x}Q(x)$:
\begin{align}
Q^{\prime}(x)e^{\lambda x}+\lambda Q(x)e^{\lambda x}&=\frac{\beta e^{\lambda x}G(Q(x))}{\alpha} \label{majQprime1}\\
&\quad +2\beta e^{2\alpha x} \int_{x}^{+\infty}e^{-(\alpha-\mu )y} G(Q(y))\mathrm{d}y\label{majQprime2}\\
&\quad -\frac{\beta e^{\lambda x}G(Q(x))}{\alpha}.\label{majQprime3}
\end{align}
The right-hand side of \eqref{majQprime1} cancel the term in \eqref{majQprime3}. Moreover, \eqref{majQprime2} can be bounded by using \eqref{majelambQ}. We then get that for $\epsilon>0$ small enough, there exists $C>0$ such that: 
\begin{equation}
2\beta e^{2\alpha x} \int_{x}^{+\infty}e^{-(\alpha-\mu )y} G(Q(y))\mathrm{d}y \leq Ce^{-(\lambda-2\epsilon)x}.
\end{equation}
Therefore,
\begin{equation}\label{temp2bis}
\lim_{x \rightarrow +\infty} 2\beta e^{2\alpha x} \int_{x}^{+\infty} \!  \!  \! e^{-(\alpha-\mu )y} G(Q(y))\mathrm{d}y =  \! \lim_{x \rightarrow +\infty}Q^{\prime}(x)e^{\lambda x}+\lambda Q(x)e^{\lambda x}=0
\end{equation}
and thus:
\begin{equation}\label{temp2}
\lim_{x \rightarrow +\infty}Q^{\prime}(x)e^{\lambda x}=-\lim_{x \rightarrow +\infty}\lambda Q(x)e^{\lambda x}.
\end{equation}
Equations \eqref{temp1} and \eqref{temp2} yield \eqref{equivalent2}. 
\end{proof}
Now, relying on Lemma \ref{lemmaequiv} we prove Theorem \ref{thequiv} dropping the hypothesis $q=0$.
\begin{proof}
 We consider for $x>0$ and $s \in [0,1]$: 
\begin{equation}
\tilde{Q}(x):=\frac{Q(x)-q}{1-q} \mbox{ and }\tilde{G}(s):=\frac{G((1-q)s+q)-q}{1-q}.
\end{equation}
It is easy to show that there exists $(\tilde{p}_i)\in ({\mathbb{R}_+})^{\mathbb{N}}$ such that $\tilde{p}_0=0$, $\sum_{i=0}^{\infty} \tilde{p}_i=1$ and $\tilde{G}(s)=\sum_{i=0}^{\infty} \tilde{p}_i s^i$. Besides, $\tilde{Q}(0)=1$, $\tilde{Q}(+\infty)=0$ and $\tilde{Q}$ solves the equation:
\begin{equation}
\frac{1}{2}y^{\prime \prime}(x)+\mu y^{\prime}(x)+\beta\left(\tilde{G}(y(x))-y(x)\right)=0, \; \forall{x >0},
\end{equation}
which means that $\tilde{Q}$ is the extinction probability of a branching Brownian motion with reproduction law $\tilde{L}$ with generating function $\tilde{G}$. The random variable $\tilde{L}$ has the following probabilistic interpretation, which can be found more precisely in \cite{athreya1972branching}. Consider a supercritical Galton-Watson tree with reproduction law $L$ and generating function $G$. If we condition the tree to survive and if we keep only the prolific individuals (that is these which give birth to an infinite tree) we obtain a Galton-Watson tree with reproduction law $\tilde{L}$. In the same way, the branching Brownian motion with with reproduction law $\tilde{L}$ without killing on a barrier is the branching Brownian motion with reproduction law $L$ without killing on a barrier conditioned to survive where we keep only the prolific individuals.

The assumptions of Lemma \ref{lemmaequiv} are almost satisfied. Furthermore, we can simply ignore reproduction events corresponding to $\tilde{p}_1$ and replace our branching Brownian motion with rate $\beta$ and reproduction law described by $\tilde{G}$ by one where the branching rate is $(1-\tilde{p}_1)\beta$ and the reproduction law is described by the generating function:
\begin{equation}
G_2(s):=\frac{\tilde{G}(s)-\tilde{p}_1 s}{1-\tilde{p}_1}.
 \end{equation}
 We now have that:  $G_2(0)=G_2^{\prime}(0)=0$ and we thus can apply Lemma $\ref{lemmaequiv}$:
 \begin{equation}
 \frac{Q(x)-q}{1-q}=\tilde{Q}(x)\underset{x \rightarrow +\infty}{\sim}ke^{-\tilde{\lambda} x},
 \end{equation}
 where $k>0$ and $\tilde{\lambda}=\sqrt{2\beta(1-\tilde{p}_1)+\mu^2}+\mu=\sqrt{2\beta(1-G^{\prime}(q))+\mu^2}+\mu$. 
\end{proof}
 
We now consider $J$, the maximal open subinterval  of $I$ (defined in Proposition \ref{extension}) such that $(0,\infty) \subset J$  and on which $Q$ is decreasing. 
\begin{proposition}\label{premdist}
Fix $\mu \in (-\mu_0, +\infty)$. Let $J$ be defined as above. We have $x_0(\mu):=\inf J >-\infty$ and either $Q^{\prime}(x_0(\mu),\mu)=0$ or  $Q(x_0(\mu),\mu)=R_G$.
\end{proposition}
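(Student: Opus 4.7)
The plan is to split the proof into two independent claims: first, assuming $x_0>-\infty$, we establish the dichotomy $Q'(x_0)=0$ or $Q(x_0)=R_G$; second, we show $x_0>-\infty$.

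For the first claim I distinguish whether $x_0>x_l$ or $x_0=x_l$. If $x_0>x_l$, then $Q$ and $Q'$ extend continuously to $x_0$, so the strict decrease $Q'<0$ on $J$ gives $Q'(x_0)\leq 0$ by continuity; maximality of $J$ rules out the strict inequality (otherwise $Q'<0$ on an open neighbourhood of $x_0$ would allow enlarging $J$ to the left), forcing $Q'(x_0)=0$. If $x_0=x_l$, Proposition~\ref{extension} gives $|Q(x)|\to R_G$ or $|Q'(x)|\to +\infty$ as $x\to x_0^+$; since $Q>1$ on $(x_0,0)$, the first alternative reads $Q(x_0)=R_G$, while the second is excluded under the assumption $Q(x_0^+)<R_G$: then $G\circ Q$ is bounded on the bounded interval $(x_0,0)$, and a Gronwall estimate applied to the ODE $Q''=-2\mu Q'-2\beta(G(Q)-Q)$ yields $|Q'|$ bounded, a contradiction.

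For the second claim I argue by contradiction, assuming $x_0=-\infty$. Then $Q$ is strictly decreasing on $\mathbb{R}$ and $L:=\lim_{x\to -\infty}Q(x)>1$. The central tool is the Lyapunov functional $E(x):=\tfrac12 Q'(x)^2+2\beta V(Q(x))$, with $V(s):=\int_0^s(G(t)-t)\,\mathrm{d}t$, satisfying $E'(x)=-2\mu Q'(x)^2$. When $L<+\infty$, $Q$ is bounded on $\mathbb{R}$, standard interior estimates for the linear-in-$Q'$ ODE give $Q'$ and $Q''$ bounded, and integrability $\int_{-\infty}^0|Q'|=L-1$ combined with the uniform continuity of $Q'$ force $Q'(x)\to 0$ as $x\to -\infty$. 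Passing to the limit in the ODE yields $Q''(x)\to -2\beta(G(L)-L)$, and since $\int_{-\infty}^0 Q''=Q'(0)$ is finite, we conclude $G(L)=L$, which contradicts $L>1$ via strict convexity of $G$ (with $G(1)=1$ and $G'(1)=m>1$, hence $G(s)>s$ for $s>1$).

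It remains to exclude $L=+\infty$, which requires $R_G=+\infty$ (and then $V(+\infty)=+\infty$ by superlinearity of $G$). For $\mu\leq 0$, monotonicity of $E$ (non-decreasing in $x$, hence $E(x)\leq E(+\infty)=2\beta V(q)$) yields $V(Q(x))\leq V(q)$; combined with $V(+\infty)=+\infty$ this bounds $Q$ and reduces to the previous case. The main obstacle is $\mu>0$ with $L=+\infty$, where $E$ grows unboundedly as $x\to -\infty$ and the energy bound fails. Here I would exploit the monotonicity of $u(x):=Q'(x)e^{2\mu x}$, which by \eqref{Qprimeequ} satisfies $u'(x)=-2\beta e^{2\mu x}(G(Q(x))-Q(x))\leq 0$ and is therefore confined to $[Q'(0),0]$ (otherwise $Q'>0$ for $x$ sufficiently negative), and couple this with the phase-plane equation $(\psi^2/2)'=-2\mu\psi-2\beta(G(Q)-Q)$ for $\psi(Q):=Q'(x(Q))$ together with a dominant-balance analysis against the superlinear growth of $G$, to show that $|\psi|$ cannot remain strictly positive for all $Q\in(1,+\infty)$, yielding the required contradiction.
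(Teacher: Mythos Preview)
Your dichotomy argument is correct and essentially matches the paper's (the paper phrases it as: if $Q(x_0)\neq R_G$ then $G\circ Q$ is bounded on $[x_0,0]$, so \eqref{Qprimeequ} keeps $Q'$ bounded near $x_0$, hence $x_0>x_l$, and then maximality of $J$ forces $Q'(x_0)=0$).

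For the finiteness of $x_0$ you take a much longer route than the paper, and it has a gap. The paper simply integrates the ODE once,
\[
\tfrac12\,Q'(x)+\mu\,Q(x)+\beta\int_{x_\epsilon}^{x}\bigl(G(Q)-Q\bigr)\,\mathrm{d}y=C,
\]
observes that for $y<x_\epsilon$ one has $G(Q(y))-Q(y)\ge G(l-\epsilon)-(l-\epsilon)>0$, so the integral tends to $-\infty$ while $\mu Q(x)$ stays bounded (as $l<\infty$); hence $Q'(x)\to+\infty$, contradicting $Q'\le 0$. That three-line argument disposes of every finite limit $L$ uniformly. By contrast, your step ``standard interior estimates for the linear-in-$Q'$ ODE give $Q'$ and $Q''$ bounded'' needs the forcing $G(Q)-Q$ to be bounded on each unit interval uniformly in the interval, and this fails when $L=R_G<\infty$ with $G(R_G^-)=+\infty$; so your chain (bounded $Q'\Rightarrow$ uniformly continuous $Q'\Rightarrow Q'\to0\Rightarrow G(L)=L$) breaks precisely in a subcase that the paper's direct argument handles for free.

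For the remaining case $L=+\infty$ with $\mu>0$, your final paragraph is not a proof: confining $u=Q'e^{2\mu x}$ to $(Q'(0),0)$ is correct and gives $|Q'(x)|\le|Q'(0)|e^{-2\mu x}$, but ``dominant-balance analysis against the superlinear growth of $G$'' is a placeholder, not an argument. (To be fair, the paper's written proof also tacitly uses $l<\infty$ --- it needs $Q(x_\epsilon)=l-\epsilon$ finite and bounds $|\mu Q|$ by $|\mu|R_G$ --- so this case is a shared lacuna rather than a flaw peculiar to your approach.)
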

\begin{proof}
We begin by proving that $x_0(\mu)$ is finite. First, suppose that $R_G=1$. By definition of $I$, we have in this case $I=(0,+\infty)$ and thus $J=I$ which implies $x_0(\mu):=\inf J=0$. Furthermore, $Q(x_0(\mu),\mu)=R_G=1$. Suppose, now that $R_G>1$. By Proposition \ref{extension}, this implies that $I$ is strictly bigger than (0,$+\infty$). Moreover, $Q$ is decreasing on $(0,+\infty)$ and $Q^{\prime}(0)\neq 0$ because $Q^{\prime}(0)= 0$ would imply that $Q\equiv 1$ by Cauchy-Lipschitz Theorem. Therefore $Q^{\prime}(0)<0$. This implies that $Q$ is decreasing on a interval of the form $[y,0)$, with $y<0$. Suppose that the lower bound of $J$ is $-\infty$ (or equivalently that $J=I=\mathbb{R}$). Since on $I$, $Q(x)<R_G$, and since $Q$ is decreasing on $\mathbb{R}$, there exists $l\in (1,R_G]$ such that:
\begin{equation}\label{limitabsurde}
\lim_{x \rightarrow - \infty} Q(x)=l.
\end{equation}

Let $\epsilon \in (0,l-1)$. By the Intermediate Value Theorem, there exists $x_\epsilon \in \mathbb{R}^-$ such that $Q(x_\epsilon)=l-\epsilon>1$.  
Moreover, by integrating $\eqref{KKPx}$, we get that there exists $C \in \mathbb{R}$, such that:
\begin{equation}\label{limitabsurde15}
\frac{1}{2}Q^{\prime}(x)+\mu Q(x)+\int_{x_{\epsilon}}^{x}\beta\left(G(Q(x))-Q(x)\right)\mathrm{d}x=C, \quad \forall x \in \mathbb{R}.
\end{equation}
Since for $x<x_\epsilon$,  $1<Q(x)\leq R_G$ and since $u \mapsto G(u)-u$ is increasing and positive on $(1,R_G)$, Equation \eqref{limitabsurde15}  yields:
\begin{align}
Q^{\prime}(x)\geq & -2\int_{x_\epsilon}^x\beta\left(G(Q(x))-Q(x)\right) \mathrm{d}x+2C-2|\mu|R_G \nonumber \\
\geq &-2\left(x-x_\epsilon\right)\beta\left(G(l-\epsilon)-l-\epsilon\right) +2C-2|\mu|R_G.
\label{limitabsurde2}
\end{align}
Consequently, $\lim_{x \rightarrow -\infty} Q^{\prime}(x)=+\infty$ which is in contradiction with the fact that $Q$ is decreasing on $\mathbb{R}$. Therefore, $x_0(\mu)$ is finite. \\

Let us prove the last part of the proposition. Suppose that $Q(x_0(\mu),\mu) \neq R_G$. In this case $G(Q(y))<+\infty, \; \forall y \in [x_0(\mu),0]$. Hence, \eqref{Qprimeequ} yields that:
$$\limsup_{x \downarrow x_0(\mu)} \left| Q^{\prime}(x)  \right| =\lim_{x \downarrow x_0(\mu)} \left| Q^{\prime}(x) \right|<+\infty.$$
Thus, by Proposition \ref{extension}, $x_0(\mu)\neq x_l$. Therefore, by definition of $x_0(\mu)$,  there exists $\epsilon>0$ such that $Q$ is defined and smooth on an interval $J_{\epsilon}=(x_0(\mu)-\epsilon, x_0(\mu)+\epsilon)$, not decreasing on $(x_0(\mu)-\epsilon,x_0(\mu))$ and decreasing on $(x_0(\mu),  x_0(\mu)+\epsilon)$. We then have that $Q^{\prime}(x_0(\mu),\mu)=0$. Hence, either $Q^{\prime}(x_0(\mu),\mu)=0$ or  $Q(x_0(\mu),\mu)=R_G$.
\end{proof}
As we said before, we will often write $x_0$ instead of $x_0(\mu)$ in what follows. By definition of $J$, $Q$ is decreasing on $J$. This implies that $Q^{\prime}(x)<0$ for all $x\in J\setminus S$, where $S$ is a subset of $J$ without accumulation points. We show that $S$ is actually empty.
\begin{proposition}\label{propqprimeinfstrict0}
For all $x\in J$, $Q^{\prime}(x)<0$.
\end{proposition}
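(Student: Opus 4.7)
The plan is to proceed by contradiction: assume there exists $x_1 \in J$ with $Q'(x_1) = 0$, set $u_1 := Q(x_1)$, and use the KPP equation \eqref{KKPx} to read off $Q''(x_1) = -2\beta(G(u_1) - u_1)$. The sign of $G(u_1) - u_1$ will determine whether $x_1$ is a strict local minimum or a strict local maximum of $Q$, and either alternative will contradict the fact that $Q$ is decreasing on the open interval $J$. Since $J \supset (0,+\infty)$ with $Q(0) = 1$ and $Q(+\infty) = q$, monotonicity on $J$ forces $Q(x) \ge q$ throughout $J$, so $u_1 \ge q$. Moreover, $m = \mathbb{E}(L) > 1$ forces $p_i > 0$ for some $i \ge 2$, so $G$ is strictly convex on $[0,R_G)$; combined with $G(q) = q$, $G(1) = 1$ and $G'(1) = m > 1$ this gives $G(u) < u$ on $(q,1)$ and $G(u) > u$ on $(1,R_G)$.

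I would first dispatch the equilibrium values $u_1 \in \{q,1\}$: in either case $(u_1, 0)$ is a stationary point of the planar system \eqref{deff}, so by Cauchy-Lipschitz uniqueness $Q$ would be identically equal to $q$ or to $1$ on $J$, contradicting the boundary values $Q(0) = 1 > q = Q(+\infty)$.

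For the remaining cases, if $u_1 \in (q,1)$ then $Q''(x_1) > 0$, so $x_1$ is a strict local minimum of $Q$; since $J$ is open, a right-neighbourhood $(x_1, x_1+\epsilon)$ lies in $J$ and $Q > Q(x_1)$ there, contradicting $Q$ decreasing on $J$. Symmetrically, if $u_1 > 1$ (which forces $x_1 < 0$ since $Q$ is decreasing with $Q(0)=1$), then $Q''(x_1) < 0$, $x_1$ is a strict local maximum, and the left-neighbourhood $(x_1-\epsilon, x_1) \subset J$ supplies the analogous contradiction. The only place any delicacy enters is in confirming strict convexity of $G$, which is where the hypothesis $m > 1$ is used; apart from that the proof is just a second-derivative-test reading of the sign of $G(u) - u$ on the two subintervals $(q,1)$ and $(1,R_G)$.
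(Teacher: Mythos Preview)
Your proof is correct and follows essentially the same route as the paper's: assume $Q'(x_1)=0$ for some $x_1\in J$, read off the sign of $Q''(x_1)=-2\beta(G(Q(x_1))-Q(x_1))$ from the KPP equation, and observe that the resulting strict local extremum contradicts $Q$ being decreasing on the open interval $J$. The only cosmetic differences are that the paper splits cases by the position of $x_1$ (in $(x_0,0)$, at $0$, or in $(0,+\infty)$) rather than by the value $u_1=Q(x_1)$, and disposes of the boundary case $Q'(0)=0$ by citing the Cauchy--Lipschitz argument already made in the proof of Proposition~\ref{premdist} rather than restating it.
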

\begin{proof}
We recall that $J$ is an open interval and thus $x_0$ is not included in $J$. Consider now $x_1\in J$ and suppose that $Q^{\prime}(x_1)=0$. The KPP equation \eqref{KKPx} yields that:
\begin{equation}\label{QsecondabsurdeQprimeegal0}
Q^{\prime \prime}(x_1)=-2\beta\left(G(Q(x_1))-Q(x_1)\right).
\end{equation}

If $x_1\in (x_0,0)$, then $Q^{\prime \prime}(x_1)<0$ since $Q(x) > Q(0)=1, \forall x\in (x_0,0)$ and since $G(s)>s, \forall s>1$. Therefore $x_1$ is a local maximum, which is in contradiction with the fact that on $J$, $Q$ is decreasing. 

Similarly, if $x_1\in (0,+\infty)$, then $Q^{\prime \prime}(x_1)>0$ which also contradicts the decrease of $Q$ on $J$. 

We have already proved that $Q^{\prime}(0)\neq 0$. Therefore for all $x\in J$, $Q^{\prime}(x)<0$. 
\end{proof}
\section{Radius of convergence}
In this section, we will focus on $R(\mu)$ the radius of convergence of $f_x$. We will show that it is a function of $\mu$ which does not depend of $x>0$ and determine how this radius evolves with respect to $\mu$. As a first step, we bound $R(\mu)$. 
\begin{proposition}\label{firstboundradius}
For any $\mu \in \mathbb{R}$, we have:
$$1\leq R(\mu)\leq R_G.$$
\end{proposition}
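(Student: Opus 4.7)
The plan is to prove the two inequalities separately. The lower bound $R(\mu)\geq 1$ is immediate: the coefficients $q_i(x)$ are non-negative and sum to $f_x(1)=Q(x)\leq 1$, so the series defining $f_x$ converges at $s=1$, which forces $R(\mu)\geq 1$ by definition of the radius of convergence.

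For the upper bound $R(\mu)\leq R_G$, I would construct an explicit subevent of $\{Z_x=n,\,\zeta_x<\infty\}$ whose probability is $p_n$ times a factor of subexponential order in $n$. Mimicking the one-step decomposition at the first branching time used in the proof of Lemma \ref{thKPPtempsarretx}, the event I have in mind is the following: the root splits before being absorbed ($T_1<\zeta_x$), it has exactly $L=n$ children at the shifted position $K^x_{T_1}=y>0$, and each of these $n$ children then hits the barrier before its own first split. On this event one has $Z_x=n$ and $\zeta_x<\infty$; using the single-BM absorption identity $e^{-\lambda y}$ (formula 1.1.2 of \cite{borodin2002handbook}, applied to a Brownian motion with drift $\mu$ started at distance $y$ above $0$) together with the independence of $L$, $K^x_{T_1}$, and the children's evolutions, the probability of this event equals
\[ p_n\int_{0}^{+\infty}\mathbb{P}\!\left(K^x_{T_1}\in\mathrm{d}y\right)e^{-n\lambda y}=:p_n\,a_n(x), \]
so that $q_n(x)\geq p_n\,a_n(x)$.

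It then remains to verify that $a_n(x)^{1/n}\to 1$, after which combining with $\limsup_n p_n^{1/n}=1/R_G$ (Cauchy--Hadamard applied to $G$) yields $\limsup_n q_n(x)^{1/n}\geq 1/R_G$, and hence $R(\mu)\leq R_G$ by \eqref{HCth}. For this last step I would use the explicit density \eqref{dKX}: as $y\to 0^+$ it is equivalent to $2\beta e^{-\lambda x}\,y$, so Watson's lemma applied to the Laplace-type integral $a_n(x)$ gives the polynomial decay $a_n(x)\sim 2\beta e^{-\lambda x}/(\lambda n)^2$, which is amply enough for $a_n(x)^{1/n}\to 1$. The only real subtlety in the argument is isolating a single-generation event whose probability is proportional to $p_n$, so that the radius of convergence of $G$ transfers to that of $f_x$; everything else is routine.
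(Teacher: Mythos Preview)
Your proof is correct and follows essentially the same approach as the paper: both isolate the subevent ``exactly one branching event, with $L=n$ offspring, each absorbed before splitting,'' obtain the lower bound $q_n(x)\geq p_n\,a_n(x)$, and show the factor $a_n(x)$ is of subexponential order so that the radius of $G$ transfers to $f_x$ via Cauchy--Hadamard. The only cosmetic difference is that the paper evaluates the integral $a_n(x)$ in closed form (obtaining $2\beta\bigl(e^{-\lambda x}-e^{-n\lambda x}\bigr)/\bigl(\lambda(n-1)(\alpha-\mu+\lambda n)\bigr)$) and then invokes the stability of the radius under multiplication by a rational function, whereas you extract the $n^{-2}$ asymptotic directly via Watson's lemma.
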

Observe that the case where $\mu \leq -\mu_0$ is trivial. Indeed, $R_G$ is always greater or equal to $1$ (since $G$ is a generating function) and $R(\mu)=1$ for this range of $\mu$ (see \cite{maillard2013number}).
\begin{proof}
The fact that $R(\mu)\geq 1$ is obvious since $f_{x}(1)=Q(x)<+\infty,\forall x\geq 0$.
Let $D_x$ be defined as the total number of birth event (which include the case $L=0$) before $\zeta_x$ and fix $k \in \mathbb{N}\setminus \lbrace 1 \rbrace$. To prove that $R(\mu)\leq R_G$, we will calculate $\tilde{q}_k(x)=\mathbb{P}\left(D_x=1,Z_x=k,\zeta_x<+\infty\right)$. Since these computations are very similar to those of Lemma  \ref{thKPPtempsarretx}, we will skip some details. Like in Lemma \ref{thKPPtempsarretx}, we denote by $(K^x_s)$ a Brownian motion with drift $\mu$ starting from $x$ and killed at $0$ and by $T_1$ an exponential random variable with parameter $\beta$. We also recall that $p_k=\mathbb{P}\left(L=k\right)$, $\alpha=\sqrt{2\beta+\mu^2}$ and $\lambda=\alpha+\mu$. Using Equations \eqref{divapresreach} and \eqref{dKX}, we get:
\begin{align}
\mathbb{P}\left(D_x=1,Z_x=k,\zeta_x<+\infty\right)&= \int_0^{+\infty}\mathbb{P}\left(K^x_{T_1}\in \mathrm{d}y\right)p_k\mathbb{P}\left(T_1\geq\zeta_{y}\right)^k \nonumber \\
&=\! \frac{\beta p_k}{\alpha} \! \int_0^{+\infty} \! \! \! \!  \! \! e^{-\mu x}e^{\mu y}\left[e^{-\alpha |y-x|}-e^{-\alpha (y+x)}\right] \! e^{-\lambda k y}\mathrm{d}y \nonumber \\
&= 2 \beta p_k \left[\frac{e^{-\lambda x}-e^{-k\lambda x}}{\lambda(k-1)\left(\alpha-\mu+\lambda k\right)}\right]. \label{dxzxzetax}
\end{align}
We know that multiplying the coefficients of a power series by a rational function does not change its radius of convergence. Furthermore, the term $e^{-\lambda x}-e^{-k\lambda x}$ is equivalent to $e^{-\lambda x}$ when $k$ goes to infinity and $x>0$. Therefore, the radius of convergence of the power series whose coefficients are the left-hand side of \eqref{dxzxzetax}, is $R_G$. Since
\begin{equation}
\mathbb{P}\left(D_x=1,Z_x=k,\zeta_x<+\infty\right)\leq q_k(x)=\mathbb{P}\left(Z_x=k,\zeta_x<+\infty\right),
\end{equation}
we can easily show, for instance with Cauchy-Hadamard Theorem \eqref{HCth}, that $R(\mu)\leq R_G$. 
\end{proof}
This proposition proves in particular that $R_G=1$ implies that $R(\mu)=1$ for any $\mu \in \mathbb{R}$. That is why we can suppose that $R_G>1$ (which implies that $m< +\infty$) throughout this section. We now prove Theorem \ref{thcaractRwrtmu}.
\begin{proof}
Fix $s_0=Q(x_0)$. By Proposition \ref{premdist}, $Q^{\prime}(x)<0, \; \forall x \in (x_0,+\infty)$ and $Q$ is continuous on $(x_0,+\infty)$ and right-continuous at $x_0$. Therefore, $Q^{-1}$ is well-defined on $(q,s_0)$. By right-continuity of $Q$ we even have that $Q^{-1}(s_0)=x_0$. Furthermore, we recall that for $x>0$:
\begin{equation}
f_x(s)=Q(Q^{-1}(s)+x)\mbox{, } \forall s \leq R(\mu)\wedge s_0. \tag{\ref{linkomegQ}}
\end{equation} 
Let us show that $s_0\leq R(\mu)$. Suppose not. Let us define $x_1=Q^{-1}(R(\mu))$. By Proposition $\ref{extension}$ there is a complex neighbourhood $V_1$ of $x_1$ such that $Q$ admit an analytic continuation on $V_1$. Furthermore, $Q^{\prime}(x_1)\neq 0$, which yields, by Theorem 10.30 of \cite{rudin1987real}, the existence of $V_2$, a complex neighbourhood of $x_1$ included in $V_1$ such that $Q$ admit a complex analytic inverse on $V_2$. Let us call $\phi$ the analytic continuation of $Q^{-1}$ on $Q(V_2)$. We  now fix $x >0$. Similarly, there exists $V_3$ a neighbourhood of $x_1$ such that $Q$ admit an analytic continuation on the open $V_3+x$. Vivanti-Pringsheim's Theorem \cite{hille1962analytic} ensures that if $R(\mu)$ is the radius of convergence of $f_x(s)$ then $s \mapsto f_x(s)$ cannot have an analytic extension around $R(\mu)$. But $s \mapsto Q(\phi(s)+x)$ is precisely such an extension on $Q(V_2) \cap Q(V_3)$. This is in contradiction with the assumption that $R(\mu)<s_0$ and therefore we have: 
\begin{equation}\label{encadrements0s1RMU} 
s_0 \leq R(\mu).
\end{equation}
We will now prove that $s_0=R(\mu)$. By Proposition \ref{premdist}, we have $Q(x_0)=R_G$ or $Q^{\prime}(x_0)=0$.

Suppose that $s_0=Q(x_0)=R_G$, we have by \eqref{encadrements0s1RMU} that $s_0=R_G\leq R(\mu)$. Proposition \ref{firstboundradius} tell us that $R(\mu)\leq R_G$, and thus $R(\mu)= R_G=Q(x_0)$ in this case. 

Suppose now that $Q^{\prime}(x_0)=0$. On $(q,s_0)$ we have that:
\begin{equation}\label{derivresps}
f^{\prime}_x(s)={(Q^{-1})}^{\prime}(s)Q^{\prime}(Q^{-1}(s)+x),
\end{equation}
where $f^{\prime}_x$ is the derivative of $f_x$ with respect to $s$. Let us prove by contradiction that $s_0\geq R(\mu)$. Suppose that $s_0<R(\mu)$. Since the radius of convergence of $f_x$ is strictly greater than $s_0$, the left-limit in $s_0$ of the left-hand side of $\eqref{derivresps}$ tends to a finite limit. However,  since we suppose that $Q^{\prime}(x_0)=0$ we have that 
$$\lim_{s \uparrow s_0} (Q^{-1})^{\prime}(s)=-\infty$$
and since $\left|Q^{\prime}(Q^{-1}(s_0)+x) \right|<0$, the left-limit of the right-hand side of $\eqref{derivresps}$ is not finite, which is a contradiction. Therefore, $s_0\geq R(\mu)$. This fact and \eqref{encadrements0s1RMU} yield: 
$$
Q(x_0) = R(\mu).
$$
\end{proof}
We thus have proved that the radius of convergence is $Q(x_0,\mu)$. We want now to focus on the variation of $R(\mu)$ with respect to $\mu$. For this purpose, inspired by Maillard's approach \cite{maillard2013number}, we introduce a new object $a$, defined by: 
\begin{equation}\label{defa} 
a(s,\mu):=\partial_x f_0(s)=Q^{\prime}(Q^{-1}(s)), \forall s \in \left(q,Q\left(x_0\left(\mu\right)\right)\right),
\end{equation}
where the second equality is given by \eqref{linkomegQ} and justifies the existence of $a$.
Although in this article we will only see $a$ as a mean to simplify some proofs, there are deeper reasons for its use. We know, see for instance Neveu \cite{neveu1988multiplicative}, that $(Z_x)_{x\geq 0}$ is a Galton-Watson process. Let us consider its infinitesimal generator defined by $b(s):=\partial_x F_0(s)$ (this is $a$ in  \cite{maillard2013number}), where $F$ is defined as in \eqref{defbigF}. As we will use $f$ instead of $F$, $a$ will be a slightly different object, which will be nevertheless identical to $b$ when $\mu<\mu_0$ and $s \in (q,1)$, and which will satisfy the same properties. The function $a$ is also a power series with radius of convergence $R(\mu)$. Since we do not use this fact in the present work, we will not prove it. Besides, we can notice that the definition of $a$ implies that the trajectory of $X$ (defined in $\eqref{defX}$) for $x\in(x_0,+\infty)$ is the same of the one of $s\mapsto (s,a(s))$ for $s \in \left(q,Q\left(x_0\right)\right)$. We can therefore work with either of them, depending on the situation. Finally, the travelling wave equation \eqref{KKPx} and the definition of $a$ yield:
\begin{equation}\label{eqa}
a^{\prime}(s)a(s)=-2\mu a(s)-2\beta\left(G(s)-s\right) \forall s\in (q,Q(x_0)),
\end{equation}
where $x_0$ is defined in Proposition \ref{premdist}. Furthermore, the definition of $a$ $\eqref{defa}$ and Proposition \ref{propqprimeinfstrict0} implies that:
\begin{equation} \label{aneq0}
a(s)< 0, \forall s \in  (q,Q(x_0)).
\end{equation}
Therefore, the application of the results in Section 12.1 of \cite{ince1927ordinary} to \eqref{eqa} yields that for every $s\in  (q,Q(x_0))$, we can analytically extend $a$ to a complex neighbourhood of $s$. \\

We can now use $a$ to determine the variation of $R$ with respect to $\mu$.
\begin{proposition} \label{nocut}
 Fix $\mu_1$ and $\mu_2$ such that $-\mu_0<\mu_1<\mu_2$. Set $r=R(\mu_1) \wedge R(\mu_2)$ and define $\phi(s):=a(s,\mu_1)-a(s,\mu_2)$ for $s \in[q, r)$. The function $\phi$ is positive on $(q,r)$ and increasing on $(1,r)$. Therefore, $R$ is a non-decreasing function on $\mathbb{R}$ and more specifically an increasing function at each $\mu$ such that $R(\mu) \in (1,R_G)$.
\end{proposition}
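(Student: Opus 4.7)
The plan is to turn the ODE \eqref{eqa} satisfied by $a$ into a first-order ODE for $\phi$. Since $a_i(s) < 0$ on its domain by \eqref{aneq0}, \eqref{eqa} rewrites as $a_i'(s) = -2\mu_i - 2\beta(G(s)-s)/a_i(s)$, and subtracting yields
\begin{equation*}
\phi'(s) \;=\; 2(\mu_2 - \mu_1) \;+\; \frac{2\beta(G(s)-s)}{a_1(s)\,a_2(s)}\,\phi(s), \qquad s \in (q,r).
\end{equation*}
Note that $a_1(s)\,a_2(s) > 0$ on $(q,r)$, and $G(s)-s$ is $\leq 0$ on $(q,1]$ and $> 0$ on $(1,r)$.

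Next I read off the boundary behaviour at the singular endpoint $s=q$. By Theorem \ref{thequiv}, $Q(x)-q \sim k e^{-\tilde\lambda x}$ and $Q'(x) \sim -k\tilde\lambda e^{-\tilde\lambda x}$ as $x \to +\infty$, whence $a(s,\mu) \sim -\tilde\lambda(\mu)(s-q)$ as $s \to q^+$. Because $q$ is the smallest fixed point of the convex generating function $G$ with $G'(1) = m > 1$, we have $1 - G'(q) > 0$, so $\mu \mapsto \tilde\lambda(\mu) = \mu + \sqrt{\mu^2 + 2\beta(1 - G'(q))}$ is strictly increasing. Therefore $\phi(s)/(s-q) \to \tilde\lambda(\mu_2) - \tilde\lambda(\mu_1) > 0$, and $\phi > 0$ on a right-neighbourhood of $q$. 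A first-zero argument then extends this to all of $(q,r)$: if $s^\star \in (q,r)$ were the smallest zero of $\phi$, the displayed ODE together with $\phi(s^\star) = 0$ would force $\phi'(s^\star) = 2(\mu_2 - \mu_1) > 0$, contradicting the fact that $\phi$ reaches $0$ from above. With $\phi > 0$ in hand, the same ODE yields $\phi'(s) > 2(\mu_2 - \mu_1) > 0$ on $(1,r)$, so $\phi$ is strictly increasing there.

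For the monotonicity of $R$, suppose $R(\mu_1) > R(\mu_2)$. Then $r = R(\mu_2) < R(\mu_1) \leq R_G$, so Theorem \ref{thcaractRwrtmu} together with Proposition \ref{premdist} forces $a_2(r^-) = 0$, while $a_1(r^-) < 0$ by Proposition \ref{propqprimeinfstrict0}, producing $\phi(r^-) < 0$ and contradicting the positivity just established. Hence $R$ is non-decreasing. If in addition $R(\mu_1) = R(\mu_2) \in (1, R_G)$, then both $a_i$ vanish at $r^-$, so $\phi(r^-) = 0$; but $\phi$ is strictly increasing on $(1,r)$ and $\phi(1) > 0$, a contradiction. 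The main obstacle I anticipate is exactly the analysis at the singular endpoint $s = q$, where the coefficient $(G(s)-s)/(a_1 a_2)$ in the ODE blows up because both $a_i$ vanish; the refined asymptotic \eqref{equivalent2} from Theorem \ref{thequiv} is what makes the positivity argument start, and without it the ODE alone would be insufficient to seed the induction.
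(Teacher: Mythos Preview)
Your proof is correct and follows essentially the same route as the paper's: both derive the difference relation $\phi'(s) = 2(\mu_2-\mu_1) + 2\beta(G(s)-s)\phi(s)/(a_1 a_2)$ from \eqref{eqa}, seed positivity near $s=q$ via the asymptotic $a(s,\mu)\sim -\tilde\lambda(\mu)(s-q)$ obtained from Theorem~\ref{thequiv}, run a first-zero argument, and then read off the monotonicity of $R$ from the vanishing of $a_i$ at $R(\mu_i)$ when $R(\mu_i)<R_G$. The only cosmetic difference is that you write the ODE for $\phi$ explicitly as a linear first-order equation, whereas the paper works with the equivalent difference formula \eqref{difftwoa}.
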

\begin{proof}
We will prove by contradiction that $\phi$ is positive. Let us define 
\begin{equation}
H:=\lbrace s \in (q,r), a(s,\mu_1)\leq a(s,\mu_2) \rbrace
\end{equation} 
and suppose that $H$ is non-empty. We can then define $h:= \inf H<+\infty$. The functions $a(\cdot ,\mu_1)$ and $a(\cdot ,\mu_2)$ are continuous on $(q,r)$ and 
$$\lim_{s \rightarrow q^+} a(s ,\mu_1)=\lim_{s \rightarrow q^+} a(s ,\mu_2)=0,$$
by definition of $a$ \eqref{defa}. Hence, $h \in H \cup \lbrace q \rbrace$. We now need an equivalent of $a$ when $s$ goes to $q$. For $s>q$, fix $x=Q^{-1}(s)$. We recall that for $\mu \in \mathbb{R}$, we define $\tilde{\lambda}$ by $\tilde{\lambda}=\sqrt{2\beta(1-G^{\prime}(q))+\mu^2}+\mu$. With the help of \eqref{equivalent}, we get:
\begin{align*}
Q(x)&=s \\
q+Ce^{-\tilde{\lambda}x}+\underset{x\rightarrow+\infty}{o}\left(e^{-\tilde{\lambda}x}\right)&=s\\
-\tilde{\lambda}x+\log(C)+\log\left(1+\underset{x\rightarrow+\infty}{o}\left(e^{-\tilde{\lambda}x}\right)\right)&=\log(s-q) \\
Q^{-1}(s)=x&=-\frac{\log(s-q)}{\tilde{\lambda}}+\frac{\log(C)}{\tilde{\lambda}}+\underset{x\rightarrow+\infty}{o}(1).
\end{align*}
Similarly, using  \eqref{equivalent2}, we finally obtain:
\begin{equation}
a(s)=Q^{\prime}(Q^{-1}(s))\sim -\tilde{\lambda}(s-q) \mbox{ as } s\downarrow q.
\end{equation}
Furthermore, $\tilde{\lambda}$ is increasing with respect to $\mu$ and thus there exists a neighbourhood $V$ of $q$ such that:
\begin{equation}\label{suppresq}
a(s,\mu_1)> a(s,\mu_2), \forall s\in V\cap (q,1),
\end{equation} 
and thus $h>q$. This fact and the fact that $a(\cdot,\mu_1)$ and $a(\cdot,\mu_2)$ are continuous imply in particular:
\begin{equation}\label{ahmu1mu2}
a(h,\mu_1)= a(h,\mu_2).
\end{equation} 
By recalling \eqref{aneq0}, we know that $a(s,\mu_1)\neq 0$ and $a(s,\mu_2)\neq 0$, $\forall s \in (q,r)$. Equation $\eqref{eqa}$  thus implies that for $s \in (q,r)$:
\begin{equation}\label{difftwoa}
a^{\prime}(s,\mu_1)-a^{\prime}(s,\mu_2)=2(\mu_2-\mu_1)-2\beta\left(\frac{1}{a(s,\mu_1)}-\frac{1}{a(s,\mu_2)}\right)\left(G(s)-s\right).
\end{equation}
In particular taking $s=h$ in $\eqref{difftwoa}$, we have by using \eqref{ahmu1mu2}:
\begin{equation}\label{difftwoh}
a^{\prime}(h,\mu_1)-a^{\prime}(h,\mu_2)=2(\mu_2-\mu_1)>0.
\end{equation}
Equations \eqref{ahmu1mu2} and \eqref{difftwoh} and the fact that $h>q$ yield that there exists $\epsilon>0$ such that $a(s,\mu_1)\leq a(s,\mu_2), \; \forall s\in (h-\epsilon,h)$, which contradicts the definitions of $H$ and $h$. Hence $H$ is empty and: 
\begin{equation} \label{a1greata2}
a(s,\mu_1)>a(s,\mu_2), \; \forall s \in (q,r).
\end{equation} 
We have thus proved that $\phi$ is positive on $(q,r)$. Furthermore, since $G(s)> s, \forall s \in (1,R_G)$, Equation \eqref{difftwoa} yields that $\phi^{\prime}>0$ on $(1,r)$. \\

Let us now focus on $\mu \mapsto R(\mu)$. If $R(\mu_2)=R_G$, we know by Proposition \ref{firstboundradius} that $R(\mu_1)\leq R(\mu_2)$. 

Now, suppose that $R(\mu_2)<R_G$. Proposition \ref{premdist} yields $Q^{\prime}(x_0(\mu_2),\mu_2)=a(R(\mu_2),\mu_2)=0$.  Let us first prove by contradiction that $R(\mu_2)\geq R(\mu_1)$.  If $R(\mu_1)> R(\mu_2)$, then $a(R(\mu_2),\mu_1)$ is well-defined. Furthermore, by taking the left-limit when $s$ goes to $r=R(\mu_2)$ in Equation \eqref{a1greata2}, we get that 
\begin{equation}\label{armu2mu1geqmu2mu2}
a(R(\mu_2),\mu_1)\geq a(R(\mu_2),\mu_2)=0.
\end{equation}
 Since $R(\mu_1)> R(\mu_2)$, Equation \eqref{armu2mu1geqmu2mu2} is in contradiction with \eqref{aneq0}. Hence,  $R(\mu_1)\leq R(\mu_2)<R_G$. 
 
 Let us now prove that  $R(\mu_1)< R(\mu_2)$. Since $R(\mu_1)<R_G$, Proposition \ref{premdist} yields $a(R(\mu_1),\mu_1)=0$. Moreover, the increase of $\phi$ implies: 
\begin{equation}
-a(R(\mu_1),\mu_2)=a(R(\mu_1),\mu_1)-a(R(\mu_1),\mu_2)>a(1,\mu_1)-a(1,\mu_2)>0.
\end{equation}
Consequently,  $R(\mu_1)$ cannot be equal to $R(\mu_2)$ and thus $R(\mu_1)< R(\mu_2)$.
\end{proof}
We will now prove the continuity of $R$. As a first step, we will prove the continuity of $X(x,\cdot )=(Q(x,\cdot ),Q^{\prime}(x,\cdot ))\in\mathbb{R}^2$ for fixed $x \in \mathbb{R}^+$ by probabilistic methods. In fact, for our purpose, it would be sufficient to show the continuity of $\mu \mapsto Q^{\prime}(0,\mu)$. But if we have the continuity of $Q$, it is simple to establish the continuity of $Q^{\prime}$ with respect to $\mu$. Next, using the fact that $Q$ is solution of KPP, we will extend the continuity of $Q$ to negative half-line and deduce from it the continuity of $R$.
\begin{proposition} \label{lemcontinuQprilme0}
For any $x \geq 0$, the functions $\mu \mapsto Q(x,\mu)$ and  $\mu \mapsto Q^{\prime}(x,\mu)$ (where the derivative is with respect to $x$) are continuous on $\mathbb{R}$.
\end{proposition}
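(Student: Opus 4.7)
The plan is to first prove continuity of $\mu \mapsto Q(x,\mu)$ and then deduce continuity of $\mu \mapsto Q'(x,\mu)$ from the integral identity \eqref{Qprimeequ}.

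For continuity of $Q$, I would realise all BBMs with drift $\mu \in \mathbb{R}$ on a single probability space by fixing a driftless BBM with tree $\mathcal{T}$ and independent Brownian motions $\{B_u\}_{u \in \mathcal{T}}$ along each branch, then defining the drift-$\mu$ trajectory of particle $u$ as $X_u^\mu(t) = B_u(t) + \mu t$ and absorbing at $-x$. Under this coupling, $\mu \mapsto \zeta_x^\mu$ is non-decreasing, so $\mu \mapsto Q(x,\mu)$ is non-increasing and admits one-sided limits $Q(x,\bar\mu^\pm)$ at every $\bar\mu$. For any sequence $\mu_n \to \bar\mu$ I would pass to the limit in the integral equation \eqref{KKPtempsarretx}: the coefficients $\lambda_n$ and $\alpha_n$ depend continuously on $\mu_n$, the integrand $G(Q(y,\mu_n))$ is uniformly bounded by $G(1) = 1$, and since $\alpha_n - |\mu_n|$ stays bounded below by a positive constant uniformly for $\mu_n$ near $\bar\mu$, the kernel is dominated by an exponentially decaying integrable function of $y$. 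Dominated convergence then shows that each one-sided limit $Q(\cdot,\bar\mu^\pm)$ solves \eqref{KKPtempsarretx} at $\mu = \bar\mu$, hence the KPP ODE \eqref{KKPx}, and lies in $[0,1]$ with $Q(0,\bar\mu^\pm) = 1$.

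The main obstacle is establishing the correct boundary condition $Q(\infty,\bar\mu^\pm) = q$ required to apply the uniqueness part of the Harris et al.\ theorem. In the supercritical regime $\bar\mu > -\mu_0$ this follows from the monotonicity bound $Q(x,\bar\mu^-) \le Q(x,\mu')$ for any $\mu' \in (-\mu_0,\bar\mu)$ together with Theorem \ref{thequiv} giving $Q(x,\mu') \to q$ as $x \to \infty$, and similarly for $Q(x,\bar\mu^+)$ via $Q(x,\bar\mu^+) \le Q(x,\bar\mu)$ combined with the universal lower bound $Q(x,\mu) \ge q$. Uniqueness then forces $Q(\cdot,\bar\mu^\pm) = Q(\cdot,\bar\mu)$. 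The critical case $\bar\mu = -\mu_0$ must be handled separately: left-continuity is trivial because $Q \equiv 1$ for $\mu \le -\mu_0$, while right-continuity reduces to showing that the survival probability of the absorbed BBM vanishes as $\mu \downarrow -\mu_0$, which forces $Q(\cdot,-\mu_0^+) \equiv 1$ and is the delicate step.

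Once continuity of $Q$ in $\mu$ is in hand, continuity of $Q'$ follows from \eqref{Qprimeequ}. Taking $x \to \infty$ in that identity and using Theorem \ref{thequiv} to obtain $Q'(x,\mu) e^{2\mu x} \to 0$ (since $2\mu < \tilde\lambda$ and the integrand is $O(e^{(2\mu - \tilde\lambda)y})$), one gets the explicit representation
\begin{equation*}
Q'(0,\mu) = 2\beta \int_0^{+\infty} e^{2\mu y}\bigl(G(Q(y,\mu)) - Q(y,\mu)\bigr)\,\mathrm{d}y.
\end{equation*}
Pointwise continuity of $\mu \mapsto Q(y,\mu)$ combined with dominated convergence yields continuity of $\mu \mapsto Q'(0,\mu)$, and reading \eqref{Qprimeequ} again gives continuity of $\mu \mapsto Q'(x,\mu)$ for every $x \ge 0$.
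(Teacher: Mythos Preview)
Your approach is correct in spirit but genuinely different from the paper's. The paper argues purely probabilistically: it fixes the driftless BBM and moves the linear barrier, then shows directly that the events $\mathcal{L}_{\mu_1}=\bigcap_{\mu<\mu_1}K_\mu\cap K_{\mu_1}^c$ and $\mathcal{R}_{\mu_1}=\bigcap_{\mu>\mu_1}K_\mu^c\cap K_{\mu_1}$ are null. Left-continuity comes from a Fatou argument bounding $\mathbb{E}[Z_{x,\mu}\mathbf{1}_{\{\zeta<\infty\}}]$ via \eqref{Qprimeequ}, and right-continuity from a many-to-one lemma showing each killed trajectory must cross slightly below the barrier. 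Your route instead passes to monotone limits in the integral equation \eqref{KKPtempsarretx} and invokes the uniqueness part of the Harris et~al.\ theorem. This is cleaner when it applies, and your boundary-condition check for $\bar\mu>-\mu_0$ is fine.

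Two points deserve attention. First, you flag right-continuity at $\bar\mu=-\mu_0$ as ``the delicate step'' without resolving it. It can be closed within your framework: $Q(\cdot,-\mu_0^+)$ is a $[q,1]$-valued, non-increasing solution of \eqref{KKPx} at $\mu=-\mu_0$ with value $1$ at $0$; its limit at infinity must be a fixed point of $G$, hence $q$ or $1$, and the non-existence clause of the Harris et~al.\ theorem excludes $q$, forcing $Q(\cdot,-\mu_0^+)\equiv 1$. Second, for $Q'$ your formula $Q'(0,\mu)=2\beta\int_0^\infty e^{2\mu y}(G(Q(y,\mu))-Q(y,\mu))\,\mathrm{d}y$ is correct, but the domination is not as immediate as you suggest: one needs a bound on $Q(y,\mu)-q$ uniform for $\mu$ in compacta, since $e^{2\mu y}$ is not integrable on its own. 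The paper instead derives $Q'(0,\mu)$ directly from \eqref{KKPtempsarretx} (its formula \eqref{Qprime0}), where the kernel $e^{-(\alpha-\mu)y}$ is integrable and $G(Q(y,\mu))\le 1$ gives a trivial dominating function; you could borrow that step to finish cleanly.
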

We recall we suppose throughout this section that $R_G>1$ (which implies in particular that $m<\infty$ and that the process cannot explode in finite time). Although Proposition \ref{lemcontinuQprilme0} holds in general, this assumption allow us to avoid unnecessary technical complications.
\begin{proof}
Fix $x_1>0$. To prove the continuity of $Q$ and $Q^{\prime}$ with respect to $\mu$ it is easier to consider a branching Brownian motion starting from 0 without drift killed on the barrier:
\begin{equation}
\gamma_{x_1,\mu}=\lbrace(t,x)\in \mathbb{R}^2, x=-\mu t-x_1\rbrace.
\end{equation} 
rather than a branching Brownian motion with drift $\mu$ and killed at $-x_1$. We can thus move the barrier by changing $\mu$ for a fixed $\omega \in \mathcal{T}$. \\

Let us fix some notations. We call $\mathcal{N}_t$ the set of particles alive at time $t$ without killing and for $\mu \in \mathbb{R}$, we call $\mathcal{S}^{\mu}_t$ the set of particles stopped on $\gamma_{x_1,\mu}$ at time $t$,  $\mathcal{A}^{\mu}_t$ the set of particles alive for the branching Brownian motion with killing on $\gamma_{x_1,\mu}$ and $Z_{x_1,\mu}:=|\mathcal{S}^{\mu}_t|$, that is the number of particles killed on $\gamma_{x_1,\mu}$. For $u\in \mathcal{N}_t$ and $s\leq t$, we call $X_u(s)$ the position of the ancestor of $u$ alive at time $s$. We denote by $K_{\mu}$ the event $\lbrace \zeta_{x_1,\mu}<+\infty\rbrace$ which means "All particles are killed on $\gamma_{x_1,\mu}$". We thus have $Q(x_1,\mu)=\mathbb{P}(K_{\mu})$. The function $\mu \mapsto Q(x_1,\mu)$ is non-increasing because if $\mu_1 \leq \mu_2$ then $K_{\mu_2} \subset K_{\mu_1}$.
Therefore, $\mu \mapsto Q(x_1,\mu)$ has a left-limit and right-limit at every point. \\

We temporarily suppose  that $p_0=G(0)=0$. To prove the continuity of $\mu \mapsto Q(x_1,\mu)$ let us start by proving its left-continuity.  The left-continuity for $\mu\leq -\mu_0$ is obvious, since for this range of $\mu$, $Q(x,\mu)=1$, $\forall x \in \mathbb{R}^+$.
Suppose that  $\mu \mapsto Q(x_1,\mu)$ is not left-continuous for a $\mu_1 >-\mu_0$, which is equivalent to the fact that:
\begin{equation}
\mathcal{L}_{\mu_1}:=\bigcap_{\mu<\mu_1}K_{\mu}\cap K^{c}_{\mu_1}=\bigcap_{\mu<\mu_1, \mu \in \mathbb{Q}}K_{\mu}\cap K^{c}_{\mu_1}.
\end{equation} 
happens with non-zero probability (the second equality ensures that $\mathcal{L}_{\mu_1}$ is measurable). Fix $\omega \in \mathcal{L}_{\mu_1}$. Since we have supposed that $G(0)=0$, the function $\mu \mapsto Z_{x_1,\mu}(\omega)$ is non-decreasing on $(-\infty,\mu_1)$. It thus has  a right-limit when $\mu$ goes to $\mu_1$, $l(\omega)\in \mathbb{R} \cup \lbrace +\infty \rbrace$. We will first prove that this limit is infinite. Fix $M\in \mathbb{N}$. On $K^{c}_{\mu_1}$ (and consequently on $\mathcal{L}_{\mu_1}$) we know that almost surely the number of particles in  $\mathcal{A}^{\mu_1}_t$ increases to infinity (see for instance \cite{kesten1978branching}) as $t$ tends to infinity. Therefore, for almost every $\omega \in \mathcal{L}_{\mu_1}$ there exists $t(\omega)>0$ such that $N_{x_1,\mu_1}(t(\omega))$, the number of particles alive at time $t(\omega)$, is larger than $M$. Now, fix:
\begin{equation}
\epsilon:=\inf_{u \in \mathcal{A}^{\mu_1}_t} \lbrace X_u(s)+\mu_1 s +x_1, s\leq t(\omega)\rbrace \mbox{ and } \mu_2(\omega):=\mu_1-\frac{\epsilon}{2t(\omega)}.
\end{equation}
$\epsilon$ is the infimum of a finite number of strictly positive continuous functions and thus is strictly positive.  $\mu_2(\omega)$ has been chosen such that $\mu_2(\omega)<\mu_1$ and such that every particle which have not been killed on $\gamma_{x_1,\mu_1}$ before $t(\omega)$ is not killed on $\gamma_{x_1,\mu_2(\omega)}$ before $t(\omega)$ either. Since $G(0)=0$ and $\omega \in \mathcal{L}_{\mu_1} \subset K_{\mu_2(\omega)}$, we will necessarily have $Z_{x_1,\mu_2(\omega)}(\omega) \geq N_{x_1,\mu_1}(t(\omega))\geq M$ and thus $l(\omega) \geq M$. As $M$ is arbitrary, we have for almost every $\omega \in \mathcal{L}_{\mu_1}$: 
\begin{equation}\label{limZmuleftQ}
\lim_{\mu \rightarrow \mu_1^-}Z_{x_1,\mu}(\omega)=+\infty.
\end{equation}
The fact that $\mathbb{P}(\mathcal{L}_{\mu_1})>0$ and \eqref{limZmuleftQ} imply that:
\begin{equation}\label{L1Zinf}
\mathbb{E}\left[\liminf_{\mu \rightarrow \mu_1^-} \mathbf{1}_{\mathcal{L}_{\mu_1}}Z_{x_1,\mu}\right]=+\infty.
\end{equation}
 Furthermore, by Fatou's lemma we get:
\begin{align}
\mathbb{E}\left[\liminf_{\mu \rightarrow \mu_1^-} \mathbf{1}_{\mathcal{L}_{\mu_1}}Z_{x_1,\mu}\right]&\leq \liminf_{\mu \rightarrow \mu_1^-} \mathbb{E}\left[\mathbf{1}_{\mathcal{L}_{\mu_1}}Z_{x_1,\mu}\right] \label{firstineqleftcontQ}\\
&\leq \liminf_{\mu \rightarrow \mu_1^-} \mathbb{E}\left[\mathbf{1}_{\lbrace \zeta_{x_1,\mu}<+\infty \rbrace}Z_{x_1,\mu}\right] \label{subsetineqleftcontQ}\\
&\leq \liminf_{\mu \rightarrow \mu_1^-} \frac{Q^{\prime}(x_1,\mu)}{Q^{\prime}(0,\mu)}. \label{fracleftcontQ}
\end{align}
Inequality \eqref{subsetineqleftcontQ} comes from the definition of $\mathcal{L}_{\mu_1}$ which implies that for any $\mu<\mu_1$, $\mathcal{L}_{\mu_1} \subset \lbrace \zeta_{x,\mu}<+\infty \rbrace$ and \eqref{fracleftcontQ} is obtained by the differentiation of $f_{x_1}$ with respect to $s$ at $1$ and from \eqref{linkomegQ}.
We recall that for $x\geq 0$, $q \leq Q(x)\leq 1$ and for $s \in (q,1)$, $G(s)\leq s$. Therefore, for any $\mu>-\mu_0$, \eqref{Qprimeequ} yields:
\begin{equation}
\frac{Q^{\prime}(x_1,\mu)}{Q^{\prime}(0,\mu)}\leq e^{-2\mu x_1}.
\end{equation}
The left-hand of \eqref{firstineqleftcontQ} is thus bounded by $e^{-2\mu_1 x_1}$, which contradicts  \eqref{L1Zinf}. Hence, $\mathbb{P}(\mathcal{L}_{\mu_1})=0$ and consequently $\mu \mapsto Q(x_1,\mu)$ is left-continuous.\\

Suppose now that $\mu \mapsto Q(x_1,\mu)$ is not right-continuous at $\mu_1 \geq -\mu_0$. That implies that the event $\mathcal{R}_{\mu_1}$, defined by:
\begin{equation}
\mathcal{R}_{\mu_1}:=\bigcap_{\mu>\mu_1}K^{c}_{\mu}\cap K_{\mu_1}=\bigcap_{\mu>\mu_1,\mu\in \mathbb{Q}}K^{c}_{\mu}\cap K_{\mu_1}
\end{equation} 
happens with positive probability.  We define for $u\in \mathcal{N}_t$: $Y_u(t)=X_u(t)+\mu_1 t+x_1$ and $ \tau_u=\inf \lbrace s\leq t, Y_u(s)=0 \rbrace$. Furthermore, we call $H$ the event:

\begin{equation}
H:=\underset{n\in \mathbb{N}^*}{\bigcap}\underset{\; \; u\in \mathcal{S}^{\mu_1}_n}{\bigcap}  \underset{\; s \in (0,n-\tau_u)\cap \mathbb{Q}  }{\bigcup}  \lbrace Y_u(\tau_u+s)<0\rbrace.
\end{equation}
 Let us briefly show that $\mathbb{P}(H^c)=0$. Let $B$ be a Brownian motion and $\tau=\inf \lbrace s \in \mathbb{R}^+, B_s=-\mu_1 s-x_1 \rbrace$. A Brownian motion cannot stay above a barrier after reaching this barrier and the many-to-one lemma (see for instance Theorem 8.5 of \cite{hardy2006new}) will ensure that none particle of the branching Brownian motion can do it. More formally, we have:
\begin{align}
\mathbb{P}(H^c)&=\mathbb{P}\left(\underset{n\in \mathbb{N}^*}{\bigcup}\underset{\; \; u\in \mathcal{S}^{\mu_1}_n}{\bigcup}  \underset{\; s \in (0,n-\tau_u)\cap \mathbb{Q} }{\bigcap}  \lbrace Y_u(\tau_u+s)\geq 0\rbrace \right) \nonumber \\
&\leq \sum_{n=1}^{+\infty}\mathbb{E}\left(\sum_ {u\in \mathcal{S}^{\mu_1}_n}  \mathbf{1}_{\lbrace Y_u(\tau_u+s )\geq 0, \; \forall s  \in (0,n-\tau_u)\cap \mathbb{Q} \rbrace} \right) \nonumber\\
&\leq \sum_{n=1}^{+\infty}  e^{-\beta (m-1) n} \mathbb{P}\left(\tau<n ; Y_{\tau+s}\geq 0, \; \forall s  \in (0,n-\tau)\cap \mathbb{Q} \right) \label{Hcineg3} \\
&\leq 0 \label{Hcineg4},
\end{align} 
where $Y$ is a Brownian motion with drift $\mu_1$ starting from $x_1$. Inequality \eqref{Hcineg3} is just many-to-on lemma and inequality  \eqref{Hcineg4} comes from the strong Markov property.

We now fix $n=\lfloor \zeta_{x_1,\mu_1} \rfloor +1$. Since $n\geq \zeta_{x_1,\mu_1}$ and $\mathbb{P}(H)=1$, we have on $\mathcal{R}_{\mu_1}$ that for all $u \in \mathcal{N}_n$ there exists $s_u \in (0,n)$ such that $Y_u(s_u)<0$. The fact that $\mathcal{N}_n$ is finite implies there exists $\epsilon>0$ such that $Y_u(s_u)\leq -\epsilon, \forall u \in \mathcal{N}_n$. If we take $\mu=\mu_1+\frac{x_1}{n}+\frac{\epsilon}{2n}$ then each particle of $\mathcal{N}_n$ reaches $\gamma_{x_1,\mu}$ before $n$, which means that the process dies on  $\gamma_{x_1,\mu}$. This is in contradiction with the definition of $\mathcal{R}_{\mu_1}$. Therefore, 
\begin{equation}
\mathbb{P}\left(\mathcal{R}_{\mu_1}\right)=0.
\end{equation}
We have proved that $\mu \mapsto Q(x_1,\mu)$ is also right-continuous and thus continuous in the case where $G(0)=0$. The argument of the proof of Theorem $\ref{thequiv}$, which consists of looking the tree of prolific individuals can again be applied to prove the result in the general case. \\

Let us now prove the continuity of $\mu \mapsto Q^{\prime}(x,\mu)$ for any $x>0$. We can deduce from \eqref{KKPtempsarretx} that:
\begin{equation}\label{Qprime0}
Q^{\prime}\left(0,\mu\right)=2 \alpha\int_0^{+\infty}e^{-(\alpha-\mu)y}G(Q(y,\mu))\mathrm{d}y-\lambda,
\end{equation}
where we recall that $\alpha=\sqrt{\mu^2+2\beta}$ and $\lambda=\mu+\alpha$. Let $K$ be a compact subset of $\mathbb{R}$.  For any $\mu \in K$ and for any $y \geq 0$, we have:
\begin{equation}
e^{-(\alpha-\mu)y}G(Q(y,\mu))\leq e^{-Cy},
\end{equation} 
where $\displaystyle C=\min_{\mu \in K}\lbrace \alpha-\mu \rbrace>0$. Moreover, we know that  $\mu \mapsto Q(x,\mu)$ is continuous and, as a composition of continuous functions, $\mu \mapsto e^{-(\alpha-\mu)y}G(Q(y,\mu))$ is also continuous. Therefore, $\mu \mapsto Q^{\prime}\left(0,\mu\right)$ is continuous. Similarly, with the help of \eqref{Qprimeequ}, we can easily prove  that $\mu \mapsto Q^{\prime}(x,\mu)$ is continuous.

\end{proof}
The continuity of $Q$ with respect to $\mu$ will be useful to prove the continuity of $R$. Before proving this continuity, we just prove the right-continuity of $R$ in $-\mu_0$.
\begin{lemma}\label{continuitymu0}
\begin{equation}
\lim_{\mu \rightarrow -\mu_0}  R(\mu)=1.
\end{equation}
\end{lemma}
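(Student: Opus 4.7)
The plan is to argue by contradiction using an energy identity associated to the KPP ODE. Proposition~\ref{nocut} gives that $R$ is non-decreasing on $\mathbb{R}$, and since $R \equiv 1$ on $(-\infty,-\mu_0]$ (as recalled in Proposition~\ref{firstboundradius}), both one-sided limits at $-\mu_0$ exist and the left-limit equals $1$. It therefore suffices to rule out $L := \lim_{\mu \downarrow -\mu_0} R(\mu) > 1$. Assuming $L>1$, fix $L' \in (1,L)$; necessarily $L' < R_G$ since $L \leq R_G$. For every $\mu > -\mu_0$ close enough to $-\mu_0$ one has $R(\mu) = Q(x_0(\mu),\mu) > L'$ by Theorem~\ref{thcaractRwrtmu}, so by the continuity and strict monotonicity of $Q(\cdot,\mu)$ on $J$ (Proposition~\ref{propqprimeinfstrict0}) the intermediate value theorem yields a unique $y(\mu) \in (x_0(\mu),0)$ with $Q(y(\mu),\mu) = L'$.

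Multiplying the KPP equation \eqref{KKPx} by $Q'$ and integrating, I obtain $E'(x) = -\mu\, Q'(x,\mu)^2$, where
\[
E(x) := \tfrac{1}{4}\, Q'(x,\mu)^2 + \beta\, \Phi(Q(x,\mu)), \qquad \Phi(u) := \int_0^u (G(s)-s)\, \mathrm{d}s.
\]
Integrating this identity from $y(\mu)$ to $0$ and using $-\mu > 0$ for $\mu$ near $-\mu_0 < 0$, I get $E(0) \geq E(y(\mu))$, which rearranges, since $Q(0,\mu)=1$ and $Q(y(\mu),\mu)=L'$, to
\[
\tfrac{1}{4}\, Q'(0,\mu)^2 \;\geq\; \tfrac{1}{4}\, Q'(y(\mu),\mu)^2 + \beta\bigl(\Phi(L')-\Phi(1)\bigr) \;\geq\; \beta \int_1^{L'} (G(s)-s)\, \mathrm{d}s \;=:\; c,
\]
and $c>0$ because $G(s)>s$ on $(1,R_G)$ and $1<L'<R_G$.

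To conclude, Proposition~\ref{lemcontinuQprilme0} ensures $\mu \mapsto Q'(0,\mu)$ is continuous on all of $\mathbb{R}$, and at $\mu = -\mu_0$ the BBM dies almost surely so $Q(\cdot,-\mu_0) \equiv 1$ and hence $Q'(0,-\mu_0) = 0$. Letting $\mu \downarrow -\mu_0$ in the $\mu$-uniform lower bound above yields $0 \geq c > 0$, a contradiction. Therefore $L=1$. The most delicate point is that the positivity of the right-hand side in the energy inequality hinges on $-\mu > 0$, which is precisely why the argument is tailored to the limit $\mu \downarrow -\mu_0$; the rest is essentially bookkeeping using the two key tools already in place, namely the characterization $R(\mu) = Q(x_0(\mu),\mu)$ and the continuity of $Q'(0,\cdot)$ at $-\mu_0$.
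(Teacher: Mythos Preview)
Your proof is correct. Both your argument and the paper's hinge on the same two ingredients: the sign structure of the KPP equation when $\mu<0$, and the continuity of $\mu\mapsto Q'(0,\mu)$ at $-\mu_0$ together with $Q'(0,-\mu_0)=0$. The execution, however, is genuinely different. The paper works directly with the infinitesimal generator $a$: from \eqref{eqa} and $a<0$, $G(s)\geq s$ on $[1,R(\mu)]$, it extracts the pointwise differential inequality $a'(s)\geq -2\mu$, integrates to get the linear lower bound $a(s)\geq -2\mu(s-1)+Q'(0,\mu)$, and concludes that $a$ must vanish before $s_1(\mu):=1+Q'(0,\mu)/(2\mu)$; hence $R(\mu)\leq s_1(\mu)\to 1$. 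You instead build the energy functional $E(x)=\tfrac14 Q'(x)^2+\beta\Phi(Q(x))$, observe $E'=-\mu(Q')^2\geq 0$, and deduce a uniform lower bound $|Q'(0,\mu)|^2\geq 4\beta\int_1^{L'}(G-\,\mathrm{id})>0$, which collides with $Q'(0,\mu)\to 0$. The paper's route is slightly more quantitative (it gives an explicit upper bound on $R(\mu)$), while your energy argument is a cleaner, more classical ODE device and avoids passing through $a$ altogether. A cosmetic remark: since $R$ is non-decreasing, the condition $R(\mu)>L'$ in fact holds for \emph{all} $\mu>-\mu_0$, not just those close to $-\mu_0$; and the case $R_G=1$ is implicitly covered because then $L\leq R_G=1$ and there is nothing to prove.
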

\begin{proof}
Suppose first that $R_G=1$. In that case, Proposition \ref{firstboundradius} yields    $R(\mu)=1, \; \forall \mu \in \mathbb{R}$ and thus the lemma is proved. Now suppose that $R_G>1$ (which implies that $m<\infty$).
Let $-\mu_0 <\mu<0$. We recall that for $s\in (q,R(\mu))$:
\begin{equation}
a^{\prime}(s)a(s)=-2\mu a(s)-2 \beta( G(s)-s). \tag{\ref{eqa}}
\end{equation}
For $1\leq s \leq R(\mu)$, we have $G(s)\geq s$, $a(s)\leq 0$ and thus $a^{\prime}(s)\geq -2\mu$. By integrating the previous equation, we obtain:
\begin{equation}\label{ineqanear1}
a(s)\geq -2\mu(s-1)+a(1).
\end{equation}
Knowing that $a(1)=Q^{\prime}(0,\mu)\leq 0$, we have that $s_1(\mu):=\frac{Q^{\prime}(0,\mu)}{2\mu}+1$
cancel the right hand side of \eqref{ineqanear1}. By the Intermediate Value Theorem, there is $s_2(\mu)\leq s_1(\mu)$ such that $a(s_2(\mu))=0$ and therefore 
\begin{equation}
1 \leq R(\mu)\leq s_1(\mu). \label{aleqrmuleqs1}
\end{equation} 
We have by Proposition \ref{lemcontinuQprilme0} that:
\begin{equation}
\lim_{\mu \rightarrow-\mu_0 }Q^{\prime}(0,\mu)=0,
\end{equation}
which means that: 
\begin{equation}
\lim_{\mu \rightarrow-\mu_0}s_1(\mu)=1.\label{aleqrmuleqs12}
\end{equation}
Equations \eqref{aleqrmuleqs1} and \eqref{aleqrmuleqs12} finally provide:
\begin{equation}
\lim_{\mu \rightarrow-\mu_0}R(\mu)=1.
\end{equation} 
\end{proof}
Note that \eqref{ineqanear1} implies that $Q^{\prime}(x_0(\mu),\mu)>-\infty$.
We now can more generally prove that the radius of convergence $R(\mu)$ is continuous on $R^{-1}([1,R_G))$.
\begin{lemma} \label{continuityR}
$\mu \mapsto R(\mu)$ is continuous on $R^{-1}([1,R_G))$.
\end{lemma}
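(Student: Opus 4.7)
The plan is to exploit the dynamical systems picture that is already in place: view $X(x,\mu)=(Q(x,\mu),Q'(x,\mu))$ as the solution of the autonomous system $X'=\Gamma(X,\mu)$ from \eqref{deff}, with initial data $X(0,\mu)=(1,Q'(0,\mu))$ that, by Proposition \ref{lemcontinuQprilme0}, depends continuously on $\mu$. Because $\Gamma$ is smooth in $X$ on the strip $\{(u,v):|u|<R_G\}$ and continuous in $\mu$, the standard theorem on continuous dependence on initial conditions and parameters gives joint continuity of $(x,\mu)\mapsto X(x,\mu)$ on any compact subset of its domain of existence in which the first coordinate stays in $(-R_G,R_G)$. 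Together with Theorem \ref{thcaractRwrtmu} (which identifies $R(\mu)=Q(x_0(\mu),\mu)$) and Proposition \ref{premdist}, this will let me show that $x_0(\mu)$ varies continuously.

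Before doing that I would dispose of the easy cases. If $R_G=1$ then $R\equiv1$ by Proposition \ref{firstboundradius} and there is nothing to prove; if $\mu^*\le-\mu_0$ then $R\equiv 1$ on $(-\infty,-\mu_0]$ and Lemma \ref{continuitymu0} supplies right-continuity at $-\mu_0$. So fix $\mu^*>-\mu_0$ with $R(\mu^*)\in[1,R_G)$. I would first note that in fact $R(\mu^*)>1$: by Proposition \ref{premdist}, $x_0(\mu^*)<0$ strictly (otherwise $Q'(0,\mu^*)=0$ would force $Q\equiv 1$ by Cauchy--Lipschitz), and since $Q$ is decreasing on $(x_0(\mu^*),0)$ we get $R(\mu^*)=Q(x_0(\mu^*),\mu^*)>Q(0,\mu^*)=1$. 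Then by Proposition \ref{premdist}, $Q'(x_0(\mu^*),\mu^*)=0$, and the KPP equation \eqref{KKPx} evaluated at $x_0(\mu^*)$ gives
\begin{equation*}
Q''(x_0(\mu^*),\mu^*)=-2\beta\bigl(G(R(\mu^*))-R(\mu^*)\bigr)<0,
\end{equation*}
because $R(\mu^*)\in(1,R_G)$ and $G(s)>s$ on $(1,R_G)$. Hence $Q(\cdot,\mu^*)$ has a strict local maximum at $x_0(\mu^*)$.

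The remainder of the argument is then a soft continuity-plus-IVT step. Fix $\epsilon>0$ small enough that $Q'(x_0(\mu^*)+\epsilon,\mu^*)<0$ and $Q'(x_0(\mu^*)-\epsilon,\mu^*)>0$, and that $Q(\cdot,\mu^*)$ stays in a compact subinterval of $(-R_G,R_G)$ on $[x_0(\mu^*)-\epsilon,0]$. By the continuous-dependence theorem, there exists $\delta>0$ such that for every $\mu$ with $|\mu-\mu^*|<\delta$ the trajectory $X(\cdot,\mu)$ is defined on $[x_0(\mu^*)-\epsilon,0]$, stays close to $X(\cdot,\mu^*)$ uniformly on this interval, and in particular $Q'(x_0(\mu^*)+\epsilon,\mu)<0$, $Q'(x_0(\mu^*)-\epsilon,\mu)>0$, and $Q'(\cdot,\mu)<0$ on $[x_0(\mu^*)+\epsilon,0]$. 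The intermediate value theorem produces a zero $\tilde x_0(\mu)$ of $Q'(\cdot,\mu)$ inside $(x_0(\mu^*)-\epsilon,x_0(\mu^*)+\epsilon)$, while the uniform negativity on $[x_0(\mu^*)+\epsilon,0]$ forces the infimum of the maximal decrease interval $J(\mu)$ to satisfy $x_0(\mu)\le x_0(\mu^*)+\epsilon$; combining the two and using that $Q(\cdot,\mu)$ is increasing immediately left of $\tilde x_0(\mu)$ shows that $x_0(\mu)=\tilde x_0(\mu)$. Letting $\epsilon\to 0$ we conclude $x_0(\mu)\to x_0(\mu^*)$ and therefore $R(\mu)=Q(x_0(\mu),\mu)\to Q(x_0(\mu^*),\mu^*)=R(\mu^*)$ by joint continuity. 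The one delicate point, and the one I would check most carefully, is the identification $x_0(\mu)=\tilde x_0(\mu)$: one must rule out the possibility that $J(\mu)$ extends strictly below $\tilde x_0(\mu)$ for some $\mu$ arbitrarily close to $\mu^*$, which is where the strict sign of $Q''(x_0(\mu^*),\mu^*)$ and the resulting strict local maximum structure is essential.
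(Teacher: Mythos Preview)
Your proposal is correct and follows essentially the same approach as the paper: both arguments invoke continuous dependence of the flow of \eqref{deff} on initial data and parameters (using Proposition \ref{lemcontinuQprilme0} to control $X(0,\mu)$), use $Q''(x_0(\mu^*),\mu^*)<0$ to force a sign change of $Q'(\cdot,\mu)$ near $x_0(\mu^*)$, and then identify the resulting zero with $x_0(\mu)$ via Theorem \ref{thcaractRwrtmu}. Your treatment of the boundary cases ($R_G=1$, $\mu^*\le-\mu_0$, and the observation that $R(\mu^*)>1$ whenever $\mu^*>-\mu_0$) is in fact slightly more explicit than the paper's, which restricts directly to $R(\mu_1)\in(1,R_G)$.
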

Essentially, the key to the proof of Lemma \ref{continuityR} is Proposition \ref{lemcontinuQprilme0} and the continuity of the flow. We give this proof in Appendix. \\

 We will now tackle the last point of this section. As we mentioned in the introduction, whether $R(\mu)=R_G$ or $R(\mu)<R_G$ will be decisive to determine precisely the asymptotic behaviour of $q_n(x)$. We know that $R$ is non-decreasing and bounded by $R_G$ (we recall that $R_G$ can be infinite) and thus has a limit (not necessary finite) smaller or equal to $R_G$. We first show this limit is precisely $R_G$. After that, we will distinguish two cases which will allow us to determine whether there exists $\mu$ such that $R(\mu)=R_G$ or not.
 \begin{proposition}\label{propratteint}
 Let $r\in [1,R_G]$, if $\int_0^{r} G(x) \mathrm{d}x <+\infty $ then there exists $\mu_r$ such that $R(\mu_r)\geq r$.
 \end{proposition}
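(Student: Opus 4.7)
The plan is to argue by contradiction: suppose $R(\mu) < r$ for every $\mu \in \mathbb{R}$. The case $r=1$ is immediate from Proposition \ref{firstboundradius}, so I may assume $r > 1$, which also forces $R_G > 1$ and puts us inside the standing hypothesis of this section. Since $r \leq R_G$, our assumption gives $R(\mu) < R_G$ for every $\mu$, and Proposition \ref{premdist} combined with $a(R(\mu),\mu) = Q^{\prime}(x_0(\mu),\mu)$ (cf.\ \eqref{defa}) then yields $a(R(\mu),\mu) = 0$ for all $\mu$.

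Next I would rewrite \eqref{eqa} as $a(s)a^{\prime}(s) + 2\mu a(s) + 2\beta(G(s)-s) = 0$ and integrate from $s=1$ up to $s \to R(\mu)^-$. Using that $a<0$ on $(q, R(\mu))$ and $a(R(\mu)) = 0$, one obtains the identity
\begin{equation*}
\frac{a(1,\mu)^2}{2} + 2\mu \int_1^{R(\mu)} |a(u,\mu)|\,\mathrm{d}u \;=\; 2\beta \int_1^{R(\mu)}(G(u)-u)\,\mathrm{d}u.
\end{equation*}
All integrals are finite because $R(\mu) < R_G$ keeps $G$ continuous on $[1, R(\mu)]$, and the product $a(s)a^{\prime}(s)$ extends continuously up to $s = R(\mu)$ even though $a^{\prime}(s)$ diverges there, since \eqref{eqa} gives $a(s)a^{\prime}(s) \to -2\beta(G(R(\mu))-R(\mu))$. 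Moreover the right-hand side is bounded above by $2\beta \int_1^r (G(u) - u)\,\mathrm{d}u$, which is finite thanks to the hypothesis $\int_0^r G(x)\,\mathrm{d}x < \infty$.

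To reach a contradiction, I would produce a lower bound on the left-hand side of the identity above that grows linearly in $\mu$. Fix any $\mu_* > -\mu_0$; since $R_G > 1$, the argument in the proof of Proposition \ref{premdist} shows $x_0(\mu_*) < 0$, hence $R(\mu_*) = Q(x_0(\mu_*)) > 1$. Pick $s_1 \in (1, R(\mu_*))$. For every $\mu > \mu_*$, the monotonicity of $R$ (Proposition \ref{nocut}) gives $R(\mu) \geq R(\mu_*) > s_1$, while applying Proposition \ref{nocut} to the pair $(\mu_*, \mu)$ yields $|a(u,\mu)| \geq |a(u,\mu_*)|$ on $(q, R(\mu_*))$. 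Consequently
\begin{equation*}
\int_1^{R(\mu)} |a(u,\mu)|\,\mathrm{d}u \;\geq\; \int_1^{s_1} |a(u,\mu_*)|\,\mathrm{d}u \;=:\; C_0 \;>\; 0,
\end{equation*}
so $2\mu C_0 \leq 2\beta \int_1^r (G(u)-u)\,\mathrm{d}u$, which is impossible for $\mu$ sufficiently large. I expect the main delicate point to be justifying the limit $s \to R(\mu)^-$ in the integrated identity, i.e.\ the right-continuity of $a$ and of the product $a a^{\prime}$ at the endpoint $R(\mu)$, which has to be read off from Proposition \ref{premdist} and \eqref{eqa} themselves.
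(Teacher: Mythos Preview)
Your argument is correct and reaches the same integrated identity as the paper, but you extract the contradiction from a different term. After integrating \eqref{eqa} from $1$ to $s<R(\mu)$ and rearranging, both of you have
\[
a^2(s,\mu)=a(1,\mu)^2-4\mu\int_1^s a(u,\mu)\,\mathrm{d}u-4\beta\int_1^s(G(u)-u)\,\mathrm{d}u.
\]
The paper simply observes that for $\mu\geq 0$ the middle term is nonnegative (since $a<0$), so $a^2(s,\mu)\geq (Q'(0,\mu))^2-4\beta\int_1^r(G(u)-u)\,\mathrm{d}u$, and then uses the explicit formula \eqref{Qprime0} to show $(Q'(0,\mu))^2\to\infty$ as $\mu\to+\infty$; this immediately prevents $a^2(R(\mu),\mu)=0$. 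You instead pass to the limit $s\to R(\mu)$, drop the nonnegative term $a(1,\mu)^2/2$, and force the \emph{middle} term $2\mu\int_1^{R(\mu)}|a(u,\mu)|\,\mathrm{d}u$ to diverge by producing a uniform-in-$\mu$ lower bound $C_0>0$ on the integral via the monotonicity of $|a(\cdot,\mu)|$ from Proposition~\ref{nocut}. Both routes are valid; the paper's is slightly more self-contained (it avoids Proposition~\ref{nocut} and the endpoint limit, working only with $s<R(\mu)$), while yours trades the computation of $\lim_{\mu\to\infty}Q'(0,\mu)$ for a structural comparison argument that you already have available.
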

 Actually, the condition $\int_0^{r} G(x) \mathrm{d}x <+\infty $ is always satisfied for $r<R_G$, but we choose to formulate Proposition \ref{propratteint} in these terms to avoid repetitions.  
 \begin{proof}
Fix $r\in [1,R_G]$ and assume that $\int_0^{r} G(x) \mathrm{d}x <+\infty $. Furthermore, we suppose that for all $\mu\in \mathbb{R}$, $R(\mu)<r$. Let $\mu\geq 0$ and $s<R(\mu)$. By integrating \eqref{eqa}, we get:
\begin{align}
\frac{1}{2}\left(a^2(s,\mu)-a^2(1,\mu)\right)&=-2\mu \int_1^s a(u,\mu)\mathrm{d}u-2\beta\int_1^s\left(G(u)-u\right)\mathrm{d}u \nonumber \\
a^2(s,\mu)&=(Q^{\prime})^2(0,\mu)-4\mu \int_1^s a(u,\mu)\mathrm{d}u-4\beta\int_1^s\left(G(u)-u\right)\mathrm{d}u \nonumber \\
a^2(s,\mu)&\geq(Q^{\prime})^2(0,\mu)-4\beta\int_1^{r} \left(G(u)-u\right)\mathrm{d}u.
\end{align}
We can derive from \eqref{Qprime0} that: $\displaystyle \lim_{\mu \rightarrow +\infty} (Q^{\prime})^2(0,\mu)=+\infty$ and therefore for $\mu$ large enough there is $M>0$ such that: $a^2(s,\mu)>M$ for all $s<R(\mu)$. We cannot then have that $a^2(R(\mu),\mu)=0$, which is in contradiction with the fact that $R(\mu)<r\leq R_G$. 
\end{proof}
From Proposition \ref{propratteint}, we can  obviously derive the following corollary.
\begin{corollary}
$$\lim_{\mu\rightarrow +\infty}R(\mu)=R_G.$$
\end{corollary}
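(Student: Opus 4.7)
The corollary is an essentially immediate consequence of Proposition \ref{propratteint} together with the monotonicity of $R$ established in Proposition \ref{nocut} and the upper bound $R(\mu)\leq R_G$ of Proposition \ref{firstboundradius}. My plan is simply to verify the hypothesis of Proposition \ref{propratteint} for every $r$ strictly below $R_G$ and then let $r\uparrow R_G$.

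Concretely, fix any $r\in[1,R_G)$. Since $r<R_G$, the generating function $G$ is analytic on the disk of radius $r$, in particular continuous on the compact segment $[0,r]$, so
\begin{equation}
\int_{0}^{r} G(x)\,\mathrm{d}x<+\infty.
\end{equation}
Proposition \ref{propratteint} then yields some $\mu_r\in\mathbb{R}$ such that $R(\mu_r)\geq r$. By Proposition \ref{nocut}, the function $\mu\mapsto R(\mu)$ is non-decreasing, hence $R(\mu)\geq r$ for every $\mu\geq\mu_r$. Combining this with the uniform upper bound $R(\mu)\leq R_G$ from Proposition \ref{firstboundradius} gives
\begin{equation}
r\leq \liminf_{\mu\to+\infty} R(\mu)\leq \limsup_{\mu\to+\infty} R(\mu)\leq R_G.
\end{equation}
As $r<R_G$ was arbitrary, letting $r\uparrow R_G$ concludes the proof.

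The only slight subtlety is the case $R_G=+\infty$, but it is handled identically: for any real $r>1$, $G$ is entire and so $\int_0^r G(x)\,\mathrm{d}x<+\infty$, whence $R(\mu)\geq r$ for all $\mu$ large enough, which forces $R(\mu)\to+\infty=R_G$. There is no genuine obstacle: all the work has already been done in Proposition \ref{propratteint}, and this corollary is just the packaging of that result together with monotonicity.
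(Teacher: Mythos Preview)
Your argument is correct and is exactly the approach the paper intends: the paper remarks that the integrability condition of Proposition~\ref{propratteint} is automatic for $r<R_G$, and combines this with the monotonicity of $R$ (Proposition~\ref{nocut}) and the bound $R\le R_G$ (Proposition~\ref{firstboundradius}) to conclude. You have simply spelled out what the paper calls ``obvious''.
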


We now give the proof of Theorem \ref{threachornot} which is a criterion to know whether $R_G$ is reached by $R(\mu)$ or not. In Proposition \ref{propratteint} we have proved the implication:
\begin{equation}\label{eqimpliintGfinimplimucfini}
\int_{0}^{R_G} G(s) \mbox{d}s<+\infty \Longrightarrow\mu_c<\infty,
\end{equation} 
where $\mu_c$ is defined in \eqref{defmuc}. We prove in the following proposition the reciprocate implication.
\begin{proposition}\label{propimpli1}
If $\int_0^{R_G} G(s) \mathrm{d}s =+\infty$ then for all $\mu \in \mathbb{R}$, $R(\mu)<R_G$. 
\end{proposition}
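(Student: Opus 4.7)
The plan is to argue by contradiction. Assume there exists $\mu\in\mathbb{R}$ with $R(\mu)=R_G$; since $\mu\leq -\mu_0$ forces $R(\mu)=1<R_G$, we may restrict to $\mu>-\mu_0$. Under this assumption $a(\cdot,\mu)$ is defined on the whole interval $(q,R_G)$, is strictly negative there by \eqref{aneq0}, and satisfies the ODE \eqref{eqa}. The strategy is to compare the two sides of an integrated form of \eqref{eqa} and show that the hypothesis $\int_0^{R_G}G(s)\,\mathrm{d}s=+\infty$ forces $a^2(s)$ to become negative for $s$ close enough to $R_G$.

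First I would integrate \eqref{eqa} from $1$ to $s\in(1,R_G)$ and rearrange to
\begin{equation*}
a^2(s,\mu)+4\beta\int_1^s G(u)\,\mathrm{d}u \;=\; a^2(1,\mu) - 4\mu\int_1^s a(u,\mu)\,\mathrm{d}u + 2\beta(s^2-1).
\end{equation*}
Then I would control the right-hand side. Dividing \eqref{eqa} by $a(s,\mu)<0$ and using $G(u)>u$ for $u>1$ yields $a'(s,\mu)>-2\mu$, hence $a(s,\mu)\geq a(1,\mu)-2\mu(s-1)$ and therefore $|a(s,\mu)|\leq |a(1,\mu)|+2|\mu|(s-1)$. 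Plugging this into the previous identity shows that the right-hand side is bounded above by a polynomial of degree at most $2$ in $s$, with coefficients depending only on $\mu$, $\beta$ and $a(1,\mu)=Q^{\prime}(0,\mu)$.

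Finally I would invoke $\int_0^{R_G}G(s)\,\mathrm{d}s=+\infty$ to reach a contradiction. If $R_G<+\infty$, then as $s\uparrow R_G$ the right-hand side stays bounded while $4\beta\int_1^s G(u)\,\mathrm{d}u\to+\infty$, which is absurd. If $R_G=+\infty$, then since $m=G^{\prime}(1)>1$ at least one coefficient $p_k$ with $k\geq 2$ is positive, so $\int_1^s G(u)\,\mathrm{d}u\geq p_k(s^{k+1}-1)/(k+1)$ grows at least cubically in $s$ and eventually exceeds any quadratic.

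The step I expect to be the most delicate is the unbounded case $R_G=+\infty$: one must both justify that $a(\cdot,\mu)$ genuinely extends to $(q,+\infty)$ (which follows from $R(\mu)=Q(x_0(\mu))=+\infty$ via Theorem \ref{thcaractRwrtmu}) and extract the super-quadratic lower bound on $\int_1^s G(u)\,\mathrm{d}u$ from the generating-function structure of $G$. The remaining manipulations of \eqref{eqa} are routine given the sign and monotonicity information already established for $a(\cdot,\mu)$.
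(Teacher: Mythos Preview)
Your argument is correct, and it differs from the paper's own proof in a useful way. The paper argues in the $x$-variable: assuming $R(\mu)=R_G$ for some $\mu\geq 0$, it combines the integrated form \eqref{Qprimeequ} with the change of variable $s=Q(y)$ and the exponential lower bound $Q'(Q^{-1}(s))\geq Q'(0)e^{-2\mu Q^{-1}(s)}$ to conclude that $Q'(x)\to+\infty$ as $x\downarrow x_0$, contradicting the decrease of $Q$ on $J$; the range $-\mu_0<\mu<0$ is then obtained a posteriori from the monotonicity of $R(\cdot)$ established in Proposition~\ref{nocut}.

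You instead stay in the $s$-variable and work directly with $a$. Integrating \eqref{eqa} and using the linear bound $|a(s)|\leq |a(1)|+2|\mu|(s-1)$ (which is exactly the estimate behind \eqref{ineqanear1}) gives an identity whose left-hand side contains $4\beta\int_1^s G(u)\,\mathrm{d}u$ while the right-hand side is at most quadratic in $s$. This is essentially the mirror image of the argument the paper uses for Proposition~\ref{propratteint}, run to produce a contradiction rather than a lower bound on $a^2$. Your approach handles all $\mu>-\mu_0$ at once, without appealing to the monotonicity of $R(\cdot)$; the price is that you must treat the cases $R_G<+\infty$ and $R_G=+\infty$ separately and, in the latter, exploit that some $p_k$ with $k\geq 2$ is positive to beat the quadratic growth of the right-hand side. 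Both proofs are short; yours is perhaps more self-contained in that it reuses only \eqref{eqa}, \eqref{aneq0} and the elementary inequality $a'(s)>-2\mu$ already present in the paper.
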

Note that if $R_G=+\infty$, we have $\int_0^{R_G} G(s) \mathrm{d}s =+\infty$.
\begin{proof}
We suppose by contradiction that there exists $\mu>-\mu_0$ such that $R(\mu)=R_G$. Therefore, in this case by Theorem \ref{thcaractRwrtmu}, and by definition of $x_0$ in Proposition \ref{premdist}, we have that $x_0<0$, $Q$ is decreasing on $(x_0,0)$ and $Q(x_0)=R_G$. Let $x \in (x_0,0)$, we have by change of variable:
\begin{equation}\label{z2}
\int_0^x e^{2 \mu y }\left(G(Q(y))-Q(y)\right)\mathrm{d}y=\int_1^{Q(x)} e^{2 \mu Q^{-1}(s) }\frac{\left(G(s)-s\right)}{Q^{\prime}(Q^{-1}(s))}\mathrm{d}s.
\end{equation}
Moreover, \eqref{Qprimeequ} implies that:
\begin{equation}\label{z1}
Q^{\prime}(Q^{-1}(s))\geq Q^{\prime}(0)e^{-2\mu Q^{-1}(s)}.
\end{equation}
By introducing \eqref{z1} into \eqref{z2} we obtain:
\begin{equation}\label{z3}
\int_0^x e^{2 \mu y }\left(G(Q(y))-Q(y)\right)\mathrm{d}y\leq\int_1^{Q(x)} e^{4 \mu Q^{-1}(s) }\frac{\left(G(s)-s\right)}{Q^{\prime}(0)}\mathrm{d}s.
\end{equation}
We now suppose that $\mu\geq 0$. Using $\eqref{Qprimeequ}$, $\eqref{z3}$ and the fact that $Q^{-1}(s)\geq 0$ we obtain:
\begin{equation}
Q^{\prime}(x)\geq\left(Q^{\prime}(0)-\frac{2\beta}{Q^{\prime}(0)}\int_1^{Q(x)} \left(G(s)-s\right)\mathrm{d}s \right)e^{-2 \mu x}.
\end{equation}
We then have by comparison theorem:
\begin{equation}
\lim_{x \rightarrow x_0} Q^{\prime}(x)=+\infty,
\end{equation}
which is in contradiction with the assumption that $Q$ is decreasing on $(x_0,0)$. By Proposition \ref{premdist} we can conclude that there is $x_0<0$ such that $Q^{\prime}(x_0)=0$, $Q(x_0)<R_G$ and $Q$ decreasing on $(x_0,0)$, which implies that $R(\mu)=Q(x_0)$. We have supposed that $\mu \geq 0$ but since $R(\mu)$ is an non-decreasing function this result also holds for $\mu \in (-\mu_0,0]$.
\end{proof}
By gathering Proposition \ref{propimpli1} and \eqref{eqimpliintGfinimplimucfini}, we obtain Theorem \ref{threachornot}. We finish this section by giving an exhaustive description of $(Q(x_0),Q^{\prime}(x_0))$.
\begin{proposition}\label{Propimpl2}
Let $\mu \in (-\mu_0,+\infty)$.
\begin{enumerate}
\item If $\mu<\mu_c$ then $Q^{\prime}(x_0)=0$ and $Q(x_0)<R_G$;
\item if $\mu = \mu_c$ then $Q^{\prime}(x_0)=0$ and $Q(x_0)=R_G$;
\item if $\mu > \mu_c$ for all $x \geq x_0$, $Q^{\prime}(x_0)<0$ and $Q(x_0)=R_G$. 
\end{enumerate}
Furthermore, $\mu \mapsto R(\mu)$ is continuous on $\mathbb{R}$.
\end{proposition}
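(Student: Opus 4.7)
The plan is to handle the three cases in order, deducing continuity of $R$ along the way. Case~1 is immediate: for $\mu<\mu_c$, the monotonicity of $R$ (Proposition~\ref{nocut}) together with the definition of $\mu_c$ gives $R(\mu)<R_G$, so Theorem~\ref{thcaractRwrtmu} yields $Q(x_0,\mu)=R(\mu)<R_G$, and the dichotomy of Proposition~\ref{premdist} forces $Q'(x_0,\mu)=0$. If $\mu_c=+\infty$, Cases~2 and~3 are vacuous and continuity of $R$ on $\mathbb{R}$ follows from Lemma~\ref{continuityR}, so we may assume $\mu_c<\infty$; in particular $R_G<\infty$ and $\int_0^{R_G}G(s)\,ds<\infty$ by Theorem~\ref{threachornot}.

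Next I would establish $R(\mu_c)=R_G$. By the infimum definition and monotonicity, $R\equiv R_G$ on $(\mu_c,+\infty)$; if $R(\mu_c)<R_G$, Lemma~\ref{continuityR} would apply at $\mu_c$ and give $R(\mu)\to R(\mu_c)<R_G$ as $\mu\downarrow \mu_c$, a contradiction. This identifies $Q(x_0(\mu_c),\mu_c)=R_G$ and also gives right-continuity of $R$ at $\mu_c$. For Case~3, fix $\mu>\mu_c$ and apply Proposition~\ref{nocut} to $\mu_c<\mu$: the function $\phi(s):=a(s,\mu_c)-a(s,\mu)$ is strictly positive on $(q,R_G)$ (so $\phi(1)>0$) and strictly increasing on $(1,R_G)$, hence $\phi(s)\geq \phi(1)>0$ on $[1,R_G)$. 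If one had $\lim_{s\uparrow R_G}a(s,\mu)=0$, then $\lim_{s\uparrow R_G}a(s,\mu_c)\geq \phi(1)>0$, contradicting $a(\cdot,\mu_c)\leq 0$ from Proposition~\ref{propqprimeinfstrict0}. Therefore $Q'(x_0(\mu),\mu)\neq 0$, and the sign is negative since $Q$ is decreasing on $J(\mu)$.

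Case~2 and left-continuity of $R$ at $\mu_c$ are treated simultaneously via the energy identity
\[
a^2(s,\mu)=a^2(1,\mu)-4\mu\int_1^s a(u,\mu)\,du-4\beta\int_1^s (G(u)-u)\,du
\]
obtained by integrating \eqref{eqa}. Take $\mu_n\uparrow\mu_c$; by Case~1, $a(R(\mu_n)^-,\mu_n)=0$. Set $L:=\lim_n R(\mu_n)\leq R_G$. The inequality $a(u,\mu_n)>a(u,\mu_c)$ on $(q,R(\mu_n))$ from Proposition~\ref{nocut} gives $|a(u,\mu_n)|\leq |a(u,\mu_c)|$, which is integrable on $(1,R_G)$. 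Combined with the pointwise convergence $a(u,\mu_n)\to a(u,\mu_c)$ (Proposition~\ref{lemcontinuQprilme0} and continuity of ODE solutions in $\mu$) and the integrability of $G(u)-u$ on $(1,R_G)$, dominated convergence allows one to pass to the limit in the energy identity at $s=R(\mu_n)$, yielding $a^2(L,\mu_c)=0$. Since $a(\cdot,\mu_c)<0$ on $(q,R_G)$, this forces $L=R_G$, thereby establishing both left-continuity of $R$ at $\mu_c$ and $Q'(x_0(\mu_c),\mu_c)=0$. Continuity of $R$ on $\mathbb{R}$ then follows by combining Lemma~\ref{continuityR} on $(-\infty,\mu_c)$, the left-continuity at $\mu_c$, and the constancy of $R$ on $[\mu_c,+\infty)$.

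The main obstacle is the simultaneous proof of Case~2 and left-continuity: controlling $a(s,\mu_n)$ uniformly up to the boundary $s=R(\mu_n)$ as $\mu_n\uparrow\mu_c$, since the ODE can become singular at $s=R_G$ (particularly when $G(R_G)=+\infty$). The key ingredients are the integrability $\int_0^{R_G}G(s)\,ds<\infty$, which makes the right-hand side of the energy identity extend continuously to $s=R_G$, and the monotonicity bound on $|a|$ from Proposition~\ref{nocut}, which supplies the dominating function needed to pass to the limit.
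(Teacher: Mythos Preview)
Your proof is correct. Cases~1 and~3, as well as the argument that $R(\mu_c)=R_G$, match the paper's treatment essentially verbatim (the paper phrases Case~3 as ``uniqueness of the $\mu$ with both $Q'(x_0)=0$ and $Q(x_0)=R_G$,'' which is the same use of the monotonicity of $\phi$ from Proposition~\ref{nocut}).

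For Case~2 together with left-continuity at $\mu_c$, you take a genuinely different route. The paper (Lemma~\ref{lemma2annexe}) argues in two separate steps: first it proves continuity of $R$ at $\mu_c$ by a phase-portrait argument in the spirit of Lemma~\ref{continuityR}, building a tubular domain $D$ around the trajectory $X(\cdot,\mu_c)$ on $[x_1,0]$ for $x_1$ slightly above $x_0(\mu_c)$, so as to avoid the boundary $Q=R_G$ where $\Gamma$ may fail to be Lipschitz; flow continuity then gives $R(\mu)\geq R_G-2\epsilon$ for $\mu$ near $\mu_c$. Second, it proves $Q'(x_0,\mu_c)=0$ by contradiction: assuming $Q'(x_0,\mu_c)<0$, the energy estimate bounds $|a(s,\mu)-a(s_1,\mu)|$ uniformly for $\mu$ close to $\mu_c$, forcing $|a(\cdot,\mu)|$ to stay bounded away from $0$ up to $R_G$ and hence $R(\mu)=R_G$ even for $\mu<\mu_c$, a contradiction. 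You instead pass directly to the limit in the integrated energy identity along $\mu_n\uparrow\mu_c$ via dominated convergence, obtaining $a^2(L,\mu_c)=0$ and hence both $L=R_G$ and $Q'(x_0,\mu_c)=0$ in one stroke. Your approach is more concise and yields the two conclusions simultaneously; the paper's approach stays within the ODE-flow framework already set up for Lemma~\ref{continuityR} and does not require checking pointwise convergence of $a(\cdot,\mu_n)$ or integrability of the dominating function. Both approaches ultimately rest on the same two ingredients you identify: integrability of $G$ up to $R_G$ and the ordering $|a(\cdot,\mu_n)|\leq |a(\cdot,\mu_c)|$ from Proposition~\ref{nocut}.
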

Note that if $R_G=1$ we are always in the third case of this proposition, and if $\int_0^{R_G} G(x) \mathrm{d}x =+\infty$ we are always in the first case. The continuity of $R$ have already been seen on $R^{-1}[1,R_G)$ and the first point is already known. We can reformulate what it remains to prove with the following lemma.
\begin{lemma} \label{lemma2annexe}
If $R_G>1$ and $\int_0^{R_G} G(x) \mathrm{d}x <+\infty$, $\mu_c$ is the unique $\mu$ such that $Q^{\prime}(x_0(\mu),\mu)=0$, $Q(x_0(\mu),\mu)=R_G$. Moreover, $R$ is continuous at $\mu_c$.
\end{lemma}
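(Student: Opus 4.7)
The plan is to prove Lemma \ref{lemma2annexe} in three connected steps, combining the integrated form of the ODE \eqref{eqa}, the strict ordering of trajectories from Proposition \ref{nocut}, and the continuity of the flow in $\mu$ from Proposition \ref{lemcontinuQprilme0} and Lemma \ref{continuityR}.

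First, I would establish $R(\mu_c) = R_G$. Proposition \ref{propratteint} gives $\mu_c < \infty$, and the non-decrease of $R$ (Proposition \ref{nocut}) combined with the definition of $\mu_c$ as an infimum yields $R(\mu) = R_G$ for every $\mu > \mu_c$. If instead $R(\mu_c) < R_G$ held, then $\mu_c$ would lie in $R^{-1}([1, R_G))$, so Lemma \ref{continuityR} would force $R$ to be continuous at $\mu_c$, contradicting the jump to $R_G$ just to the right. Theorem \ref{thcaractRwrtmu} then immediately gives $Q(x_0(\mu_c), \mu_c) = R_G$, settling half of the characterization of $\mu_c$.

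The heart of the argument is the identity $Q'(x_0(\mu_c), \mu_c) = 0$. Integrating \eqref{eqa} from $1$ to $s$ and using $a(1, \mu) = Q'(0, \mu)$ gives
\begin{equation}
a^2(s, \mu) = (Q'(0, \mu))^2 - 4\mu \int_1^s a(u, \mu)\, du - 4\beta \int_1^s (G(u) - u)\, du, \tag{$\star$}
\end{equation}
valid on $(q, R(\mu))$. For $\mu < \mu_c$ we have $R(\mu) < R_G$ and, by Proposition \ref{premdist} combined with Theorem \ref{thcaractRwrtmu}, $a(R(\mu), \mu) = 0$, so $(\star)$ at $s = R(\mu)$ is a balance equation. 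I would first prove the left-continuity $R(\mu) \to R_G$ as $\mu \uparrow \mu_c$: the monotone limit $L := \lim_{\mu \uparrow \mu_c} R(\mu)$ satisfies $L > 1$ (since $R > 1$ on $(-\mu_0, \infty)$ by Proposition \ref{propqprimeinfstrict0}); if $L < R_G$, then passing to the limit in the balance equation via Proposition \ref{lemcontinuQprilme0} and dominated convergence gives $(Q'(0, \mu_c))^2 = 4\mu_c \int_1^L a(u, \mu_c)\, du + 4\beta \int_1^L (G - u)\, du$, and feeding this back into $(\star)$ at $\mu = \mu_c$, $s = L$ produces $a^2(L, \mu_c) = 0$, contradicting $a(\cdot, \mu_c) < 0$ on $(q, R_G)$ from \eqref{aneq0}. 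Hence $L = R_G$, and letting $s \to R_G^-$ in $(\star)$ for $\mu = \mu_c$ and comparing with the same limit balance gives $\lim_{s \to R_G^-} a^2(s, \mu_c) = 0$, i.e., $Q'(x_0(\mu_c), \mu_c) = 0$.

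For uniqueness, any $\mu < \mu_c$ has $Q(x_0(\mu), \mu) = R(\mu) < R_G$, so the condition $Q(x_0) = R_G$ fails; for any $\mu > \mu_c$, Proposition \ref{nocut} shows that $\phi(s) := a(s, \mu_c) - a(s, \mu)$ is strictly increasing on $(1, R_G)$ with $\phi(1) > 0$, hence $\lim_{s \to R_G^-} \phi(s) \geq \phi(1) > 0$ and $a(R_G^-, \mu) < a(R_G^-, \mu_c) = 0$, i.e., $Q'(x_0(\mu), \mu) < 0$. Continuity of $R$ at $\mu_c$ then follows from the left-continuity just established and the trivial right-continuity ($R \equiv R_G$ on $[\mu_c, \infty)$). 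The main obstacle I foresee is the dominated-convergence step in the previous paragraph: it requires a bound on $|a(u, \mu)|$ uniform in $\mu$ over a left neighborhood of $\mu_c$ and in $u$ over the expanding interval $[1, R(\mu))$. I expect to obtain this by combining the fact that $x_0(\mu)$ stays bounded there (which should follow from the style of estimate used in Proposition \ref{premdist}, applied uniformly in $\mu$) with continuous dependence of solutions of the KPP ODE on the parameter, keeping the phase-portrait curve in a fixed compact set where $|Q'|$ is controlled.
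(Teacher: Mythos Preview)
Your route is correct and genuinely different from the paper's. Both share Step~1 ($R(\mu_c)=R_G$ via Lemma~\ref{continuityR}) and the uniqueness via Proposition~\ref{nocut}, but the order and mechanism for the remaining two claims differ.

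The paper first proves continuity of $R$ at $\mu_c$ by a phase-portrait argument in the spirit of Lemma~\ref{continuityR}: one builds a tubular domain $D$ around the trajectory $X(\cdot,\mu_c)$ on $[x_1,0]$ for $x_1$ slightly larger than $x_0(\mu_c)$, keeping $D$ away from the singular line $\{R_G\}\times\mathbb{R}$, and uses continuous dependence of the flow to trap $X(\cdot,\mu)$ in $D$ for $\mu$ close to $\mu_c$; this already yields $R(\mu)\geq R_G-2\epsilon$. Then $Q'(x_0(\mu_c),\mu_c)=0$ is obtained by contradiction: assuming $Q'(x_0(\mu_c),\mu_c)<0$, one integrates \eqref{eqa} from $s_1=Q(x_1,\mu_c)$ to $s$ and shows $|a(s,\mu)-a(s_1,\mu)|$ stays small uniformly in $\mu$ near $\mu_c$, so $|a(\cdot,\mu)|$ is bounded below and cannot vanish, forcing $R(\mu)=R_G$ for every such $\mu$ --- impossible for $\mu<\mu_c$. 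By contrast, you pass to the limit in the balance identity $(\star)$ directly, obtaining left-continuity of $R$ and $Q'(x_0(\mu_c),\mu_c)=0$ in one stroke, with continuity of $R$ as a corollary. Your argument is cleaner once the convergence is justified; the paper's stability estimate trades that for a local bound that sidesteps any global integrability issue.

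Your acknowledged gap is easier to close than you suggest. From \eqref{eqa} one has $a'(s,\mu)=-2\mu-2\beta(G(s)-s)/a(s,\mu)\geq -2\mu$ for $s\in(1,R(\mu))$, since the second term is positive there; hence $|a(s,\mu)|\leq |Q'(0,\mu)|+2\mu^{+}(R_G-1)$, which is essentially the inequality behind \eqref{ineqanear1}. This bounds $|a(\cdot,\mu)|$ uniformly for $\mu$ in any compact set, so in particular $|a(\cdot,\mu_c)|$ is bounded (hence integrable) on $[1,R_G)$. Combined with the ordering $|a(u,\mu)|<|a(u,\mu_c)|$ for $\mu<\mu_c$ from Proposition~\ref{nocut}, and pointwise convergence $a(u,\mu)\to a(u,\mu_c)$ (continuous dependence of solutions of \eqref{eqa} on the parameter $\mu$ and on the initial value $a(1,\mu)=Q'(0,\mu)$, the latter continuous by Proposition~\ref{lemcontinuQprilme0}), dominated convergence goes through directly; no detour through boundedness of $x_0(\mu)$ is needed. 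One small correction: $R(\mu)>1$ for $\mu>-\mu_0$ follows from $x_0(\mu)<0$ together with the strict decrease of $Q$ on $J$, not from Proposition~\ref{propqprimeinfstrict0} as cited.
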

As for Lemma \ref{continuityR}, we give the proof of this lemma in Annexes.
\section{Case $R(\mu)<R_G$}
We chose to dedicate a section to this case because in this situation we can give an exact equivalent to $q_n(x)$ when $n$ tends to $+\infty$ by using complex analytical methods. The general idea to use complex analysis and more specifically the singularity analysis in this context is due to Maillard. Since the behaviour of $f_x$ near its singularities is different from that of $F_x$ in $\cite{maillard2013number}$, we nevertheless need to do some adjustments. We start with some notations and known results. The next lemma is Lemma 6.1 of \cite{maillard2013number}.
\begin{lemma}
\label{lem_delta}
The span of $Z_x$ and $\delta$ (the span of $G$) are equal.
\end{lemma}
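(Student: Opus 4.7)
The plan is to establish the two divisibility relations $\delta \mid \delta'$ and $\delta' \mid \delta$, where $\delta'$ denotes the span of $Z_x$ (defined on the event $\{\zeta_x < \infty\}$, consistent with the asymptotic $q_{\delta i + 1}(x)$ appearing in Theorem \ref{asymptotic}, i.e.\ as the greatest positive integer such that the support of $Z_x - 1$ on that event lies in $\delta' \mathbb{Z}$).

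For $\delta \mid \delta'$, I would use a simple particle-count identity. Since this section assumes $R_G > 1$, we have $m < \infty$, and on $\{\zeta_x < \infty\}$ only finitely many branching events occur. Write $B$ for that number and $L_1,\dots,L_B$ for the successive offspring counts. Each particle has exactly one birth (the initial one, or else as an offspring of some branching) and exactly one death (by branching or at the barrier $-x$); equating total births and total deaths gives
\[
1 + \sum_{i=1}^{B} L_i = B + Z_x, \qquad \text{hence} \qquad Z_x - 1 = \sum_{i=1}^{B}(L_i - 1).
\]
Since each $L_i - 1 \in \delta \mathbb{Z}$ by definition of $\delta$, so is $Z_x - 1$ almost surely on $\{\zeta_x < \infty\}$, yielding $\delta \mid \delta'$.

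For the reverse $\delta' \mid \delta$, I would pick finitely many $\ell_1, \dots, \ell_r$ with $p_{\ell_j} > 0$ and $\gcd_j(\ell_j - 1) = \delta$; such a finite collection exists by definition of the gcd. It then suffices to show $q_{\ell_j}(x) > 0$ for each $j$, since this forces $\delta' \mid (\ell_j - 1)$ for every $j$, whence $\delta' \mid \delta$. When $\ell_j \geq 2$, formula \eqref{dxzxzetax} established in Proposition \ref{firstboundradius} gives $\mathbb{P}(D_x = 1, Z_x = \ell_j, \zeta_x < \infty) > 0$ directly (using $\lambda > 0$). When $\ell_j = 0$ (which forces $\delta = 1$ and thus trivialises the divisibility), a completely analogous single-branching argument yields $q_0(x) > 0$.

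The argument is essentially combinatorial with a small probabilistic input: the only care needed is the finiteness of $B$ on $\{\zeta_x < \infty\}$, which is the main (and minor) technical point, handled by the standing assumption $R_G > 1$ (hence $m < \infty$) of this section. Combining the two divisibilities gives $\delta = \delta'$.
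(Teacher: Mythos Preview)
The paper does not actually give a proof of this lemma; it simply invokes Lemma~6.1 of Maillard~\cite{maillard2013number}. Your argument is correct and self-contained, and is in fact essentially the same as Maillard's: the bookkeeping identity $Z_x-1=\sum_{i=1}^B(L_i-1)$ on $\{\zeta_x<\infty\}$ gives $\delta\mid\delta'$, and the explicit single-branching computation~\eqref{dxzxzetax} yields $q_{\ell_j}(x)>0$ whenever $p_{\ell_j}>0$, forcing $\delta'\mid\delta$. Your observation that Section~4 implicitly has $R_G>1$ (since $R(\mu)<R_G$ would otherwise be impossible), hence $m<\infty$ and $B<\infty$ on extinction, is exactly the small adaptation needed here compared to Maillard's almost-sure-extinction setting.

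One cosmetic remark: in the $\ell_j=0$ case the parenthetical ``trivialises the divisibility'' is misleading, since $\delta=1$ does not by itself force $\delta'=1$; but you then do show $q_0(x)>0$, which gives $\delta'\mid(0-1)=-1$ and hence $\delta'=1$, so the argument is complete.
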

We also gives an adaptation in our context of Lemma 6.2 of \cite{maillard2013number}.  Let $x_0$ be defined as in Proposition \ref{premdist} and $s_0=Q(x_0)=R(\mu)$. For $z\in \mathbb{C}$ and $r>0$, $D(z,r)$ will denote the open disc of center $z$ and radius $r$ and we fix $D=D(0,s_0)$ and $D_{\delta}=D(0,{s_0}^\delta)$. As usual, the frontier of a set $S$ is denoted by $\partial S$.
\begin{lemma}
\label{lem_h}
Fix $x>0$. If $\delta = 1$, then $f_x$ is analytical at every $s\in\partial D\backslash\{s_0\}$. 
If $\delta \geq 2$, then there exists an analytical function on $D_{\delta}$: $h_x$, such that:
\begin{equation}\label{defhx}
f_x(s) = sh_x(s^\delta), \; \forall s\in D.
\end{equation}
Moreover, $h_x$ is analytical at every $s\in\partial D_{\delta} \setminus \{s_0^{\delta}\}$.
\end{lemma}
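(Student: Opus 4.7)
The plan is to split the argument into two parts: producing the factorization $f_x(s) = s\,h_x(s^\delta)$ when $\delta \ge 2$, and establishing the analytic extension of $f_x$ (resp.\ $h_x$) across the boundary of its disc of convergence at every point other than $s_0$ (resp.\ $s_0^\delta$).

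For the factorization when $\delta \ge 2$: the span assumption $\delta \ge 2$ forces $-1 \notin \delta\mathbb{Z}$, hence $p_0 = 0$ and $q = 0$. On the event $\{\zeta_x < \infty\}$ at least one particle must then be absorbed at $-x$, so $Z_x \ge 1$ almost surely on this event. Together with Lemma \ref{lem_delta}, this forces $Z_x \in 1 + \delta\mathbb{N}$, so $q_i(x) = 0$ whenever $i \not\equiv 1 \pmod \delta$. Setting $h_x(u) := \sum_{k \ge 0} q_{1+k\delta}(x)\, u^k$ yields the factorization $f_x(s) = s\,h_x(s^\delta)$ on $D$, and by Cauchy--Hadamard $h_x$ has radius of convergence $s_0^\delta$, so it is holomorphic on $D_\delta$.

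For the boundary analyticity (the core step), I will combine the functional equation \eqref{linkomegQ}, $f_x(s) = Q(Q^{-1}(s) + x)$, with the complex-analytic extension of $Q$ provided by Proposition \ref{extension}. By Proposition \ref{propqprimeinfstrict0}, $Q'$ does not vanish on $J = (x_0,\infty)$, so the inverse function theorem provides a local analytic inverse of $Q$ at each point of $(q, s_0)$; the only place where this fails on $[q, s_0]$ is $s_0$ itself, since $Q'(x_0) = 0$ in the present setting $R(\mu) < R_G$ (by Proposition \ref{premdist}). For any $s_1 \in \partial D \setminus \{s_0\}$, I plan to analytically continue $Q^{-1}$ along a path in $\mathbb{C} \setminus \{s_0\}$ from a base point in $(q, s_0)$ to a complex neighborhood of $s_1$; the composition $s \mapsto Q(Q^{-1}(s) + x)$ then provides the analytic extension of $f_x$ near $s_1$, since the shift by $+x$ keeps $Q^{-1}(s) + x$ inside the domain of analyticity of $Q$. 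For $\delta \ge 2$, the factorization yields the symmetry $f_x(\omega s) = \omega f_x(s)$ for every $\delta$-th root of unity $\omega$, and any $u_1 \in \partial D_\delta \setminus \{s_0^\delta\}$ writes as $s_1^\delta$ with $s_1 \in \partial D$ distinct from $s_0\omega$ for all $\omega$ with $\omega^\delta = 1$; the local biholomorphism $s \mapsto s^\delta$ at $s_1 \ne 0$ then transfers the analyticity of $f_x$ at $s_1$ into analyticity of $h_x$ at $u_1$.

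The main obstacle will be rigorously controlling the complex domain on which $Q^{-1}$ admits its analytic continuation: one must verify that this domain contains a full complex neighborhood of every $s_1 \in \partial D \setminus \{s_0\}$, and that the monodromy around $s_0$ (of square-root type, since $Q'(x_0)=0$ and $Q''(x_0) = -2\beta(G(s_0)-s_0) \ne 0$ by the KPP equation \eqref{KKPx} together with $s_0 > 1$) does not obstruct single-valuedness of the composition $Q(Q^{-1}(\cdot)+x)$ when the path $\gamma$ is confined to the simply-connected region $\overline D \setminus \{s_0\}$. This amounts essentially to a phase-portrait analysis of the ODE \eqref{deff} in the complex domain, in the spirit of Section 12.1 of \cite{ince1927ordinary}.
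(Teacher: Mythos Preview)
Your factorization argument for $\delta \ge 2$ is correct and standard: $\delta \ge 2$ forces $-1 \notin \delta\mathbb Z$, hence $p_0 = 0$, $q = 0$, and $Z_x \ge 1$ on $\{\zeta_x < \infty\}$; combined with Lemma~\ref{lem_delta} this gives $Z_x \in 1 + \delta\mathbb N$ on that event, and the factorization $f_x(s)=s\,h_x(s^\delta)$ with $h_x$ holomorphic on $D_\delta$ follows by Cauchy--Hadamard.

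For the boundary analyticity, however, your route diverges from the paper's and is left genuinely incomplete. The paper does not analytically continue $Q^{-1}$ into the complex plane for this lemma; it simply invokes Maillard's proof of his Lemma~6.2 in \cite{maillard2013number}, which is based on the branching/semigroup identity $f_{x+y}=f_y\circ f_x$, and observes that the only additional ingredient needed in the present setting is the finiteness $f_x(s_0)<\infty$. That finiteness is automatic in Maillard's case (there $s_0=1$ and $F_x(1)=1$), while here it is checked via \eqref{linkomegQ}, which gives $\lim_{s\uparrow s_0}f_x(s)=Q(x_0+x)<\infty$, together with monotone convergence. No complex phase-portrait analysis of \eqref{deff} is involved.

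The gap in your plan is precisely the ``main obstacle'' you name, and you do not resolve it. To continue $Q^{-1}$ along a path from a base point in $(q,s_0)$ to a neighborhood of an arbitrary $s_1\in\partial D\setminus\{s_0\}$ you must guarantee that the lifted path in the complex $x$-plane stays inside the region where $Q$ is holomorphic (that is, where the continuation of $Q$ satisfies $|Q|<R_G$, so that $G\circ Q$ remains analytic) \emph{and} where $Q'\neq 0$. Neither condition follows from the real-variable information in Propositions~\ref{extension} and~\ref{propqprimeinfstrict0}: those control $Q$ and $Q'$ only on the real interval $I$, not the complex zero set of $Q'$ nor the complex level set $|Q|=R_G$. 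Your square-root monodromy remark at $s_0$ is correct but local; it says nothing about possible obstructions to continuation encountered far from $s_0$, e.g.\ on the way to $s_1=-s_0$. Likewise, the assertion that ``the shift by $+x$ keeps $Q^{-1}(s)+x$ inside the domain of analyticity of $Q$'' is unjustified once $Q^{-1}(s)$ is complex. As written, then, the proposal identifies the difficulty but does not overcome it; you would either have to carry out the complex ODE analysis in full, or switch to the semigroup argument the paper relies on, which bypasses these issues entirely once $f_x(s_0)<\infty$ is known.
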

The proof of the previous result can be adapted from \cite{maillard2013number} to our case with one exception. Indeed, we need to have $f_x(s_0)<\infty$, whose analogue is always satisfied in Maillard's case (since in his situation $s_0=1$ and $F_x(1)=1$) but which is not obvious in ours. However,  \eqref{linkomegQ} and the fact that $R(\mu)=Q(x_0)$ yield $\lim_{s\rightarrow s_0} f_x(s)=Q(x_0+x)<\infty$. Moreover, the coefficients of $f_x$ as a power series are non-negative. Therefore, by applying for instance the monotone convergence theorem we see that $f_x(s_0)=Q(x_0+x)<\infty$. \\

Finally, we state a reformulation in our framework of Corollary VI.1 of \cite{flajolet2009analytic}. For $z \in \mathbb{C}$, arg$(z)$ is chosen in $(-\pi,\pi]$. We call  a $\Delta$-domain, as in $\cite{flajolet2009analytic}$ and in $\cite{maillard2013number}$, a set defined for $\varphi \in (0,\pi/2)$, $s>0$ and $r>0$ by:
\begin{equation}\label{defdelta}
\Delta(\varphi,r,s) := \{z\in D(0,s+r)\setminus\{s\}: |\arg(z-s)|>\varphi\}.
\end{equation}
\begin{thmc}
Let $\varphi \in (0,\pi/2)$, $s>0$ and $r>0$. Let $\mathcal{H}(z):=\sum_{n=0}^{+\infty} H_n z^n$ be an analytical function on $\Delta(\varphi,r,s)$. If there exists $\alpha \in \mathbb{R}\setminus \mathbb{Z}_-$ such that:
$$\mathcal{H}(z) \underset{z \rightarrow r}{\sim} \frac{1}{(r-z)^{\alpha}}, \; z \in \Delta(\varphi,r,s)$$
then
$$H_n \underset{n \rightarrow +\infty}{\sim} \frac{n^{\alpha-1}}{r^{n+\alpha} \Gamma(\alpha) }.$$
\end{thmc}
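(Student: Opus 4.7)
The plan is to carry out Flajolet's singularity analysis: extract the asymptotic of $H_n$ from the local behaviour of $\mathcal{H}$ near its singularity $r$ via an adapted contour deformation. The starting point is Cauchy's coefficient formula
\[
H_n = \frac{1}{2\pi i}\oint_{\gamma} \frac{\mathcal{H}(z)}{z^{n+1}}\,\mathrm{d}z,
\]
valid for any positively oriented simple closed contour $\gamma$ encircling $0$ inside $\Delta(\varphi,r,s)$. I would deform $\gamma$ into a Hankel-type contour $\gamma_n$ tailored to the scale $1/n$: a small circular arc of radius $1/n$ centred at $r$, joined by two rectilinear segments (at an angle slightly larger than $\varphi$ with the real axis) to a large circular arc of radius $r + r/n$ centred at $0$, the whole contour lying inside $\Delta(\varphi,r,s)$.

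Next, I would concentrate on the portion of $\gamma_n$ near $r$ and perform the change of variable $z = r(1 - t/n)$. Using the hypothesis $\mathcal{H}(z)\sim(r-z)^{-\alpha}$ and $(1-t/n)^{-(n+1)}\to e^{t}$ uniformly on compact $t$-sets, the integrand transforms into
\[
\frac{\mathcal{H}(z)}{z^{n+1}}\,\mathrm{d}z \;\sim\; -\frac{n^{\alpha-1}}{r^{n+\alpha}}\, e^{t}\,(-t)^{-\alpha}\,\mathrm{d}t.
\]
As $n\to\infty$, the scaled inner contour tends to the classical Hankel contour $\mathcal{H}_0$ around $0$, and the Hankel integral representation
\[
\frac{1}{\Gamma(\alpha)} = \frac{1}{2\pi i}\int_{\mathcal{H}_0} e^{t}(-t)^{-\alpha}\,\mathrm{d}t,
\]
valid precisely because $\alpha \notin \mathbb{Z}_-$, produces exactly the announced prefactor $n^{\alpha-1}/\bigl(r^{n+\alpha}\Gamma(\alpha)\bigr)$.

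The main obstacle is the rigorous control of the errors, which splits into two parts. First, on the outer portion of $\gamma_n$ (the large circular arc and the pieces of the rectilinear segments away from $r$), one has $|z|\geq r+\delta$ for some fixed $\delta>0$, while $\mathcal{H}$ is bounded on compact subsets of $\Delta(\varphi,r,s)$; the factor $|z|^{-(n+1)}$ then contributes $O\bigl((r+\delta)^{-n}\bigr)$, which is exponentially smaller than the main term. Second, on the inner portion one must replace $\mathcal{H}(z)$ by its equivalent $(r-z)^{-\alpha}$ and control this substitution uniformly along the scaled contour; here the $\Delta$-domain geometry is essential, since it guarantees that $\mathcal{H}(z)\sim(r-z)^{-\alpha}$ holds uniformly in any sector strictly avoiding the tangential direction at $r$, which in turn gives integrable control of the remainder after the scaling $z=r(1-t/n)$. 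Combining these two estimates with the Hankel computation above yields the announced equivalent for $H_n$.
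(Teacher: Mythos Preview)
The paper does not prove this statement: it is quoted as Corollary~VI.1 of Flajolet--Sedgewick \cite{flajolet2009analytic} and used as a black box in the proof of Theorem~\ref{asymptotic}. So there is no ``paper's own proof'' to compare against.

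That said, your sketch is essentially the standard argument from \cite{flajolet2009analytic}: Cauchy's formula, deformation to a Hankel contour at scale $1/n$ around the singularity $r$, the rescaling $z=r(1-t/n)$ to recover the Hankel integral for $1/\Gamma(\alpha)$, and the two-part error control (exponential decay on the outer arc, uniform replacement of $\mathcal{H}$ by $(r-z)^{-\alpha}$ on the inner part thanks to the $\Delta$-domain geometry). One minor remark: the Hankel representation $\frac{1}{\Gamma(\alpha)}=\frac{1}{2\pi i}\int_{\mathcal{H}_0}e^t(-t)^{-\alpha}\,\mathrm{d}t$ actually holds for all $\alpha\in\mathbb{C}$; the restriction $\alpha\notin\mathbb{Z}_-$ is there so that $1/\Gamma(\alpha)\neq 0$, i.e.\ so that the stated asymptotic is a genuine equivalent rather than the degenerate statement $H_n\sim 0$.
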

To apply this theorem, we need the behaviour of $f_x$ when $\delta=1$ (resp. $h_x$, when $\delta\geq 2$) near its singularity $s_0$ (resp. $s_0^{\delta}$). 

Let us introduce the complex logarithm defined for $z \in \mathbb{C} \setminus \mathbb{R}^-$ by 
$$\log(z)=|z|+\mathbf{i}\arg(z)$$
and the complex square root defined on the same set by $$\sqrt{z}=e^{\frac{\log(z)}{2}}.$$ 
\begin{lemma}\label{asmpomermuinfrg}
For each $x>0$, there exists $r_{1,x}>0$ such that $s \mapsto f_x(s)$ is analytical on $D(s_0,r_{1,x}) \setminus (s_0,+\infty)$, and for $s$ in this set we have:
\begin{equation}
f^{\prime}_x(s) \underset{s\rightarrow s_0}{\sim} \frac{-Q^{\prime}\left(x_0+x\right)}{2\sqrt{\beta(s_0-s)(G(s_0)-s_0)}}.\label{asympfprimlemma43}
\end{equation}
Similarly, when $\delta\geq 2$, for each $x>0$, there exists $r_{\delta,x}>0$ such that $s \mapsto h_x(s)$ is analytical on $D(s^{\delta}_0,r_{\delta,x}) \setminus (s^{\delta}_0,+\infty)$, and for $s$ in this set we have:
\begin{equation}
h^{\prime}_x(s)\underset{s\rightarrow {s_0}^{\delta}}{\sim}-\frac{Q^{\prime}(x_0+x)}{2\sqrt{\beta \delta s_0^{\delta+1}(s^\delta_0-s)(G(s_0)-s_0)}}.\label{asymphprimlemma43}
\end{equation}
\end{lemma}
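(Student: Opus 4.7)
The plan is to parametrise $Q$ analytically through its turning point $x_0$ by a local square-root substitution, thereby exposing $\sqrt{s_0-s}$ as the sole singular factor of $f_x$ at $s_0$.

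Concretely, since $\mu<\mu_c$, Proposition \ref{Propimpl2} gives $Q'(x_0)=0$ and $s_0:=Q(x_0)\in(1,R_G)$; the KPP equation \eqref{KKPx} then yields $Q''(x_0)=-2\beta(G(s_0)-s_0)<0$. Because $s_0<R_G$ we have $x_0>x_l$, so Proposition \ref{extension} lets me regard $Q$ as analytic in a complex neighbourhood of $x_0$. I can therefore write $s_0-Q(x)=(x-x_0)^2\,R(x)$ with $R$ analytic near $x_0$ and $R(x_0)=\beta(G(s_0)-s_0)>0$, and set
\begin{equation*}
\phi(x):=(x-x_0)\sqrt{R(x)},
\end{equation*}
which is analytic near $x_0$, vanishes there to first order with $\phi'(x_0)=\sqrt{\beta(G(s_0)-s_0)}\neq 0$, and satisfies $\phi(x)^2=s_0-Q(x)$.

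The holomorphic inverse function theorem then furnishes an analytic local inverse $\psi$ of $\phi$ with $\psi(0)=x_0$. A sign check on the real axis shows that $\psi(\sqrt{s_0-s})=Q^{-1}(s)$ for $s$ real and slightly below $s_0$, taking the principal branch of the square root. Since that principal branch is analytic on $\mathbb{C}\setminus[s_0,+\infty)$ and $Q$ is analytic in a complex neighbourhood of $x_0+x$ (Proposition \ref{extension} again), for $r_{1,x}$ small enough the formula
\begin{equation*}
\tilde f_x(s):=Q(\psi(\sqrt{s_0-s})+x)
\end{equation*}
defines an analytic extension of $f_x$ on $D(s_0,r_{1,x})\setminus(s_0,+\infty)$. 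Differentiating and using $\psi'(0)=1/\phi'(x_0)=1/\sqrt{\beta(G(s_0)-s_0)}$ together with $\frac{d}{ds}\sqrt{s_0-s}=-1/(2\sqrt{s_0-s})$ produces \eqref{asympfprimlemma43} by direct substitution.

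For the $\delta\geq 2$ statement, the identity $f_x(s)=s\,h_x(s^\delta)$ combined with the principal branch $s=t^{1/\delta}$ extends $h_x$ analytically to $D(s_0^\delta,r_{\delta,x})\setminus(s_0^\delta,+\infty)$ for $r_{\delta,x}$ small enough. Differentiating $f_x(s)=sh_x(s^\delta)$ gives $\delta s^\delta h_x'(s^\delta)=f_x'(s)-h_x(s^\delta)$, where the second summand stays bounded as $s\to s_0$; substituting the expansion $s_0-t^{1/\delta}\sim (s_0^\delta-t)/(\delta s_0^{\delta-1})$ as $t\to s_0^\delta$ into \eqref{asympfprimlemma43} yields \eqref{asymphprimlemma43}. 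The one point requiring care throughout is the branch-cut bookkeeping: the ray $(s_0,+\infty)$ really is the cut, because $\sqrt{s_0-s}$ changes sheet across it and $\psi'(0)\neq 0$ ensures that discontinuity survives the composition, whereas no spurious singularity is introduced elsewhere in the disc since all ingredients are single-valued and analytic off the cut.
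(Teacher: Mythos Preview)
Your proof is correct and follows essentially the same route as the paper's: both arguments exploit that $Q(z)-s_0$ has a double zero at $x_0$ to factor out an analytic square root (your $\phi$ is, up to the constant factor $\sqrt{\beta(G(s_0)-s_0)}$, exactly the paper's local chart $\psi$ obtained via Rudin's Theorem~10.32), invert it, and substitute the principal branch of $\sqrt{s_0-s}$ to produce the analytic continuation $Q(\psi(\sqrt{s_0-s})+x)$ on a slit disc; the passage to $h_x$ via $f_x(s)=s\,h_x(s^\delta)$ and the $\delta$th-root substitution is likewise identical.
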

\begin{proof}
To prove the analyticity of $f_x$, we will use and extend in a complex sense Equation \eqref{linkomegQ}. In this equation, the inverse function $Q^{-1}$ is only defined on $J$ (defined in Proposition \ref{premdist}), that is why we will find an analytical function defined near (in a sense we will precise below) $s_0$ which coincides with $Q^{-1}$ on $J$.  \\

By Proposition \ref{Propimpl2}, when $\mu<\mu_c$, $Q^{\prime}(x_0)=0$. Moreover, Equation \eqref{KKPx} implies that $Q^{\prime \prime}(x_0)<0$. Since $Q(x_0)<R_G$, $Q$ admits an analytical extension near $x_0$ by Proposition \ref{extension}. Thus in the complex plane near $x_0$ we have:
\begin{equation}
Q(z)=Q(x_0)+(z-x_0)^2\frac{Q^{\prime \prime}(x_0)}{2}+\underset{z\rightarrow x_0}{o}\left(\left(z-x_0\right)^2\right).
\end{equation}
The function $Q$ is analytical on an neighbourhood of $x_0$, which is a zero of order $2$ of $Q(z)-Q(x_0)$. Theorem 10.32 of \cite{rudin1987real} thus ensures that there exists $r_1>0$ such that on $D(x_0,r_1)$, there exists an analytical invertible function $\psi : D(x_0,r_1) \rightarrow \mathbb{C}$ such that:
\begin{equation}\label{dlx0}
Q(z)=Q(x_0)+\frac{Q^{\prime \prime}(x_0)}{2}\psi(z)^2.
\end{equation}
Note that Equation \eqref{dlx0} implies that for $z\in J\cap D(x_0,r_1)$ we have $\psi\left(z\right)\in \mathbb{R}\cup \mathbf{i}\mathbb{R}$. More precisely, $\psi\left(z\right)\in \mathbb{R}$, because if $\psi(z)\in \mathbf{i}\mathbb{R}$, we would have $Q(z)>Q(x_0)$, which is in contradiction with the definition of $J$. Furthermore, the Intermediate Value Theorem and the fact that $Q(z)<Q(x_0), \forall z \in J$ implies that if there exists $z_0\in J\cap D(x_0,r_1)$ such that $\psi\left(z_0\right)>0$ then for all $z\in J\cap D(x_0,r_1)$, $\psi\left(z\right)>0$ . Finally, since we can substitute $-\psi$ for $\psi$ in \eqref{dlx0}, we can chose $\psi$ such that $\psi>0$ on $J\cap D(x_0,r_1)$. \\

We now fix $s\in[0,s_0) \cap Q\left(D(x_0,r_1)\right)$ (note that $Q$ is analytic, and thus it is an open application, which implies that this intersection is not empty). By choosing $z=Q^{-1}(s)$ in \eqref{dlx0} and using \eqref{KKPx}, we get:
\begin{equation}\label{Qinvparrappsi}
\psi^{-1}\left(\sqrt{\frac{s_0-s}{\beta(G(s_0)-s_0)}}\right)=Q^{-1}(s).
\end{equation}
By considering the complex square root, we can define
\begin{equation}\label{defeta}
\eta(s):=\psi^{-1}\left(\sqrt{\frac{s_0-s}{\beta(G(s_0)-s_0)}}\right), \; \forall s \in Q\left(D\left(x_0,r_1\right)\right)\setminus (s_0,+\infty).
\end{equation} 
We now  recall that for $s \in [0,s_0)$ and $x>0$:
\begin{equation}\tag{\ref{linkomegQ}}
f_x(s)=Q(Q^{-1}(s)+x).
\end{equation}
By Proposition \ref{extension}, there exists $r_2>0$, such that $Q$ is analytical on $D(x_0+x,r_2)$. Furthermore, Equations \eqref{linkomegQ}, \eqref{Qinvparrappsi} and \eqref{defeta} yield $r_3>0$ small enough such that $f_x(s)$ and $Q(\eta(s)+x)$ exist for $s \in [0,s_0) \cap D(s_0,r_3)$ and coincide. Formally, we choose $0<r_3\leq r_1$ such that: 

\begin{equation}\label{defr3}
  \left\{
      \begin{aligned}
       D(s_0,r_3) \subset Q\left(D(x_0,r_1)\right)\\
        Q^{-1}\left(D(s_0,r_3)\cap [0,s_0)\right)+x \subset D(x_0+x,r_2)\\
        \eta\left(D(s_0,r_3)\setminus (s_0,+\infty)\right)+x \subset D(x_0+x,r_2).\\
        \end{aligned}
    \right.
\end{equation}
Moreover, $D(s_0,r_3)  \setminus (s_0,+\infty)$ is an open connected set and $[0,s_0) \cap D(s_0,r_3)$ is a subset of it with an accumulation point. Therefore, $f_x(s)$ admits an unique analytical extension on $D(s_0,r_3)\setminus (s_0,+\infty)$ which is $Q\left(\eta(s)+x\right)$. Thus, we have that, for $s \in D(s_0,r_3)\setminus (s_0,+\infty)$:
\begin{equation}
f^{\prime}_x(s)=-\frac{{(\psi^{-1})}^{\prime}\left(\sqrt{\frac{s_0-s}{\beta(G(s_0)-s_0)}}\right)}{2\sqrt{\beta(s_0-s)(G(s_0)-s_0)}}Q^{\prime}\left(\psi^{-1}\left(\sqrt{\frac{s_0-s}{\beta(G(s_0)-s_0)}}\right)+x\right),
\end{equation}
which implies that:
\begin{equation}\label{finproofequivalfx}
f^{\prime}_x(s) \underset{s\rightarrow s_0}{\sim} -\frac{{(\psi^{-1})}^{\prime}\left(0\right)}{2\sqrt{\beta(s_0-s)(G(s_0)-s_0)}}Q^{\prime}\left(x_0+x\right).
\end{equation}
To get the value of ${(\psi^{-1})}^{\prime}\left(0\right)$ we first differentiate $\eqref{dlx0}$ with respect to $z$:
\begin{equation}\label{dveuler0}
Q^{\prime}(z)=Q^{\prime \prime}(x_0)\psi^{\prime}(z)\psi(z).
\end{equation}
Furthermore, Taylor's formula yields:
\begin{equation}\label{dveuler1}
  \left\{
      \begin{aligned}
       Q^{\prime}(z)=Q^{\prime \prime}(x_0)(z-x_0)+o(z-x_0)\\
       \psi(z)=\psi^{\prime}(x_0)(z-x_0)+o(z-x_0).\\
        \end{aligned}
    \right.
\end{equation}
Equations \eqref{dveuler0} and \eqref{dveuler1} provide $(\psi^{\prime})^2(x_0)=1$. We have chosen $\psi$ such that $\psi>0$ on $(x_0,x_0+r_1)$ and thus $\psi^{\prime}(x_0)=1$. As a consequence, ${(\psi^{-1})}^{\prime}\left(0\right)=1$, which yields \eqref{asympfprimlemma43}. \\

We can derive from the results on $f_x$ the analogous results on $h_x$. Let us define on $\mathbb{C}\setminus \mathbb{R}^-$ the function $z \mapsto \sqrt[\delta]{z}:=e^{\frac{\log(z)}{\delta}}$. Lemma \ref{lem_h} yields:

\begin{equation}\label{hxenfonctionfx}
h_x(s)=\frac{f_x(\sqrt[\delta]{s})}{\sqrt[\delta]{s}}, \forall s \in D(0,s^{\delta}_0) \setminus \mathbb{R}^-.
\end{equation}
Besides, since there exists $0<r_{1,x}<s_0$ such that $f_x$ is analytic on $D(s_0,r_{1,x})\setminus (s_0,+\infty)$, we can show after some change of variable that there exists $0<r_{\delta,x}<s^{\delta}_0$ such that the right term of $\eqref{hxenfonctionfx}$ is analytic on $D(s^{\delta}_0,r_{\delta,x})\setminus(s^{\delta}_0,+\infty)$. The function $h_x$ and the right term of $\eqref{hxenfonctionfx}$ coincide on a open subset of the connected set $D(s^{\delta}_0,r_{\delta,x})\setminus(s^{\delta}_0,+\infty)$ and thus $h_x$ has an analytic extension on $D(s^{\delta}_0,r_{\delta,x})\setminus(s^{\delta}_0,+\infty)$. 

Let us turn to the behaviour of $h_x$ near $s_0^{\delta}$. By differentiating $\eqref{hxenfonctionfx}$ with respect to $s$, we get:
\begin{equation}\label{hxprimfxpri}
h^{\prime}_x(s)=\frac{f^{\prime}_x(\sqrt[\delta]{s})\sqrt[\delta]{s}-f_x(\sqrt[\delta]{s})}{\delta s\sqrt[\delta]{s}}.
\end{equation}
Equation \eqref{finproofequivalfx} yields: 
\begin{align}
f^{\prime}_x(\sqrt[\delta]{s}) &\underset{s\rightarrow s^{\delta}_0}{\sim} -\frac{Q^{\prime}\left(x_0+x\right)}{2\sqrt{\beta\left(\sqrt[\delta]{s^{\delta}_0}-\sqrt[\delta]{s}\right)(G(s_0)-s_0)}} \nonumber \\
&\underset{s\rightarrow s^{\delta}_0}{\sim} -\frac{Q^{\prime}\left(x_0+x\right)\sqrt{\delta}}{2\sqrt{\beta(s^{\delta}_0-s)(s_0^{1-\delta})(G(s_0)-s_0)}}, \label{fprims0delta}
\end{align}
since:
$$
\lim_{s \rightarrow s^{\delta}_0} \frac{\sqrt[\delta]{s^{\delta}_0}-\sqrt[\delta]{s}}{s^{\delta}_0-s}=\frac{1}{\delta}{(s^{\delta}_0)}^{\frac{1}{\delta}-1}.
$$
Moreover, we have seen that $f_x(s_0)<+\infty$. Therefore, by introducing \eqref{fprims0delta} into \eqref{hxprimfxpri}, we get \eqref{asymphprimlemma43}.
\end{proof}
The end of the proof of Theorem $\ref{asymptotic}$ is almost identical to that from Theorem 1.2 in $\cite{maillard2013number}$ up to the fact that the asymptotic is not the same. 
\begin{proof}[Proof of Theorem \ref{asymptotic}]
Let us just give the principal steps when $\delta=1$. Let us take $\varphi_0\in [0,\pi/2)$ small enough such that $s_0e^{\mathbf{i}\varphi_0} \in D(s_0,r_x)$. By Lemma \ref{lem_h}, for each $s\in S:=\lbrace s_0e^{\mathbf{i}\varphi}, \varphi \in [\varphi_0, 2\pi-\varphi_0] \rbrace$, there exists $\mathcal{R}_s>0$ such that $f_x$ admit an analytical extension on $D(s,\mathcal{R}_s)$. Furthermore, $S$ is compact. Hence, there exist $k\in \mathbb{N}$ and $(z_i) \in S^k$ such that:
$$S \subset T:=\underset{i \in \lbrace 1,...k \rbrace}{\cup} D(z_i,\mathcal{R}_{z_i}).$$
Since $f_x$ is analytical on $T\cup D(0,s_0)$ and by Lemma \ref{asmpomermuinfrg}  on $D(s_0,r_x) \setminus (s_0, + \infty)$, we can find $\epsilon>0$ such that $f_x$ is analytical on $\mathcal{D}:=D(0,s_0+\epsilon)\setminus (s_0,+\infty)$.
We now can apply Corollary VI.1 of \cite{flajolet2009analytic}. Indeed, $\mathcal{D}$ contains a $\Delta$-domain which satisfies the assumptions of this Corollary and we precisely know the asymptotic of $f^{\prime}_x$ near $s_0$ by Lemma \ref{asmpomermuinfrg}. Since the $i$th coefficient of $f^{\prime}_x$ as a power series is $(i+1)q_{i+1}(x)$, we get:
$$(i+1)q_{i+1}(x) \underset{i\rightarrow + \infty}{\sim} \frac{-Q^{\prime}(x_0+x)}{2\sqrt{\beta(G(R(\mu))-R(\mu))}}\times \frac{1}{R(\mu)^{i+\frac{1}{2}}\sqrt{i}\Gamma(\frac{1}{2})},$$
which is Theorem \ref{asymptotic} when $\delta=1$.\\

When $\delta \geq 2$, we can also find a good $\Delta$-domain on which $h_x$ is analytical. Furthermore, by definition of $h_x$ \eqref{defhx}, the $i$th coefficient of $h^{\prime}_x$ is $(i+1)q_{\delta(i+1)+1}$. Therefore Corollary VI.1 of \cite{flajolet2009analytic} and Lemma \ref{asmpomermuinfrg} similarly provide \eqref{asymptoticcontenu}.

\end{proof}
\section{Case where $R(\mu)=R_G$}
Let $\mu \in \mathbb{R}$. In the previous section, we handled the case $R(\mu)<R_G$ which includes the case $\mu_c=+\infty$. We now consider the case where $\mu_c<+\infty$ ($\mu_c$ can be equal to $-\infty$) and $\mu \geq \mu_c$. This is equivalent to the case $\int_0^{R_G}G(s)\mathrm{d}s<+\infty$ and $R(\mu)=R_G$. The asymptotic behaviour of $q_k(x)$ when $k$ tends to $+\infty$ was obtained when $R(/mu)<R_G$ by studying $f_x$ near its radius of convergence. Since $R(\mu)=R_G$, it is not possible anymore to extend $f_x$ to a $\Delta$-domain analytically as in the previous section. However, in some case, the behaviour of $G$ as a real function near $R_G$ gives us weaker results. \\

Suppose first that $\mu=\mu_c$.  As a first step, we will give the behaviour of $f_x(s)$, when $s\rightarrow {R_G}^-$, $s\in \mathbb{R}$.
\begin{lemma}\label{lemmafxprim}
If $\mu=\mu_c$, we have:
\begin{equation}\label{asympfxcrit}
f^{\prime}_x(s)\underset{s\rightarrow R_G}{\sim}- \frac{Q^{\prime}(x+x_0)}{2\sqrt{\beta \int_s^{R_G} (G(u)-u) \mathrm{d}u}}, \; s \in (q,R_G).
\end{equation}
\end{lemma}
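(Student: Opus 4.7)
The plan is to exploit the identity $f_x(s) = Q\bigl(Q^{-1}(s)+x\bigr)$ from \eqref{linkomegQ}. Since Proposition \ref{Propimpl2} at $\mu = \mu_c$ gives $R(\mu_c) = Q(x_0(\mu_c)) = R_G$, this identity is valid on the whole interval $(q, R_G)$. Differentiating and using $(Q^{-1})'(s) = 1/a(s)$ with $a$ as in \eqref{defa} gives
\begin{equation*}
f_x'(s) \;=\; \frac{Q'\bigl(Q^{-1}(s)+x\bigr)}{a(s)}.
\end{equation*}
As $s \uparrow R_G$, $Q^{-1}(s)$ decreases to $x_0$, and by continuity of $Q'$ on $(x_0,\infty)$ the numerator tends to $Q'(x_0+x)$, which is finite and nonzero by Proposition \ref{propqprimeinfstrict0}. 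The question therefore reduces to determining the asymptotic of $a(s)$ as $s \uparrow R_G$.

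For this I would integrate the ODE \eqref{eqa} from $s$ to $R_G$. Using $a(R_G) = Q'(x_0) = 0$, which is exactly Proposition \ref{Propimpl2} at $\mu = \mu_c$, yields
\begin{equation*}
a(s)^2 \;=\; 4\mu \int_s^{R_G} a(u)\,\mathrm{d}u \;+\; 4\beta \int_s^{R_G} \bigl(G(u)-u\bigr)\,\mathrm{d}u.
\end{equation*}
Writing $I(s) := \int_s^{R_G}(G(u)-u)\,\mathrm{d}u$, the task is to show that the first term is $o(I(s))$ as $s \uparrow R_G$. Once this is done, $a(s)^2 \sim 4\beta I(s)$, and since $a < 0$ by \eqref{aneq0} we get $a(s) \sim -2\sqrt{\beta I(s)}$. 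Plugging this back into the formula for $f_x'(s)$ then yields exactly \eqref{asympfxcrit}.

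The only delicate point, and the one I expect to be the main obstacle, is the negligibility of the error term. The idea is a two-sided comparison. On the one hand, $G$ is strictly convex (which holds under the standing assumption $m>1$), so $u \mapsto G(u)-u$ is strictly increasing on $(1,R_G)$; hence for $s$ close enough to $R_G$ there is a constant $c > 0$ with $G(u)-u \geq c$ on $[s, R_G]$, giving the lower bound $I(s) \geq c(R_G - s)$. On the other hand, $a$ extends continuously to $R_G$ with $a(R_G)=0$, so given any $\eta > 0$ one can arrange $|a(u)| \leq c\eta/(1+|\mu|)$ on a small enough left-neighbourhood of $R_G$. Combining the two bounds yields $\bigl|\mu \int_s^{R_G} a(u)\,\mathrm{d}u\bigr| \leq c\eta(R_G-s) \leq \eta\, I(s)$, which is the desired negligibility. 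Everything else is substitution.
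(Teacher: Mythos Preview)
Your proof is correct and follows essentially the same route as the paper: both reduce to the asymptotic of $a(s)$ near $R_G$ by integrating \eqref{eqa} and showing the $\mu$-term is negligible compared to $\int_s^{R_G}(G(u)-u)\,\mathrm{d}u$, using $a(R_G)=0$ from Proposition~\ref{Propimpl2} and the lower bound $G(u)-u\geq c>0$ near $R_G$ (which indeed requires $R_G>1$; the paper makes this explicit). The only cosmetic difference is that you differentiate \eqref{linkomegQ} in $s$ to get $f_x'(s)=Q'(Q^{-1}(s)+x)/a(s)$ directly, whereas the paper phrases the same identity via the forward Kolmogorov equation $\partial_x f_x(s)=a(s)f_x'(s)$.
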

We cannot use the exact same arguments as in the proof of Lemma \ref{asmpomermuinfrg}  since $Q$ is not analytical at $x_0$ anymore.
\begin{proof}
Suppose that $\mu=\mu_c$ and fix $s<R_G$.
As a consequence of Proposition \ref{Propimpl2}, when $s$ tends to $R_G$, $-2\mu_c a(s)$ tends to $0$. Moreover, if $R_G=1$, then $\forall \mu \in \mathbb{R}, R(\mu)=R_G$ and therefore, by definition of $\mu_c$, $\mu_c=-\infty$. Here, since we have supposed $\mu_c=\mu\in\mathbb{R}$, $\mu_c>-\infty$ and thus $R_G>1$. Hence, we have that $2\beta\left(G(s)-s\right)\geq C$, where $C>0$, for $s$ in a neighbourhood of $R_G$. Therefore, \eqref{eqa} yields:
\begin{equation}\label{aprimcri2}
a^{\prime}(s)a(s)\underset{s\rightarrow R_G}{\sim}-2\beta\left(G(s)-s\right).
\end{equation}
We have that $\int_{0}^{R_G} G(u)\mathrm{d}u<\infty$, and therefore by integration:
\begin{equation}\label{aprimcri30}
-\frac{1}{2}a^2(s)\underset{s\rightarrow R_G}{\sim}-\int_{s}^{R_G}2\beta\left(G(u)-u\right)\mathrm{d}u.
\end{equation}
Since $a \leq 0$, \eqref{aprimcri30} implies that:
\begin{equation}\label{aprimcri3}
a(s)\underset{s\rightarrow R_G}{\sim}-2\sqrt{\beta \int_s^{R_G} (G(u)-u) \mathrm{d}u}.
\end{equation}
On the other hand, by diffrentiating \eqref{linkomegQ}, we obtain the following equation which is called the forward Kolmogorov equation (see for instance Section 3. chapter III of \cite{athreya1972branching} for more information in the general case)
\begin{equation}\label{kolfor}
\partial_x f_x(s)=a(s)f^{\prime}_x(s).
\end{equation}
Moreover, For $x$ fixed, \eqref{defa} yields:
\begin{equation}\label{partialxf}
\partial_x f_x(s)=Q^{\prime}(Q^{-1}(s)+x)\underset{s\rightarrow R_G}{\sim} Q^{\prime}(x_0+x).
\end{equation}
Combining \eqref{aprimcri3}, \eqref{kolfor} and \eqref{partialxf} we get the result.
\end{proof}
Observe that if $G(R_G)<\infty$, 
\begin{equation}
f^{\prime}_x(s)\underset{s\rightarrow R_G}{\sim}- \frac{Q^{\prime}(x+x_0)}{2\sqrt{\beta(R_G-s)(G(R_G)-R_G)}}.
\end{equation}
In this case, $f^{\prime}_x$ has the same kind of asymptotic near its radius of convergence as in the previous section and thus it is likely we will have the same kind of asymptotic for $q_i(x)$. However, the asymptotic of $f_x$ is, this time, only in the real sense. Nevertheless, a Tauberian theorem can be used to obtain a rougher description of the large $i$ behaviour of $q_i(x)$. In what follows, we consider a case a slightly more general than $G(R_G)<\infty$ by supposing that:
\begin{equation}\label{asmpGforthcritical}
G(s)-s\underset{s \rightarrow R_G}{\sim} C(R_G-s)^{-\alpha},
\end{equation}
where $C>0$, $\alpha \in [0,1)$ and $s\in(q,R_G)$. Since we are in the case $\mu_c<\infty$, we necessarily have that $\int_0^{R_G}G(s)\mathrm{d}s<+\infty$, which explains why $\alpha$ must be in $[0,1)$. Note that if $\alpha=0$, Equation \eqref{asmpGforthcritical} is equivalent to $G(R_G)<\infty$ and in this case $C=G(R_G)-R_G$, whereas if $\alpha>0$ we can replace $ G(s)-s$ by $G(s)$ in  \eqref{asmpGforthcritical}. 
\begin{proposition}\label{thasymptcritassump1}

If Equation \eqref{asmpGforthcritical} holds then:
\begin{equation}\label{qdeltamumucinftaub}
\sum_{i=n}^{+\infty}q_{i}(x)R_G^i\underset{n\rightarrow+\infty}{\sim}\frac{-AQ^{\prime}(x_0+x)}{n^{\frac{1+\alpha}{2}}},
\end{equation}
where $A:=\frac{\sqrt{(1-\alpha)R_G^{1+\alpha}}}{(1+\alpha)\sqrt{\beta C }\Gamma\left(\frac{1-\alpha}{2}\right)}$.  
\end{proposition}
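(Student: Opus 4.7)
The strategy is a straightforward real-variable Tauberian argument based on Lemma \ref{lemmafxprim} and the hypothesis \eqref{asmpGforthcritical}. The first step is to unpack the asymptotic of $f'_x$. Plugging $G(u)-u \sim C(R_G-u)^{-\alpha}$ into the integral gives
\[
\int_s^{R_G}(G(u)-u)\,\mathrm{d}u \underset{s\to R_G}{\sim} \frac{C}{1-\alpha}(R_G-s)^{1-\alpha},
\]
which via \eqref{asympfxcrit} yields
\[
f'_x(s) \underset{s\to R_G^-}{\sim} -Q'(x_0+x)\frac{\sqrt{1-\alpha}}{2\sqrt{\beta C}}\,(R_G-s)^{-(1-\alpha)/2}.
\]
Integrating from $s$ to $R_G$ (legitimate because $f_x(R_G)=Q(x_0+x)<\infty$ by \eqref{linkomegQ} and monotone convergence) one obtains
\[
f_x(R_G)-f_x(s) \underset{s\to R_G^-}{\sim} -Q'(x_0+x)\frac{\sqrt{1-\alpha}}{(1+\alpha)\sqrt{\beta C}}\,(R_G-s)^{(1+\alpha)/2}.
\]

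Next I would introduce the tail $T(n):=\sum_{i\ge n}q_i(x)R_G^i$, which is non-negative and decreasing in $n$, with $T(n)\to 0$. Swapping the order of summation yields
\[
\sum_{n\ge 0} T(n) z^n = \frac{f_x(R_G)-f_x(R_G z)}{1-z} + f_x(R_G z), \qquad z\in[0,1).
\]
The second term stays bounded as $z\to 1^-$, while the previous step applied with $s=R_G z$ shows
\[
\sum_{n\ge 0} T(n) z^n \underset{z\to 1^-}{\sim} -Q'(x_0+x)\frac{\sqrt{1-\alpha}\,R_G^{(1+\alpha)/2}}{(1+\alpha)\sqrt{\beta C}} \cdot \frac{1}{(1-z)^{(1-\alpha)/2}}.
\]

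Finally, since $(T(n))$ is a non-negative monotone sequence and its generating function has power-law blowup with exponent $\rho=(1-\alpha)/2\in(0,1/2]$, I would invoke the monotone form of Karamata's Tauberian theorem (Corollary 1.7.3 of Bingham--Goldie--Teugels, \textit{Regular Variation}), which converts the above equivalent into
\[
T(n) \underset{n\to\infty}{\sim} -Q'(x_0+x)\frac{\sqrt{1-\alpha}\,R_G^{(1+\alpha)/2}}{(1+\alpha)\sqrt{\beta C}\,\Gamma\!\left(\tfrac{1-\alpha}{2}\right)}\cdot n^{\rho-1}.
\]
Recognising that $n^{\rho-1}=n^{-(1+\alpha)/2}$ and that the prefactor equals $A$ yields precisely \eqref{qdeltamumucinftaub}.

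The proof is essentially a chain of three standard manipulations (integration of the derivative asymptotic, rearrangement of the tail generating function, and Karamata), and no step is genuinely delicate. The only real point of care is that the hypothesis \eqref{asmpGforthcritical} is a \emph{real}-variable asymptotic, which is why a Tauberian theorem rather than Flajolet's singularity analysis must be used; this forces us to work with the tail $T(n)$ (which is monotone) rather than $q_n(x)R_G^n$ itself (whose monotonicity is unclear), and to check that the positivity of $\rho$ is preserved, both of which hold here because $\alpha<1$.
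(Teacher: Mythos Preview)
Your proof is correct and rests on the same real-variable Tauberian idea as the paper, but you organise the argument differently. The paper applies Feller's Tauberian theorem directly to the power series $f'_x(sR_G)$, obtaining the asymptotic of the partial sums $V(n)=\sum_{i<n}(i+1)q_{i+1}(x)R_G^i$, and only afterwards converts to the tail $\sum_{i\ge n}q_i(x)R_G^i$ via an Abel-summation step. You instead integrate the derivative asymptotic first, form the generating function of the monotone tails $T(n)$ directly, and invoke the monotone-density form of Karamata from Bingham--Goldie--Teugels to land immediately on $T(n)$. Your route is slightly cleaner in that it bypasses the post-Tauberian manipulation; the paper's route has the minor advantage that it only uses the basic (non-monotone) form of the Tauberian theorem, since positivity alone suffices for the partial-sum conclusion, and monotonicity enters only in the elementary summation at the end. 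Either way, the same constant $A$ emerges, and your identification of why a Tauberian theorem (rather than singularity analysis) is forced here is exactly the point.
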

We can observe that if we give an equivalent of  $\sum_{i=n}^{+\infty}q_{i}(x)R_G^i$ when $\mu<\mu_c$ with the help of Theorem \ref{asymptotic}, we get the same result as in Proposition \ref{thasymptcritassump1} when $\alpha=0$.
\begin{proof}
Combining Equation \eqref{asympfxcrit} and \eqref{asmpGforthcritical} we get:
\begin{equation}\label{asympfxcrit2}
f^{\prime}_x(s)\underset{s\rightarrow R_G}{\sim}- \frac{Q^{\prime}(x+x_0)\sqrt{1-\alpha}}{2\sqrt{\beta C (R_G-s)^{1-\alpha}}}.
\end{equation}
Let us rescale $f^{\prime}_x$ by defining $b_x(s)=f_x^{\prime}(sR_G)$. Since the radius of convergence of $f_x$ is $R_G$, the radius of convergence of $b_x$ is $1$. By rescaling \eqref{asympfxcrit2} we obtain:
\begin{equation}\label{asympfxcritsc21}
b_x(s)\underset{s\rightarrow R_G}{\sim}- \frac{Q^{\prime}(x+x_0)\sqrt{1-\alpha}}{2\sqrt{\beta C R_G^{1-\alpha}(1-s)^{1-\alpha}}}.
\end{equation}
Using Theorem 5. Chapitre XIII Section 5 of \cite{feller1971introduction} we get:
\begin{equation}\label{asympfxcritsc3}
V(n):=\sum_{i=0}^{n-1}(i+1)q_{i+1}(x)R_G^i\underset{n \rightarrow +\infty}{\sim}- \frac{n^{\frac{1-\alpha}{2}}Q^{\prime}(x+x_0)}{\sqrt{1-\alpha}\Gamma(\frac{1-\alpha}{2})\sqrt{\beta C R_G^{1-\alpha}}}.
\end{equation}
The definition of $V$ yields:
\begin{equation}\label{qR_gcrit1}
q_i(x)R_G^i=\frac{(V(i)-V(i-1))R_G}{i}.
\end{equation}
We recall that $f_x(R(\mu))=f_x(R_G)<+\infty$. Therefore, by using \eqref{asympfxcritsc3} and \eqref{qR_gcrit1} and the fact that the terms of each of the series which follow are all positive we get:
\begin{align}
\sum_{i=n}^{\infty}q_i(x)R_G^i & \quad = \quad \quad \sum_{i=n}^{\infty}\frac{(V(i)-V(i-1))R_G}{i}  \nonumber\\
& \quad = \quad \quad \sum_{i=n}^{\infty}\frac{V(i)R_G}{i(i+1)}-\frac{V(n-1)R_G}{n}  \label{asymptotRGcrit2ndligne}\\
&\underset{n \rightarrow +\infty}{\sim}- \frac{Q^{\prime}(x+x_0)R_G}{\sqrt{1-\alpha}\Gamma(\frac{1-\alpha}{2})\sqrt{\beta C R_G^{1-\alpha}}}\left[\sum_{i=n}^{\infty}\frac{1}{i^{\frac{3+\alpha}{2}}}-\frac{1}{n^\frac{1+\alpha}{2}}\right]\nonumber\\
&\underset{n \rightarrow +\infty}{\sim}- \frac{AQ^{\prime}(x+x_0)}{n^{\frac{1+\alpha}{2}}}, \label{asympqR_gcrit2}
\end{align}
where $A:=\frac{\sqrt{(1-\alpha)R_G^{1+\alpha}}}{(1+\alpha)\sqrt{\beta C }\Gamma\left(\frac{1-\alpha}{2}\right)}$. Equation \eqref{asymptotRGcrit2ndligne} is obtained by integration by parts and Equation \eqref{asympqR_gcrit2} by a classical comparison between sums and integrals. 
\end{proof}

The influence of $G$ on the $q_i(x)$ seems difficult to understand when $\mu=\mu_c$. We will now see that in the case where $\mu>\mu_c$, there exists a stronger link between the $p_{i}$ and the $q_{i}(x)$. As a first step we give a result which do not require any specific knowledge on $G$.
\begin{proposition} Let $k \in \mathbb{N}$, if $\mu>\mu_c$ we have:
\begin{equation}
\sum_{i=0}^{\infty} q_i(x)R_G^i i^{k+2} <\infty \Leftrightarrow \sum_{i=0}^{\infty} p_iR_G^i i^{k}<\infty .
\end{equation}
\end{proposition}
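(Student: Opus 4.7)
The plan is to recast both moment conditions as statements about the finiteness of one-sided derivatives at the boundary of convergence, then chain these via the structural identity $f_x(s)=Q(Q^{-1}(s)+x)$ from \eqref{linkomegQ} and iterated differentiation of the KPP equation \eqref{KKPx}. As a preliminary, Abel's theorem for power series with non-negative coefficients (equivalently, monotone convergence applied to the termwise-differentiated series) gives that $\sum_{i} q_i(x) R_G^i i^{k+2}<\infty$ is equivalent to the real one-sided derivative $f_x^{(k+2)}(R_G^-)$ being finite, and similarly $\sum_i p_i R_G^i i^k<\infty$ is equivalent to $G^{(k)}(R_G^-)<\infty$.

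I would next reduce smoothness of $f_x$ at $R_G^-$ to smoothness of $Q$ at $x_0^+$. Since $\mu>\mu_c$, Proposition \ref{Propimpl2} yields $Q(x_0)=R_G$ and $Q^{\prime}(x_0)<0$, while Proposition \ref{propqprimeinfstrict0} combined with Proposition \ref{extension} guarantees that $Q$ is analytic in a neighbourhood of $x_0+x$ with $Q^{\prime}(x_0+x)<0$. Writing $f_x(s)=Q(Q^{-1}(s)+x)$ and applying Faà di Bruno, $f_x^{(k+2)}(R_G^-)$ is a polynomial in the values $Q^{(j)}(x_0+x)$ for $j\le k+2$ (all finite) and $(Q^{-1})^{(j)}(R_G^-)$ for $j\le k+2$, the top-order term being $Q^{\prime}(x_0+x)\,(Q^{-1})^{(k+2)}(R_G^-)$ with nonzero coefficient. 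An analogous Faà di Bruno expansion of the identity $s=Q(Q^{-1}(s))$, using crucially that $Q^{\prime}(x_0)\ne 0$, then shows inductively that $(Q^{-1})^{(k+2)}(R_G^-)$ is finite if and only if $Q^{(k+2)}(x_0^+)$ is.

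The final link is to relate smoothness of $Q$ at $x_0^+$ to smoothness of $G$ at $R_G^-$ through the KPP equation. The base case $k=0$ is immediate from \eqref{KKPx}, since $Q^{\prime\prime}(x_0^+)=-2\mu Q^{\prime}(x_0)-2\beta(G(R_G^-)-R_G)$ is finite iff $G(R_G^-)<\infty$. Inductively, differentiating \eqref{KKPx} $k$ times and expanding $\tfrac{d^k}{dx^k}G(Q(x))$ via Faà di Bruno shows that the only term involving $G^{(k)}$ evaluated at $x_0$ is $G^{(k)}(R_G)(Q^{\prime}(x_0))^k$; every other term involves $G^{(j)}(R_G^-)$ with $j<k$ and $Q^{(j)}(x_0^+)$ with $j\le k+1$, all finite by the inductive hypothesis. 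Since $Q^{\prime}(x_0)\neq 0$, this gives $Q^{(k+2)}(x_0^+)<\infty$ iff $G^{(k)}(R_G^-)<\infty$, and composing the three equivalences yields the proposition.

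The main technical obstacle is the combinatorial bookkeeping in the two applications of Faà di Bruno: one must verify at each order that the highest-order derivative of interest appears with a nonzero coefficient and that all lower-order quantities are controlled. This is precisely where the regime $\mu>\mu_c$ enters, via $Q^{\prime}(x_0)\neq 0$: in contrast to the case $\mu<\mu_c$ of Section 4, there is no square-root singularity to smooth out, and the finiteness of high moments of $Z_x$ is dictated \emph{transparently} by the finiteness of the corresponding moments of $L$, shifted by two because each particle absorbed at $-x$ costs one birth event governed by $G$ plus the two derivatives lost in passing from the KPP equation to $G$ itself.
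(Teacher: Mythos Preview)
Your argument is correct and complete; the key point, that $Q'(x_0)\neq 0$ when $\mu>\mu_c$, is used exactly where it must be, and the Faà di Bruno bookkeeping you outline does go through because at each stage the highest-order term appears with a nonzero coefficient.

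The paper, however, organizes the same induction differently. Rather than passing through $Q$, $Q^{-1}$ and the KPP equation in $x$, it works entirely in the variable $s$ via the function $a(s)=Q'(Q^{-1}(s))$. The first-order equation \eqref{eqa} for $a$ immediately gives $a'(R_G^-)<\infty\iff G(R_G^-)<\infty$, since $a(R_G)\neq 0$; and the combined forward/backward Kolmogorov identity $a(s)f_x'(s)=a(f_x(s))$, once differentiated repeatedly, transfers regularity of $a$ to regularity of $f_x$. This collapses your three-step chain (smoothness of $f_x$ $\leftrightarrow$ smoothness of $Q^{-1}$ $\leftrightarrow$ smoothness of $Q$ $\leftrightarrow$ smoothness of $G$) into two steps and replaces the two Faà di Bruno expansions by a single Leibniz-type differentiation of one scalar identity. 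Your route, on the other hand, makes the origin of the shift by two more visible: it comes precisely from the KPP equation being second order in $x$, whereas in the paper's presentation the ``$+2$'' is distributed between the passage from \eqref{eqa} (one derivative) and the Kolmogorov identity (another derivative). Both arguments rest on the same non-degeneracy $Q'(x_0)\neq 0$ and are equally rigorous.
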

\begin{proof}
We begin by prove this result for $k=0$. First suppose that 
\begin{equation}\label{assumpG1}
G(R_G)=\sum_{i=0}^{\infty} p_iR_G^i <+\infty.
\end{equation}
We recall from \eqref{eqa} that: 
$$
a^{\prime}(s)a(s)=-2\mu a(s)-2\beta\left(G(s)-s\right) \forall s\in (q,Q(x_0)).
$$
Since $\lim_{s\rightarrow R_G} a(s)\in \mathbb{R}^*$,  Equations \eqref{defa} and \eqref{assumpG1} imply that $\lim_{s\rightarrow R_G} a^{\prime}(s)<\infty$. As above, by using Kolmogorov equations we get:
\begin{align}
a(s)f^{\prime}_x(s)&=a(f_x(s)) \label{doubleKolsurcrit} \\
a^{\prime}(s)f^{\prime}_x(s)+a(s)f^{\prime \prime}_x(s)&=f^{\prime}_x(s)a^{\prime}(f_x(s))\label{doubleKolsurcrit2}.
\end{align}
Equation \eqref{doubleKolsurcrit} implies that $\lim_{s\rightarrow R_G} f^{\prime}_x(s)<\infty$ and \eqref{doubleKolsurcrit2} similarly implies that $\lim_{s\rightarrow R_G} f^{\prime \prime}_x(s)<\infty$. The coefficients of the power series $f^{\prime \prime}_x(s)$ are positive, therefore $f^{\prime \prime}_x(R_G)<\infty$, which is equivalent to 
\begin{equation}
\sum_{i=0}^{\infty} q_i(x)R_G^i i^{2} <\infty.
\end{equation}
If we now suppose that for $k\in \mathbb{N}$, $\sum_{i=0}^{\infty} p_iR_G^i i^{k}=\infty$, we can  similarly prove that 
\begin{equation}
\sum_{i=0}^{\infty} q_i(x)R_G^i i^{k+2} =\infty.
\end{equation}
The general case can be proved by induction. The proof is almost identical to the case $k=0$, we differentiate \eqref{doubleKolsurcrit} $k+1$ times, and use the induction hypothesis to determinate what is finite or not.
\end{proof}
This result is pretty weak, but informally it shows that a link exists between $p_i$ and $q_{i}(x)i^2$. Once again, with more specific assumptions on $G$ we can give a more accurate result on $q_i(x)$ which confirms this link.
\begin{proposition}
Let $m\in \mathbb{N}$ and suppose that $G^{(m)}(s)\underset{s\rightarrow R_G}{\sim} C(R_G-s)^{-\alpha}$, with $C>0$ , $\alpha\in(0,m+1)$ and $s\in (q,R_G)$. Suppose that $\mu>\mu_c$, then for $t\in \mathbb{R}$ such that $t<m+2-\alpha$, we have:
\begin{align}
\sum_{i=n}^{+\infty}q_{i}(x)R_G^i i^t &\underset{n\rightarrow+\infty}{\sim}\frac{K_1Q^{\prime}(x+x_0)}{n^{m+2-t-\alpha}},  \nonumber \\
&\underset{n\rightarrow+\infty}{\sim}K_2 Q^{\prime}(x+x_0)\sum_{i=n}^{+\infty}p_iR_G^i i^{t-2},
\end{align}
where $K_1=\frac{2\beta C R^{m+2-\alpha}_G}{\Gamma(\alpha)(m+2-t-\alpha)(Q^{\prime})^3(x_0)}$ and $K_2=\frac{2\beta R^2_G}{(Q^{\prime})^3(x_0)}$.
\end{proposition}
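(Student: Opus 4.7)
The plan follows the template of Proposition \ref{thasymptcritassump1}: extract a real-axis asymptotic for a sufficiently high derivative of $f_x$ at its singularity $R_G$, then apply Feller's Tauberian theorem (Theorem 5, Ch.~XIII \S5 of \cite{feller1971introduction}) and finish with summation by parts. The starting point is the identity $a(s)f_x^{\prime}(s)=a(f_x(s))$, obtained by combining \eqref{kolfor} with \eqref{linkomegQ}. Since $\mu>\mu_c$, Proposition \ref{Propimpl2} gives $a(R_G)=Q^{\prime}(x_0)\neq 0$, while $f_x(R_G)=Q(x_0+x)<R_G$ keeps every factor $a^{(k)}(f_x(s))$ away from the singularity as $s\uparrow R_G$.

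I first analyse $a^{(m+1)}(s)$. Differentiating \eqref{eqa} exactly $m$ times via Leibniz and solving for the top-order derivative puts $-2\beta G^{(m)}(s)/a(s)$ forward as the candidate leading term. A short induction on $k\le m$, which integrates the asymptotic $G^{(m)}(s)\sim C(R_G-s)^{-\alpha}$ to bound each $G^{(k)}$, shows that $a^{(k)}(s)$ has singularity at most $(R_G-s)^{-(\alpha-m-1+k)_+}$; every cross-product $a^{(j+1)}(s)a^{(m-j)}(s)$ has exponent $2\alpha-m-1<\alpha$ since $\alpha<m+1$. Dividing by $a(s)\to Q^{\prime}(x_0)$ therefore yields
\[
a^{(m+1)}(s)\underset{s\uparrow R_G}{\sim}-\frac{2\beta C}{Q^{\prime}(x_0)}(R_G-s)^{-\alpha}.
\]
Now differentiate $a(s)f_x^{\prime}(s)=a(f_x(s))$ exactly $m+1$ times (Leibniz on the left, Fa\`a di Bruno on the right). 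On the right-hand side each summand is a product of an $a^{(k)}(f_x(s))$ (bounded, since $f_x(R_G)<R_G$) and powers of $f_x^{(j)}(s)$, and the same exponent accounting shows it stays bounded as $s\to R_G$. On the left every term $a^{(j)}(s)f_x^{(m+2-j)}(s)$ with $1\le j\le m$ carries a strictly weaker singularity than $(R_G-s)^{-\alpha}$. The only surviving balance is $a(s)f_x^{(m+2)}(s)\sim -a^{(m+1)}(s)f_x^{\prime}(s)$, and using $f_x^{\prime}(s)\to Q^{\prime}(x_0+x)/Q^{\prime}(x_0)$ this gives
\[
f_x^{(m+2)}(s)\underset{s\uparrow R_G}{\sim}\frac{2\beta C\,Q^{\prime}(x_0+x)}{Q^{\prime}(x_0)^3}(R_G-s)^{-\alpha}.
\]

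The Tauberian step is now routine. Rescaling $b_x(u):=f_x^{(m+2)}(R_Gu)$ to radius of convergence $1$ and applying Feller's theorem gives
\[
\sum_{i=0}^{n-1}q_i(x)R_G^i\frac{i!}{(i-m-2)!}\underset{n\to+\infty}{\sim}\frac{2\beta C\,Q^{\prime}(x_0+x)R_G^{m+2-\alpha}}{Q^{\prime}(x_0)^3\,\Gamma(\alpha+1)}\,n^{\alpha}.
\]
Summation by parts exactly as in \eqref{asymptotRGcrit2ndligne}, legitimate because $t<m+2-\alpha$ makes the boundary term vanish, then delivers the first equivalent with the prescribed constant $K_1$. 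For the second equivalent, the same Feller plus Abel argument applied directly to the assumption $G^{(m)}(s)\sim C(R_G-s)^{-\alpha}$ produces
\[
\sum_{i=n}^{+\infty}p_iR_G^i i^{t-2}\underset{n\to+\infty}{\sim}\frac{C\,R_G^{m-\alpha}}{(m+2-t-\alpha)\Gamma(\alpha)}\,n^{-(m+2-t-\alpha)},
\]
and the ratio of the two asymptotics extracts $K_2=\frac{2\beta R_G^2}{Q^{\prime}(x_0)^3}$.

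The main obstacle lies in the first step: one has to verify that in the differentiated ODE for $a$ and in the $(m+1)$-fold Fa\`a di Bruno expansion for $a\circ f_x$, every term other than the top one is genuinely of singular order strictly smaller than $\alpha$. This rests on the inequality $\alpha<m+1$, which supplies exactly the slack needed to dominate every combinatorial product by $(R_G-s)^{-\alpha}$; once this bookkeeping is in place, the Tauberian finish is parallel to the proof of Proposition \ref{thasymptcritassump1}.
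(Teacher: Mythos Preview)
Your approach is the paper's: differentiate \eqref{eqa} to pin down the blow-up of $a^{(m+1)}$, propagate to $f_x^{(m+2)}$, then run the Tauberian argument of Proposition~\ref{thasymptcritassump1}. The paper reaches $f_x^{(m+2)}$ by differentiating \eqref{linkomegQ} directly (obtaining \eqref{linkomegafxseconde} and then differentiating $m$ more times), whereas you differentiate the equivalent Kolmogorov identity $a(s)f_x'(s)=a(f_x(s))$; a line of algebra shows the two computations coincide. Your order-tracking for $a^{(k)}$, giving singular exponent $(\alpha-m-1+k)_+$, is in fact more accurate than the paper's assertion that $a^{(k)}(R_G^-)$ is finite for all $k\le m$, which fails once $\alpha>1$ although the paper's conclusion \eqref{aderivmplusone} survives.

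One point to tighten: the right-hand side of your $(m+1)$-times-differentiated identity does \emph{not} in general stay bounded. The Fa\`a di Bruno polynomial carries factors $f_x^{(j)}(s)$, and for $\alpha>1$ these can blow up (already $f_x''$ inherits the $(\alpha-m)_+$-order singularity of $a'$). What you actually need, and what your closing paragraph correctly isolates as the crux, is that every such term has singular exponent strictly below $\alpha$; this requires a parallel induction establishing $f_x^{(j)}=O\bigl((R_G-s)^{-(\alpha-m-2+j)_+}\bigr)$ for $j\le m+1$, which you invoke only implicitly under ``the same exponent accounting''. The paper's route through \eqref{linkomegafxseconde} sidesteps this extra induction because the factors $Q^{(k)}(Q^{-1}(s)+x)$ are evaluated at the interior point $x_0+x$ and are manifestly bounded, leaving the $a^{(k)}(s)$ as the sole singular ingredients. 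With your $f_x^{(j)}$-induction stated explicitly (or by switching to the paper's identity), the argument is complete.
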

We recall that $\int_0^{R_G} G(s)\mathrm{d}s<+\infty$ in this case, which explains that $\alpha<m+1$. 
\begin{proof}
Since the proof of this proposition is very close to that of Proposition \ref{thasymptcritassump1} we will skip some details. For $0 \leq k\leq m$ and $s\in (q,R_G)$ we get by differentiating $k$ times \eqref{eqa} that:
\begin{equation}
\sum_{i=0}^k \binom{k}{i} a^{(i+1)}(s)a^{(k-i)}(s)=-2\mu a^{(k)}(s)-2\beta\left(G^{(k)}(s)-\phi^{(k)}(s)\right), \label{differaktimes}
\end{equation}
where $\phi(x)=x$. Thanks to \eqref{differaktimes}, we can show by induction that for all $k\leq m$, $\lim_{s \rightarrow R_G} a^{(k)}(s)<+\infty$ and thus that:
\begin{equation}
a^{(m+1)}(s)Q^{\prime}(x_0)\underset{s\rightarrow R_G}{\sim} -\frac{2C\beta}{(R_G-s)^{\alpha}}. \label{aderivmplusone}
\end{equation}
By differentiating $2$ times Equation \eqref{linkomegQ} we get for $q<s<R_G$:
\begin{equation}
f_x^{\prime \prime}(s)=\frac{Q^{\prime \prime}(Q^{-1}(s)+x)-a^{\prime}(s)Q^{\prime}(Q^{-1}(s)+x)}{a^2(s)}. \label{linkomegafxseconde}
\end{equation}
If we differentiate again $m$ times Equation \eqref{linkomegafxseconde}, we can show that
\begin{equation}
f_x^{(m+2)}(s)\underset{s\rightarrow R_G}{\sim} \frac{-a^{(m+1)}(s)Q^{\prime}(x_0+x)}{(Q^{\prime})^2(x_0)},\label{fxmplus2one}
\end{equation}
since the others terms involve at most the $m$-th derivative of $a$ and thus are finite near $R_G$. Combining \eqref{aderivmplusone} and \eqref{fxmplus2one}, we obtain:
\begin{equation}
f_x^{(m+2)}(s)\underset{s\rightarrow R_G}{\sim} \frac{2C\beta Q^{\prime}(x_0+x)}{(Q^{\prime})^3(x_0)(R_G-s)^{\alpha}}.\label{fxmplus2two}
\end{equation}
We conclude by using the Tauberian Theorem as in the proof of Proposition \ref{thasymptcritassump1}.
\end{proof}
We recall that the probability to have $i$ particles on the barrier at the extinction time and only one split before the extinction calculated in \eqref{dxzxzetax} is of order $p_i/i^2$. This fact and the two previous propositions lead us to think that when $\mu>\mu_c$, there is also a change of regime for the number of divisions $D_x$ before extinction and we can conjecture, for instance, that the radius of convergence of the generating function (on the extinction event) of $D_x$ is infinite. In any case, the law of $D_x$ should cast light on the change of behaviour of $Z_x$ when $\mu>\mu_c$ or when $\mu=\mu_c$. Unfortunately, the study of $D_x$ seems much more difficult to that of $Z_x$, since its generating function does not solve simple equations.

\begin{appendix}
\section{Appendix}
\subsection{Proof of Lemma \ref{continuityR}}
\begin{proof}
Let $\mu_1\in \mathbb{R}$ such that $R(\mu_1) \in (1,R_G)$. Roughly speaking, to prove the continuity of $R$ in $\mu_1$ we will consider a domain around the trajectory of $X(\cdot ,\mu_1)$ (defined in \eqref{defX}) narrow enough such that for $\mu$ close enough to $\mu_1$, the trajectory of $X(\cdot ,\mu)$ is in this domain and cut the x axis near $Q(x_0,\mu_1)$. \\
We have proved that there exists $x_0 \in \mathbb{R}^-$ such that $X(x_0,\mu_1)=(R(\mu_1),0)$ and $Q$ is decreasing on $(x_0,+\infty)$. Furthermore, if we define $x_l$ as in Proposition $\ref{extension}$ and if $R(\mu_1)<R_G$, we necessarily have that $x_0>x_l$. Indeed, since in this case $Q^{\prime}(x_0)=0$ and $Q(x_0)<R_G$, Equation $\eqref{boundxl}$ implies that $x_0\neq x_l$. \\
Let $0<\epsilon<R_G-R(\mu_1)$, there exists $0<\eta_0<x_l-x_0$ such that:
\begin{equation}
 |Q(x,\mu_1)-R(\mu_1)|+|Q^{\prime}(x,\mu_1)|<\epsilon, \; \forall x \in (x_0-\eta_0,x_0+\eta_0) \label{voiseta0}
 \end{equation}
 and
 \begin{equation}
Q^{\prime}(x,\mu_1)\neq 0, \; \forall x \in (x_0-\eta_0,x_0+\eta_0) \setminus \lbrace x_0 \rbrace .\label{voiseta02}
 \end{equation}
Equation \eqref{voiseta0} comes from the continuity of $X$ with respect to $x$ and \eqref{voiseta02} is a consequence of Cauchy-Lipschitz Theorem.
Let $(x_1,x_2) \in (x_0,x_0+\eta_0)  \times (x_0-\eta_0,x_0)$. Since the function $Q^{\prime}$ is negative and continuous on $[x_2,0]$, there exists $\epsilon_2>0$ such that $Q^{\prime}(x)<-\epsilon_2, \forall x \in [x_2,0]$. We now  define $D$ the domain of $\mathbb{R}^2$:
\begin{equation}
D=\bigcup_{x\in [0,x_1]} B(X(x,\mu_1),\min \lbrace \epsilon, \epsilon_2 \rbrace),
\end{equation}
where $B(x,r)$ is the open ball for the norm $||\cdot ||_1$ of radius $r$ centered at $x$.
Let $I_0$ be an open bounded interval such that $\mu_1 \in I_0 \subset \left(-\mu_0,+\infty\right)$ and for $\xi=(\xi_1,\xi_2)\in \mathbb{R}^2$, $\psi(\cdot ,\xi,\mu)$ is defined as the maximal solution of $\eqref{deff}$ such that $\psi(0,\xi,\mu)=\xi$. The function $\Gamma$, defined in \eqref{deff}, is continuous and uniformly Lipschitz with respect to $((x,y),\mu)$ on $D\times I$. Therefore by Theorem 7.4 Section 7 Chapter 1 of \cite{coddington1955theory}, there exists $\delta>0$ such that if $(\xi,\mu) \in V$, where $V$ is defined by:
\begin{equation}\label{neighbourinitKPPpar}
V=\lbrace \left(\xi=(\xi_1,\xi_2),\mu\right), |\xi_1-Q(0,\mu_1)|+|\xi_2-Q^{\prime}(0,\mu)|+|\mu-\mu_1|<\delta \rbrace
\end{equation}
then for all $x\in [0,x_1]$, $\psi(x,\xi,\mu)$ is defined and $\psi(x,\xi,\mu)\in D$. Moreover, $\psi$ is continuous in $[0,x_1] \times V$. We have shown in Proposition \ref{lemcontinuQprilme0} that $\mu \mapsto (Q(0,\mu),Q^{\prime}(0,\mu))$ is continuous. Therefore, there exists $\eta_1>0$ such that if $\mu\in I_0$ satisfies $|\mu-\mu_1|<\eta_1$ then $(Q(0,\mu),Q^{\prime}(0,\mu),\mu) \in V$. Since $(Q(x,\mu),Q^{\prime}(x,\mu))=\psi(x,(Q(0,\mu)),Q^{\prime}(0,\mu)),\mu)$, we have that $X$ is continuous on $V_2:=[0,x_1]\times (\mu_1-\eta_1,\mu_1+\eta_1)$ and $X(V_2) \subset D$. \\
As an easy consequence of the fact that $Q^{\prime \prime}(x_0,\mu_1)<0$ and of \eqref{voiseta02} we have $Q^{\prime}(x_1,\mu_1)>0$. 
Therefore, by continuity of $X$, there exists $\eta_2<\eta_1$ such that if $|\mu-\mu_1|<\eta_2$ then $Q^{\prime}(x_1,\mu)>0$ and $Q^{\prime}(x_2,\mu)<0$. Hence, by the Intermediate Value Theorem, there exists $x_3\in (x_1,x_2)$ such that $Q^{\prime}(x_3,\mu)=0$. Furthermore, by the definition of $D$ and by \eqref{voiseta02}, $ Q^{\prime}(x ,\mu)\neq 0$, $\forall x \in (x_3,0)$ and thus $Q(x_3,\mu)=R(\mu)$. Finally, by \eqref{voiseta0}, we have $|R(\mu)-R(\mu_1)|<\epsilon$. Therefore, $R$ is continuous on $R^{-1}\left[(1,R_G)\right]$.
\end{proof}
\subsection{Proof of Lemma \ref{lemma2annexe}}
\begin{proof}
The uniqueness comes from the increase of $\phi$ described in Proposition \ref{nocut}. Furthermore, we cannot have $R(\mu_c)<R_G$. Indeed, the proof of Lemma \ref{continuityR}, implies that if $R(\mu_c)<R_G$ there exists $\mu_1$ such that $R(\mu_c)<R(\mu_1)<R_G$, which is in contradiction with the definition of $\mu_c$. 

We want now to prove that $Q^{\prime}(x_0,\mu_c)=0$ and that $R$ is continuous at $\mu_c$. Since the proof of these two points are very similar to that from Lemma $\ref{continuityR}$, we will skip some details. However, we cannot as in this lemma chose a domain which contains $(R_G,Q^{\prime}(x_0,\mu_c))$. Indeed, on such a domain, we would not have necessarily the Lipschitz condition because of the singularity of $G$ in $R_G$. We will thus take a slightly different one.  Let $\epsilon_1>0$.  We can take $x_1>x_0$ such that: $||X(x_1,\mu_c)-X(x_0,\mu_c)||_1<\epsilon_1$ and define $D$ by:
\begin{equation}
D=\bigcup_{x\in [0,x_1]} B(X(x,\mu_c),\epsilon_2 ),
\end{equation}
where $0<\epsilon_2<\epsilon_1$ is small enough to have that $d(D,\mathbb{R}\times \lbrace 0 \rbrace \cup \lbrace R_G \rbrace\times \mathbb{R})>0$. There exists $\eta>0$, such that for all $\mu$ satisfying $|\mu-\mu_c|<\eta$, we have $||X(x_1,\mu_c)-X(x_1,\mu)||_1<\epsilon_2$, and $X(x,\mu) \in D$, $\forall x \in [0,x_1]$. This implies in particular that for $|\mu-\mu_c|<\eta$, $R(\mu)\geq R_G-2 \epsilon_2$. Furthermore, we know that $R(\mu)\leq R_G$ and thus $R$ is continuous at $\mu_c$. \\
 Suppose now that we have $Q^{\prime}(x_0,\mu_c)<0$ and $\epsilon_1<|Q^{\prime}(x_0,\mu_c)|/4$. Let $s_1=Q(x_1,\mu_c)$ and choose $\mu$ and $s$ such that $|\mu_c-\mu|<\eta \wedge |\mu_c|$ and $s_1<s<R(\mu)$. Equation \eqref{eqa} yields:
\begin{align}
a^{\prime}(s,\mu)a(s,\mu)&=-2\mu a(s,\mu)-2\beta\left(G(s)-s\right) \nonumber\\
\frac{1}{2}(a^2(s,\mu)-a^2(s_1,\mu))&=-2\mu \int_{s_1}^s a(u,\mu)\mathrm{d}u-2\beta\int_{s_1}^s\left(G(u)-u\right)\mathrm{d}u \nonumber\\
|a(s,\mu)-a(s_1,\mu)|&\leq \frac{4}{|a(s_1,\mu)|} \left[ |\mu|(R_G-s_1)\sup_{x\in [x_0,0]}|Q^{\prime}(x,\mu_c)|\right.\nonumber\\
&\qquad \qquad \qquad \qquad \left.+\beta\int_{s_1}^{R_G}\left(G(u)-u\right)\mathrm{d}u\right]. \nonumber\\
|a(s,\mu)-a(s_1,\mu)|&\leq \frac{8}{|Q^{\prime}(x_0,\mu_c)|} \left[2 |\mu_c|(R_G-s_1)\sup_{x\in [x_0,0]}|Q^{\prime}(x,\mu_c)|\right.\nonumber\\
&\qquad \qquad \qquad \qquad \left.+\beta\int_{s_1}^{R_G}\left(G(u)-u\right)\mathrm{d}u\right].\label{derniereingermu}
\end{align}
Let $0<\epsilon<|Q^{\prime}(x_0,\mu_c)|/2$, by choosing $\epsilon_1$ small enough we have for all $\mu$ such that $|\mu_c-\mu|<\eta \wedge |\mu_c|$,  $|a(s_1,\mu)-a(R_G,\mu_c)|<\epsilon/2$ and thanks to \eqref{derniereingermu} that $|a(s,\mu)-a(s_1,\mu)|<\epsilon/2$ which implies that: $|a(s,\mu)|>|Q^{\prime}(x_0,\mu_c)|/2$ for all $s<R(\mu)$ and thus $R(\mu)=R_G$. If $\mu<\mu_c$ we then get a contradiction with the definition of $\mu_c$. Therefore $Q^{\prime}(x_0,\mu_c)=0$.
\end{proof}

\end{appendix}
\section*{Acknowledgements}
I am very grateful to my thesis advisor, Julien Berestycki, for his help throughout the writing of this article. 
 \bibliographystyle{plain}
 \bibliography{Biblioparticle}

\begin{thebibliography}{10}

\bibitem{addario2011total}
Luigi Addario-Berry and Nicolas Broutin.
\newblock Total progeny in killed branching random walk.
\newblock {\em Probability theory and related fields}, 151(1-2):265--295, 2011.

\bibitem{aidekon2013precise}
Elie A{\"\i}d{\'e}kon, Yueyun Hu, and Olivier Zindy.
\newblock The precise tail behavior of the total progeny of a killed branching
  random walk.
\newblock {\em The Annals of Probability}, 41(6):3786--3878, 2013.

\bibitem{Aldous}
David Aldous.
\newblock Power laws and killed branching random walk.
\newblock \url{http://www.stat.berkeley.edu/~aldous/Research/OP/brw.html}.

\bibitem{athreya1972branching}
Krishna~B. Athreya and Peter~E. Ney.
\newblock {\em Branching processes}.
\newblock Springer, 1972.

\bibitem{berestycki2015branching}
Julien Berestycki, {\'E}ric Brunet, Simon~C. Harris, and Piotr Mi{\l}o{\'s}.
\newblock Branching {B}rownian motion with absorption and the all-time minimum
  of branching {B}rownian motion with drift.
\newblock {\em arXiv preprint arXiv:1506.01429}, 2015.

\bibitem{borodin2002handbook}
Andrei~N. Borodin and Paavo Salminen.
\newblock {\em Handbook of {B}rownian Motion: Facts and Formulae}.
\newblock Springer Science \& Business Media, 2002.

\bibitem{coddington1955theory}
Earl~A. Coddington and Norman Levinson.
\newblock {\em Theory of ordinary differential equations}.
\newblock McGraw-Hill Education, 1955.

\bibitem{feller1971introduction}
William Feller.
\newblock An introduction to probability and its applications, vol. {II}.
\newblock {\em Wiley, New York}, 1971.

\bibitem{flajolet2009analytic}
Philippe Flajolet and Robert Sedgewick.
\newblock {\em Analytic combinatorics}.
\newblock Cambridge University Press, 2009.

\bibitem{hardy2006new}
Robert Hardy and Simon~C. Harris.
\newblock A new formulation of the spine approach to branching diffusions.
\newblock {\em arXiv preprint math/0611054}, 2006.

\bibitem{hardy2009spine}
Robert Hardy and Simon~C. Harris.
\newblock A spine approach to branching diffusions with applications to l
  p-convergence of martingales.
\newblock In {\em S{\'e}minaire de Probabilit{\'e}s XLII}, volume 1979, pages
  281--330. Springer, 2009.

\bibitem{harris2006further}
John~W. Harris, Simon~C. Harris, and Andreas~E. Kyprianou.
\newblock Further probabilistic analysis of the
  {F}isher-{K}olmogorov-{P}etrovskii-{P}iscounov equation: one sided
  travelling-waves.
\newblock {\em Annales de l'Institut Henri Poincare (B) Probability and
  Statistics}, 42(1):125--145, 2006.

\bibitem{hille1962analytic}
Einar Hille.
\newblock Analytic function theory, vol. {II}, {G}inn and {C}o.
\newblock {\em New York}, 1962.

\bibitem{ince1927ordinary}
Edward~L. Ince.
\newblock Ordinary differential equations.
\newblock {\em Dover Publications, New York}, 122, 1944.

\bibitem{kesten1978branching}
Harry Kesten.
\newblock Branching {B}rownian motion with absorption.
\newblock {\em Stochastic Processes and their Applications}, 7(1):9--47, 1978.

\bibitem{lang2013complex}
Serge Lang.
\newblock {\em Complex analysis}, volume 103.
\newblock Springer Science \& Business Media, 2013.

\bibitem{maillard2013number}
Pascal Maillard.
\newblock The number of absorbed individuals in branching {B}rownian motion
  with a barrier.
\newblock {\em Annales de l'{I}nstitut {H}enri {P}oincar{\'e}, Probabilit{\'e}s
  et Statistiques}, 49(2):428--455, 2013.

\bibitem{neveu1988multiplicative}
Jacques Neveu.
\newblock Multiplicative martingales for spatial branching processes.
\newblock In {\em Seminar on Stochastic Processes, 1987}, pages 223--242.
  Springer, 1988.

\bibitem{rudin1987real}
Walter Rudin.
\newblock {\em Real and complex analysis}.
\newblock McGraw-Hill Education, 1987.

\end{thebibliography}
\end{document}